\newcommand{\Om}{\Omega}
\newcommand{\pa}{\partial}
\newcommand{\ov}{\overline}
\newcommand{\I}{{\rm Im}}
\newcommand{\Rt}{{\rm Re}}
\newcommand{\wid}{\widetilde}
\newcommand{\na}{\nabla}
\newcommand{\mat}{\mathbb}
\newcommand{\R}{{\mat R}}
\newcommand{\Z}{{\mat Z}}
\newcommand{\N}{{\mat N}}
\newcommand{\Sp}{{\mat S}}
\newcommand{\be}{\begin{eqnarray}}
\newcommand{\ben}{\begin{eqnarray*}}
\newcommand{\en}{\end{eqnarray}}
\newcommand{\enn}{\end{eqnarray*}}
\newtheorem{remark}[theorem]{Remark}
\begin{document}
\renewcommand{\theequation}{\arabic{section}.\arabic{equation}}

\title{\bf The obstalce scattering for the biharmonic equation}
\author{Chengyu Wu\thanks{School of Mathematics and Statistics, Xi'an Jiaotong University,
Xi'an 710049, Shaanxi, China ({\tt wucy99@stu.xjtu.edu.cn})}
\and
Jiaqing Yang\thanks{School of Mathematics and Statistics, Xi'an Jiaotong University,
Xi'an 710049, Shaanxi, China ({\tt jiaq.yang@xjtu.edu.cn})}
}
\date{}
\maketitle


\begin{abstract}
  In this paper, we consider the obstacle scattering problem for biharmonic equations with a Dirichlet boundary condition in both two and three dimensions. Some basic properties are first derived for the biharmonic scattering solutions, which leads to a simple criterion for the uniqueness of the direct problem. Then a new type far-field pattern is introduced, where the correspondence between the far-field pattern and scattered field is established. Based on these properties, we prove the well-posedness of the direct problem in associated function spaces by utilizing the boundary integral equation method, which relys on a natural decomposition of the biharmonic operator and the theory of the pseudodifferential operator. Furthermore, the inverse problem for determining the obstacle is studied. By establishing some novel reciprocity relations between the far-field pattern and scattered field, we show that the obstacle can be uniquely recovered from the measurements at a fixed frequency. 
\end{abstract}

\begin{keywords}
biharmonic scattering, far-field pattern, boundary integral equations, well-posedness, reciprocity relation, inverse problem. 
\end{keywords}

\begin{AMS}
65N38, 47A40, 78A46. 
\end{AMS}

\pagestyle{myheadings}
\thispagestyle{plain}
\markboth{C. Wu and J. Yang}{Direct and inverse biharmonic obstacle scattering problem}

\section{Introduction}\label{sec1}
\setcounter{equation}{0}
The biharmonic scattering problems have important applications in various scientific fields and thus have arisen more interests in recent years. They play a significant part in the study of elasticity and the theory of vibration of beams, such as the beam equation \cite{FHG10}, the hinged plate configurations \cite{FHG10}, and the Stokes equation \cite{GW08}, and the scattering by grating stacks \cite{NR09}. 

The present paper concerns the direct and inverse biharmonic scattering problems by impenetrable obstacles. Denote by $\Om\in C^{3,\alpha}$ a bounded domain in $\R^d$ $(d=2,3)$ with connected complement. Consider the following biharmonic obstacle scattering problem 
\be\label{1.1}
\left\{
\begin{array}{ll}
	\Delta^2 u-k^4u=0~~~&{\rm in}~\R^d\setminus\ov\Om,\\ [2mm]
	\mathcal{B}(u)=(0,0)~~~&{\rm on}~\pa\Om, 
\end{array}
\right.
\en
where $k>0$ is the wave number, $u=u^i+u^s$ denotes the total field in $\R^d\setminus\ov\Om$ with $u^i$ the incident wave and $u^s$ the scattered wave, and $\mathcal{B}$ stands for the boundary condition on $\pa\Om$. There are various boundary conditions for the biharmonic scattering (cf. \cite{LC20,FHG10,GW08}), such as the Dirichlet condition $\mathcal{B}_D(u)=(u,\pa_nu)$ with $n$ the unit exterior normal on $\pa\Om$, the Navier condition $\mathcal{B}_N(u)=(u,\Delta u)$, the Neumann condition $\mathcal{B}(u)=(\Delta u,\pa_n\Delta u)$ (this is actually a special case of the Neumann condition) and $\mathcal{B}(u)=(u,\pa_n\Delta u),(\pa_nu,\Delta u),(\pa_nu,\pa_n\Delta u)$. In this paper, we mainly consider the Dirichlet condition $\mathcal{B}=\mathcal{B}_D$. To ensure the well-posedness of problem (\ref{1.1}), we impose an analogue of the classical Sommerfeld radiation condition (cf. \cite{TV18,HP23,PXY21}), i.e., 
\be\label{1.2}
  \displaystyle\pa_rw-ikw=o\left(r^{-\frac{d-1}{2}}\right),~~r=|x|\rightarrow\infty,~w=u^s,\Delta u^s, 
\en
uniformly in all directions $\hat x=x/|x|$. In this work, we will define a new type far-field pattern for the biharmonic scattering and study the inverse problem of uniquely determining the obstacle $\Om$ from the knowledge of the scattered fields or far-field patterns at a fixed frequency. 

Comparing to the scattering problems in the acoustic, elastic and electromagnetic cases, the biharmonic scattering problems are much less studied till now. The high order of the differential operator bring numerous difficulties in the reserach and many classical methods do not work any longer. For the direct biharmonic obstacle scattering problems, we refer to \cite{LC20} where the Dirichlet-to-Neumann map for biharmonic scattering was defined in two dimensions and the corresponding Fredholm properties were derived, which yielded an equivalent variational formulation and the well-posedness of biharmonic scattering with Dirichlet condition and other boundary conditions followed. Besides the variational method, the boundary integral equation method is also an usual method to derive the well-posedness in the scattering theory. For the biharmonic scattering, the boundary integral equation method was considered in \cite{HP23}, where the well-posedness of the Dirichlet problem was obtained and relating convergence analysis was carried out. However, the results in \cite{HP23} was only obtained in two dimensions with the boundary $\pa\Om$ required to be analytic and parametrizable, which is a rather strong restriction. As for the inverse scattering by biharmonic obstacles, in \cite{LA20}, the unique recovery of a Dirichlet obstacle was obtained by measuring the scattered field with the incident point source at a circle. Moreover, the linear sampling method was then justified. Here we further refer some papers concerning other types problems in the biharmonic scattering. In \cite{TV18}, the Saito's formula in the biharmonic case was proved, which leads to the unique identification of the perturbation of the biharmonic operator. The biharmonic inverse source problems are considered in \cite{PX21,PX22,PXY21}. For the biharmonic scattering in the nonlinear case, we refer the readers to \cite{MJV22} and the references quoted there. We also mention some work about the inverse boundary value problems for the bi- and polyharmonic operators, see e.g., \cite{YA16,AH17,MKG12,YY14,KMG14}, where the well-known complex geometrical optics solutions are generalized to the biharmonic case. 

Inspired by the natural decomposition of the biharmonic operator into the Helmholtz and modified Helmholtz operators, we investigate on the direct and inverse biharmonic obstacle scattering problems. First, we study the basic properties of the biharmonic scattering solutions, which leads to a simple criterion for the uniqueness of the direct problem. Further, we define a new type far-field pattern for the biharmonic scattering and establish its correspondence to the scattered field. For the well-posedness of the direct problem, in fact, it is easy to find that the biharmonic equation with some boundary conditions, such as the Navier condition, after decomposition can be solved by the usual methods in the acoustic scattering. However, different from the acoustic case, it is hard to develop a unified way to deal with all the boundary conditions simultaneously, since in the Dirichlet case the biharmonic equation is spiltted to an interior transmission problem in the exterior domain, which is known that can not be managed by the classical methods. With the variational method being studied in great details in \cite{LC20}, the corresponding boundary integral equation method, which has its own significance in both mathematics and practice, is much less investigated, since the only relating work \cite{HP23} contains certain strong restrictions. We therefore further develop the boundary integral equation method for the Dirichlet case in the present study. After spiltting the biharmonic scattering solution into Helmholtz and modified Helmholtz parts, and expressing both parts as a combination of the single- and double-layer potentials, we reduce the direct problem to an equivalent boundary integral equation. Through an elaborate analysis, the Fredholm properties of the corresponding integral operators are derived utilizing the theory of the pseudodifferential operators and the well-posedness follows. In contrast to \cite{LC20,HP23}, here we obtain the well-posedness in both two and three dimensions. Furthermore, we weaken the conditions in \cite{HP23} to only requiring $\pa\Om\in C^{3,\alpha}$. Finally, we prove some uniqueness theorems in determining the obstacle. Applying the reciprocity relations of the far-field pattern and scattered field, we show that the biharmonic obstacle can be uniquely identified by the measurements of the scattered fields or far-field patterns at a fixed frequency, while almost all the preceding results on the inverse biharmonic scattering problems are using multi-frequency measurements. 

The rest of this paper is organized as follows. In Section \ref{sec2}, we fix some notations and recall some estimates about the fundamental solution. In Section \ref{sec3}, we discover some foundational properties for the biharmonic scattering, which yields a simple criterion for the uniqueness of the direct problem. Moreover, a new type far-field pattern is defined. Section \ref{sec3.5} is about the establishment of the boundary integral equation method, and the well-posedness of the direct problem is obtained for Dirichlet boundary condition. Finally, in Section \ref{sec6}, we prove the uniqueness results for the inverse problem of determing the obstacle by measurements at a fixed frequency, which follows from the reciprocity relations for far-field pattern and scattered field.

\section{Preliminaries}\label{sec2}
\setcounter{equation}{0}
In this section, we introduce some notations and important asymptotic expansions for the fundamental solution used throughout the paper. 

Denote by $B_r(x)$ the open disk (ball) centered at $x\in\R^d$ with radius $r>0$. For disks (balls) centered at the origin, we abbreviate by $B_r$. Denote by $H_\nu^{(1)}$ and $K_\nu$ the Hankel function of first kind and the Macdonald's function of order $\nu$, respectively. It is well known that the fundamental solution for $\Delta^2-k^4$ in $\R^d$ is given by 
\ben
  G_k(|x|)=
  \left\{
  \begin{array}{ll}
  	\displaystyle\frac{i}{8k^2}\left(H_0^{(1)}(k|x|)+\frac{2i}{\pi}K_0(k|x|)\right),~~&d=2,\\ [3mm]
  	\displaystyle\frac{1}{8\pi k^2|x|}\left(e^{ik|x|}-e^{-k|x|}\right),~~&d=3. 
  \end{array}
  \right.
\enn

We here recall some properties of these functions (for details see \cite{NL72,GW94,WFR66}). It holds that 
\be\label{2.1}
  H_\nu^{(1)}(x)=
  \left\{
  \begin{array}{ll}
  	O(|x|^{-\nu}),~~&\nu>0,\\ [1mm]
  	O(|\ln(x)|),~~&\nu=0, 
  \end{array}
  \right.
\en
as $x\rightarrow0+$. The Macdonald's function $K_\nu$ has the same asymptotic behavior as $x\rightarrow0+$. Further, for $\nu\geq0$ 
\be\label{2.2}
  &&H_\nu^{(1)}(x)=\sqrt{\frac{2}{\pi x}}e^{i(x-\frac{1}{2}\nu\pi-\frac{1}{4}\pi)}+O\left(x^{-\frac{3}{2}}\right), \\ \label{2.3}
  &&K_\nu(x)=\sqrt{\frac{\pi}{2x}}e^{-x}+O\left(\frac{e^{-x}}{x^{\frac{3}{2}}}\right), 
\en
as $x\rightarrow+\infty$. We also remind that 
\ben
  H_{-m}^{(1)}=(-1)^mH_m^{(1)}~~~~{\rm and}~~~~K_{-m}=K_m 
\enn
for all $m\in\Z$. By the explicit expression of $G_k$ we have 
\be\label{2.4}
  G_k(|x|)=
  \left\{
  \begin{array}{ll}
  	O(1),~~~&{\text{as}}~|x|\rightarrow0,\\ [2mm]
  	\displaystyle O\left(|x|^{-\frac{d-1}{2}}\right),~&{\text{as}}~|x|\rightarrow+\infty. 
  \end{array}
  \right.
\en
Moreover, from the recurrence relations of Hankel and Macdonald's functions, it follows that 
\ben
  \na G_k(|x|)=
   \left\{
  \begin{array}{ll}
  	\displaystyle-\frac{ikx}{8k^2|x|}\left(H_1^{(1)}(k|x|)+\frac{2i}{\pi}K_1(k|x|)\right),~~&d=2,\\ [3mm]
  	\displaystyle \frac{x}{8\pi k^2|x|^3}\left((ik|x|-1)e^{ik|x|}+(k|x|+1)e^{-k|x|}\right),~~&d=3, 
  \end{array}
  \right.
\enn
and 
\ben
  \Delta G_k(|x|)=
  \left\{
  \begin{array}{ll}
  	\displaystyle-\frac{i}{8}\left(H_0^{(1)}(k|x|)-\frac{2i}{\pi}K_0(k|x|)\right),~~&d=2,\\ [3mm]
  	\displaystyle-\frac{1}{8\pi |x|}\left(e^{ik|x|}+e^{-k|x|}\right),~~&d=3, 
  \end{array}
  \right.
\enn
which indicates the asymptotic behaviors
\be\label{2.5}
\na G_k(|x|)=
\left\{
\begin{array}{ll}
	O\left(|x|^{2-d}\right),~~~&{\text{as}}~|x|\rightarrow0,\\ [2mm]
	\displaystyle O\left(|x|^{-\frac{d-1}{2}}\right),~&{\text{as}}~|x|\rightarrow+\infty, 
\end{array}
\right.
\en
and 
\be\label{2.6}
\Delta G_k(|x|)=
\left\{
\begin{array}{ll}
	O(|\ln(|x|)|),~~~&d=2,\\ [2mm]
	\displaystyle O\left(|x|^{-1}\right),~&d=3, 
\end{array}
\right.
\en
as $|x|\rightarrow0$ and 
\be\label{2.7}
  \Delta G_k(|x|)=O\left(|x|^{-\frac{d-1}{2}}\right),~~~{\text{as}}~|x|\rightarrow+\infty. 
\en
Also we see that the fundamental solution $G_k$ satisfies the radiation condition (\ref{1.2}). 

Denote by $Y_l^m$ the spherical harmonics of order $l$. Denote by $h_l^{(1)}$ and $k_l$ the spherical and modified spherical Hankel functions of order $l$, respectively. It is well known that 
\be\label{2.8}
  h_l^{(1)}(x)=\sqrt{\frac{\pi}{2x}}H_{l+\frac{1}{2}}^{(1)}(x)~~~{\rm and}~~~k_l(x)=\sqrt{\frac{\pi}{2x}}K_{l+\frac{1}{2}}(x). 
\en
We refer to \cite{DR13,NL72,JC01} for the more properties of these functions. 


\section{Some basic properties}\label{sec3}
\setcounter{equation}{0}
In this section, we prove some basic properties and define a new type far-field pattern for the biharmonic scattered solutions, which is fundamental in the theory of biharmonic scattering. Particularly, a simple criterion for the uniqueness of the direct problem is discovered and the correspondence between the far-field and scattered field is established. 
	  
\begin{lemma}\label{lem3.1}
	Suppose $u^s\in C^4(\R^d\setminus\ov\Om)\cap C^3(\R^d\setminus\Om)$ satisfies $\Delta^2u^s-k^4u^s=0$ in $\R^d\setminus\ov\Om$ and the radiation condition {\rm (\ref{1.2})}, then 
	\ben
	  \int_{\pa B_R}(|\Delta u^s|^2+|u^s|^2)ds=O(1),~~{\rm as}~R\rightarrow+\infty. 
	\enn
\end{lemma}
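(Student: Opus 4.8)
The plan is to exploit the natural decomposition of the biharmonic operator. Since $\Delta^2 u^s - k^4 u^s = (\Delta - k^2)(\Delta + k^2)u^s = 0$, set $v := (\Delta + k^2)u^s = \Delta u^s + k^2 u^s$. Then $(\Delta - k^2)v = 0$, i.e. $v$ solves the modified Helmholtz equation in $\R^d\se\ov\Om$. On the other hand, writing $w := \Delta u^s$, the radiation condition (\ref{1.2}) gives $\pa_r w - ikw = o(r^{-(d-1)/2})$ and $\pa_r u^s - iku^s = o(r^{-(d-1)/2})$. From the asymptotics of $G_k$ and its Laplacian recorded in (\ref{2.4})--(\ref{2.7}), or by a direct comparison argument, both $u^s$ and $\Delta u^s$ behave like outgoing Helmholtz-type waves; in particular each is $O(r^{-(d-1)/2})$ as $r\to\infty$, so the integral $\int_{\pa B_R}(|\Delta u^s|^2 + |u^s|^2)\,ds$ over a sphere of surface measure $O(R^{d-1})$ is automatically $O(1)$ provided the decay is genuinely uniform in $\hat x$.

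First I would make the decay of $u^s$ and $\Delta u^s$ precise by a standard Rellich-type computation rather than quoting pointwise bounds. Pick $R_0$ with $\ov\Om\subset B_{R_0}$ and work in the annulus $B_R\se\ov B_{R_0}$. Apply Green's first identity to $w = \Delta u^s$ on this annulus: since $(\Delta - k^2)v=0$ with $v = w + k^2 u^s$, and since $\Delta w = \Delta^2 u^s = k^4 u^s$, one gets a chain of identities relating $\int_{\pa B_R}|w|^2$, $\int_{\pa B_R}|\pa_r w|^2$, $\int_{\pa B_R}\Rt(\bar w\,\pa_r w)$ to the analogous quantities on $\pa B_{R_0}$ plus volume terms involving $|u^s|^2$, $|\na u^s|^2$. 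Using $\pa_r w - ikw = o(r^{-(d-1)/2})$ to expand $|\pa_r w - ikw|^2 = |\pa_r w|^2 + k^2|w|^2 - 2k\,\I(\bar w \pa_r w)$ and integrating over $\pa B_R$ shows, in the now-classical way, that $\int_{\pa B_R}|w|^2\,ds$ converges to a finite limit (it is monotone up to lower-order corrections and bounded), which is exactly the $O(1)$ bound for $\int_{\pa B_R}|\Delta u^s|^2\,ds$. The same Rellich argument applied directly to $u^s$ with $w$ in the role of $\Delta u^s$ — here one needs the identity $\Delta u^s = w$ and the already-established boundedness of $\int_{\pa B_R}|w|^2$ to control the cross term — yields $\int_{\pa B_R}|u^s|^2\,ds = O(1)$ as well.

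The main obstacle I anticipate is handling the coupling between $u^s$ and $\Delta u^s$ cleanly: unlike the pure Helmholtz case, the Rellich identity for $u^s$ does not close on its own, because $\Delta u^s$ is not a multiple of $u^s$. The fix is precisely to establish the $\Delta u^s$ estimate first (it closes because $v = \Delta u^s + k^2 u^s$ satisfies an exponentially decaying modified-Helmholtz equation, so $v$ and its radial derivative are $O(e^{-kr})$ on large spheres, and hence $\Delta u^s$ alone carries the radiating behaviour forced by (\ref{1.2})), and then feed the bound $\int_{\pa B_R}|\Delta u^s|^2\,ds = O(1)$ into the Green's-identity manipulation for $u^s$. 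One should also take care that the volume integral $\int_{B_R\se B_{R_0}}|u^s|^2\,dx$ that appears does not blow up faster than $O(R)$; this follows from the pointwise $O(r^{-(d-1)/2})$ decay, which itself comes from writing $u^s$ via a layer-potential (or Green's) representation on $\pa B_{R_0}$ using $G_k$ and invoking (\ref{2.4}). With these ingredients assembled the conclusion is immediate.
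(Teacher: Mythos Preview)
Your decomposition instinct is right, but the execution has a genuine gap. You try to run the Rellich argument on $w=\Delta u^s$ first; the obstruction is that $\Delta w = k^4 u^s$, so Green's identity produces the cross term $k^4\,\I\!\int_{B_R\setminus B_{R_0}}\overline{\Delta u^s}\,u^s\,dx$, which is \emph{not} zero and is not controlled a priori. You propose to absorb this via the ``exponential decay'' of $v=\Delta u^s+k^2u^s$, but solutions of $(\Delta-k^2)v=0$ do not decay automatically: you must first rule out the growing modified-Bessel modes, and you never say how. Your fallback --- pointwise $O(r^{-(d-1)/2})$ decay from a Green's representation of $u^s$ with kernel $G_k$ --- is circular: that representation (Theorem~\ref{thm3.2} in the paper) is proved \emph{using} Lemma~\ref{lem3.1}, precisely to make the outer boundary integral $I_2$ vanish.

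There are two clean ways out. The first is to carry the decomposition through properly: apply the standard Rellich computation not to $u^s$ and $\Delta u^s$ but to $u^s_\pm:=\Delta u^s\pm k^2u^s$ separately. Each satisfies $\Delta u^s_\pm=\mp k^2u^s_\pm$, so the volume term $\I\!\int u^s_\pm\,\Delta\overline{u^s_\pm}\,dx=\mp k^2\I\!\int|u^s_\pm|^2\,dx=0$ vanishes, the argument closes for each, and $u^s,\Delta u^s$ are recovered as linear combinations. The second --- and this is what the paper does --- is even shorter: do not decompose at all, but add the Rellich identities for $w=\Delta u^s$ and $w=u^s$ with weights $1$ and $k^4$. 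The combined volume term becomes $\I\!\int\bigl(\Delta u^s\,\overline{\Delta^2 u^s}+k^4 u^s\,\overline{\Delta u^s}\bigr)dx=k^4\,\I\!\int\bigl(\Delta u^s\,\overline{u^s}+u^s\,\overline{\Delta u^s}\bigr)dx=0$ by the biharmonic equation, so one obtains directly that $\int_{\pa B_R}\bigl(|\pa_r\Delta u^s|^2+k^2|\Delta u^s|^2+k^4|\pa_r u^s|^2+k^6|u^s|^2\bigr)ds$ has a finite limit, and the lemma follows.
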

\begin{proof}
	We choose $R>0$ large enough such that $\ov\Om\subset B_R$. From the radiation conditon (\ref{1.2}), we have that 
	\be\label{3.1}\nonumber
	  0~&&=\lim\limits_{R\rightarrow\infty}\int_{\pa B_R}\left|\pa_rw-ikw\right|^2ds \\
	  &&=\lim\limits_{R\rightarrow\infty}\int_{\pa B_R}\left(|\pa_rw|^2+k^2|w|^2+2k\I(w\pa_r\ov w)\right)ds
	\en
	for $w=u^s,\Delta u^s$. Integration by parts over $B_R\setminus\ov\Om$ yields that 
	\ben
	 \I\int_{\pa B_R}w\pa_r\ov wds=\I\int_{\pa\Om}w\pa_n\ov wds+\I\int_{B_R\setminus\ov\Om}w\Delta\ov wdx, 
	\enn
	which implies 
	\ben
	  &&\I\int_{\pa B_R}\Delta u^s\pa_r\Delta\ov u^sds+k^4 \I\int_{\pa B_R}u^s\pa_r\ov u^sds \\
	  =\;&&\I\int_{\pa\Om}\Delta u^s\pa_n\Delta\ov u^sds+k^4 \I\int_{\pa\Om}u^s\pa_n\ov u^sds+\I\int_{B_R\setminus\ov\Om}\Delta u^s(\Delta^2\ov u^s-k^4\ov u^s)dx \\
	  =\;&&\I\int_{\pa\Om}(\Delta u^s\pa_n\Delta\ov u^s+k^4u^s\pa_n\ov u^s)ds. 
	\enn
	Hence, by (\ref{3.1}) we derive that 
	\be\label{3.2}\nonumber
	  &&\lim\limits_{R\rightarrow\infty}\int_{\pa B_R}[|\pa_r\Delta u^s|^2+k^2|\Delta u^s|^2+k^4(|\pa_r u^s|^2+k^2|u^s|^2)]ds \\
	  =\;&&-2k\I\int_{\pa\Om}(\Delta u^s\pa_n\Delta\ov u^s+k^4u^s\pa_n\ov u^s)ds, 
	\en
	and the conclusion follows. 
\end{proof}
\begin{theorem}\label{thm3.2}
	Under the assumptions in Lemma {\rm\ref{lem3.1}}, for $x\in\R^d\setminus\ov\Om$ we have 
	\ben
	  u^s(x)=-\int_{\pa\Om}&&\left(u^s(y)\pa_{n(y)}\Delta_yG_k(|x-y|)+\Delta u^s(y)\pa_{n(y)}G_k(|x-y|)\right. \\
	  &&~\left.-G_k(|x-y|)\pa_n\Delta u^s(y)-\Delta_yG_k(|x-y|)\pa_nu^s(y)\right)ds(y). 
	\enn
\end{theorem}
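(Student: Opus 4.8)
The plan is to establish a Green's-type representation formula for the biharmonic operator $\Delta^2-k^4$, mimicking the classical Green representation theorem for the Helmholtz equation but adapted to the fourth-order setting. The key algebraic input is the second Green's identity applied twice: for any two sufficiently smooth functions $u,v$ on a domain $D$, one has $\int_D(v\Delta^2 u - u\Delta^2 v)\,dx = \int_{\pa D}(v\pa_n\Delta u - \Delta u\,\pa_n v + \Delta v\,\pa_n u - u\,\pa_n\Delta v)\,ds$, which follows by writing $\Delta^2$ as $\Delta\circ\Delta$ and applying the ordinary second Green identity twice (once to the pair $v,\Delta u$ and once to the pair $u,\Delta v$, with the $\Delta v\,\Delta u$ interior terms cancelling). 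Subtracting $k^4\int_D(vu-uv)\,dx=0$ shows that the same boundary identity holds with $\Delta^2$ replaced by $\Delta^2-k^4$.

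First I would fix $x\in\R^d\setminus\ov\Om$ and choose $R$ large with $\ov\Om\cup\{x\}\subset B_R$, and apply the above identity on the domain $D_{R,\vep}=B_R\setminus(\ov\Om\cup\ov{B_\vep(x)})$ with $u=u^s$ and $v=G_k(|\cdot-y|)$. Since both $u^s$ and $G_k(|\cdot-x|)$ satisfy the homogeneous biharmonic Helmholtz equation in $D_{R,\vep}$, the interior integral vanishes, leaving boundary contributions over $\pa B_R$, $\pa\Om$ (with the exterior normal of $\Om$, which is the interior normal of $D_{R,\vep}$, hence the sign in the statement), and $\pa B_\vep(x)$. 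Next I would analyze the three boundary pieces separately: (i) as $\vep\to0$, the singularity analysis of $G_k$ near the diagonal — using the asymptotics (\ref{2.4}), (\ref{2.5}), (\ref{2.6}) together with the fact that $G_k(|z|)=O(1)$, $\na G_k=O(|z|^{2-d})$, $\Delta G_k=O(|\ln|z||)$ or $O(|z|^{-1})$, and crucially that $\pa_{n}\Delta G_k$ carries the same principal singularity as the fundamental solution of $\Delta$ itself (since $\Delta G_k$ differs from the Helmholtz/Laplace fundamental solution by a bounded or mildly singular remainder) — shows that the integral over $\pa B_\vep(x)$ converges to $u^s(x)$, exactly as in the classical Helmholtz representation; (ii) the integral over $\pa B_R$ tends to $0$ as $R\to\infty$, which I would prove using the radiation condition (\ref{1.2}) for both $w=u^s$ and $w=\Delta u^s$, the corresponding radiation condition satisfied by $G_k$ and $\Delta G_k$ (noted after (\ref{2.7})), and the boundedness estimate $\int_{\pa B_R}(|\Delta u^s|^2+|u^s|^2)\,ds=O(1)$ from Lemma~\ref{lem3.1} — the standard trick of adding and subtracting $ik$ times the functions and applying Cauchy–Schwarz makes the $\pa B_R$ terms $o(1)$; (iii) the integral over $\pa\Om$ is exactly the claimed expression, with the overall minus sign coming from the orientation of the normal.

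I expect the main obstacle to be the $\pa B_\vep(x)$ limit, specifically verifying that the fourth-order boundary terms produce precisely $u^s(x)$ with no spurious contributions. This requires care because $\Delta G_k$ is itself singular (logarithmically in 2D, like $|z|^{-1}$ in 3D), so the term $\int_{\pa B_\vep(x)}\Delta u^s(y)\,\pa_{n(y)}G_k(|x-y|)\,ds(y)$ and the term $\int_{\pa B_\vep(x)}u^s(y)\,\pa_{n(y)}\Delta_y G_k(|x-y|)\,ds(y)$ must be shown to behave correctly: the former vanishes because $\pa_n G_k=O(\vep^{2-d})$ against surface measure $O(\vep^{d-1})$ gives $O(\vep)\to0$, while the latter contributes $u^s(x)$ because $\Delta_y G_k(|x-y|)$ has the same leading singularity as the fundamental solution of the Laplacian (up to the sign), so its normal derivative integrates to $1$ over the small sphere in the limit, reproducing the mean-value/jump behavior; meanwhile $\int_{\pa B_\vep(x)}G_k\,\pa_n\Delta u^s\,ds=O(\vep^{d-1})\to0$ and $\int_{\pa B_\vep(x)}\Delta_y G_k\,\pa_n u^s\,ds$ is $O(\vep^{d-1}\log\vep)$ or $O(\vep^{d-1}\cdot\vep^{-1})=O(1)\cdot o(1)$ and also vanishes. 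Collecting these limits and rearranging gives the stated formula. A clean way to organize the singular analysis is to write $G_k=G_k^{\text{sing}}+G_k^{\text{reg}}$ where $G_k^{\text{sing}}$ is an explicit combination of the Helmholtz and modified-Helmholtz fundamental solutions whose biharmonic-type boundary behavior near the diagonal is classical, and $G_k^{\text{reg}}\in C^2$ near $x$ contributes nothing in the $\vep\to0$ limit; then only the known Helmholtz computation is needed.
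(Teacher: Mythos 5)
Your proposal follows essentially the same route as the paper: the double application of Green's second identity on $B_R\setminus(\ov\Om\cup\ov{B_\vep(x)})$, the vanishing of the $\pa B_R$ contribution via the radiation condition (\ref{1.2}) together with Lemma~\ref{lem3.1} and the decay of $G_k$ and $\Delta G_k$, and the extraction of $u^s(x)$ from the $u^s\,\pa_{n(y)}\Delta_yG_k$ term on $\pa B_\vep(x)$ (the paper quotes the expansion $\pa_{n(y)}\Delta_yG_k=\tfrac{\Gamma(d/2)}{2\pi^{d/2}}|x-y|^{1-d}+O(|x-y|^{2-d})$ for exactly this purpose), with all other small-sphere terms vanishing by (\ref{2.4})--(\ref{2.6}). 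The only slip is the typo $v=G_k(|\cdot-y|)$, which should read $G_k(|\cdot-x|)$; otherwise the argument and all order-of-magnitude estimates match the paper's proof.
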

\begin{proof}
	For fixed $x\in\R^d\setminus\ov\Om$, we choose $R>0$ sufficiently large such that $\ov\Om\cup\{x\}\subset B_R$. Let $\varepsilon>0$ be small enough such that $\ov{B_\varepsilon(x)}\subset B_R\setminus\ov\Om$. Denote $S_{R,\varepsilon}:=(B_R\setminus\ov \Om)\setminus\ov{ B_\varepsilon(x)}$. Then we have 
	\ben
	  0~&&=\int_{S_{R,\varepsilon}}\left(u^s(y)(\Delta_y^2-k^4)G_k(|x-y|)-G_k(|x-y|)(\Delta^2-k^4)u^s(y)\right)dy \\
	  &&=\int_{\pa S_{R,\varepsilon}}\left(u^s(y)\pa_{n(y)}\Delta_yG_k(|x-y|)+\Delta u^s(y)\pa_{n(y)}G_k(|x-y|)\right. \\
	  &&~\qquad\qquad\left.-G_k(|x-y|)\pa_n\Delta u^s(y)-\Delta_yG_k(|x-y|)\pa_nu^s(y)\right)ds(y) \\
	  &&=\left(-\int_{\pa\Om}+\int_{\pa B_R}-\int_{\pa B_\varepsilon(x)}\right)\left(u^s(y)\pa_{n(y)}\Delta_yG_k(|x-y|)+\Delta u^s(y)\pa_{n(y)}G_k(|x-y|)\right. \\
	  &&~\qquad\qquad\left.-G_k(|x-y|)\pa_n\Delta u^s(y)-\Delta_yG_k(|x-y|)\pa_nu^s(y)\right)ds(y) \\
	  &&=I_1+I_2+I_3, 
	\enn
	We see that 
	\ben
	  I_2=\int_{\pa B_R}&&\left(u^s(y)(\pa_{n(y)}-ik)\Delta_yG_k(|x-y|)+\Delta u^s(y)(\pa_{n(y)}-ik)G_k(|x-y|)\right. \\
	  &&~\left.-G_k(|x-y|)(\pa_r-ik)\Delta u^s(y)-\Delta_yG_k(|x-y|)(\pa_r-ik)u^s(y)\right)ds(y). 
	\enn
	By Lemma \ref{lem3.1}, the radiation condition (\ref{1.2}) and the fact that $G_k(|x-y|),\Delta_y G_k(|x-y|)=O(R^{-(d-1)/2})$, we get $I_2\rightarrow0$ as $R\rightarrow+\infty$. From the asymptoic behavior (\ref{2.4})-(\ref{2.6}), we obtain that 
	\ben
	  \int_{\pa B_\varepsilon(x)}&&\left(\Delta u^s(y)\pa_{n(y)}G_k(|x-y|)-G_k(|x-y|)\pa_n\Delta u^s(y)\right. \\
	  &&~\left.-\Delta_yG_k(|x-y|)\pa_nu^s(y)\right)ds(y)\rightarrow0 
	\enn
	as $\varepsilon\rightarrow0$. Further, it is well known that on $\pa B_\varepsilon(x)$ (see, e.g., \cite{FD06,NL72,DR13})
	\ben
	 \displaystyle\pa_{n(y)}\Delta_yG_k(|x-y|)=\frac{\Gamma(\frac{d}{2})}{2\pi^{\frac{d}{2}}}\frac{1}{|x-y|^{d-1}}+O\left(\frac{1}{|x-y|^{d-2}}\right) 
	\enn
	with $\Gamma$ the Gamma function, which implies that 
	\ben
	  \int_{\pa B_\varepsilon(x)}u^s(y)\pa_{n(y)}\Delta_yG_k(|x-y|)ds(y)\rightarrow u^s(x),~~{\text{as}}~\varepsilon\rightarrow0. 
	\enn
	Therefore, $I_3\rightarrow -u^s(x)$ as $\varepsilon\rightarrow0$, which ends the proof. 
\end{proof} 

Arguing analogously as above, we can get the representation theorem in bounded domains. 
\begin{theorem}\label{thm3.3}
	Suppose $u\in C^4(\Om)\cap C^3(\ov\Om)$ and $\Delta^2u-k^4u=0$ in $\Om$, then for $x\in\Om$ we have 
	\ben
	  u(x)=\int_{\pa\Om}&&\left(u(y)\pa_{n(y)}\Delta_yG_k(|x-y|)+\Delta u(y)\pa_{n(y)}G_k(|x-y|)\right. \\
	  &&~\left.-G_k(|x-y|)\pa_n\Delta u(y)-\Delta_yG_k(|x-y|)\pa_nu(y)\right)ds(y). 
	\enn
\end{theorem}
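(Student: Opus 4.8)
The plan is to transcribe the proof of Theorem~\ref{thm3.2} almost verbatim, now with the bounded domain $\Om$ playing the role of the exterior region $\R^d\se\ov\Om$; the only simplification is that no sphere $\pa B_R$ at infinity enters, so neither Lemma~\ref{lem3.1} nor the radiation condition (\ref{1.2}) is invoked.

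First I would fix $x\in\Om$ and choose $\vep>0$ so small that $\ov{B_\vep(x)}\subset\Om$, and set $D_\vep:=\Om\se\ov{B_\vep(x)}$. On $D_\vep$ both $u$ and $y\mapsto G_k(|x-y|)$ belong to $C^4$ and satisfy $\Delta^2w-k^4w=0$, so, exactly as in the proof of Theorem~\ref{thm3.2}, the fourth-order Green identity gives
\ben
0&=&\int_{D_\vep}\big(u(y)(\Delta_y^2-k^4)G_k(|x-y|)-G_k(|x-y|)(\Delta^2-k^4)u(y)\big)dy\\
&=&\int_{\pa D_\vep}\mathcal I(y)\,ds(y),
\enn
where $\mathcal I(y)$ denotes the boundary integrand appearing in Theorem~\ref{thm3.2}, that is,
\ben
\mathcal I(y)&=&u(y)\pa_{n(y)}\Delta_yG_k(|x-y|)+\Delta u(y)\pa_{n(y)}G_k(|x-y|)\\
&&-\,G_k(|x-y|)\pa_n\Delta u(y)-\Delta_yG_k(|x-y|)\pa_nu(y).
\enn
Since $\pa D_\vep=\pa\Om\cup\pa B_\vep(x)$, and the outward unit normal of $D_\vep$ coincides with the exterior normal $n$ of $\Om$ on $\pa\Om$ but points towards $x$ on $\pa B_\vep(x)$, the identity reads $0=\int_{\pa\Om}\mathcal I\,ds-\int_{\pa B_\vep(x)}\mathcal I\,ds$ once $\mathcal I$ is written with the exterior normal on each of the two pieces. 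Note that the sign in front of $\int_{\pa\Om}$ is opposite to the one in Theorem~\ref{thm3.2}, precisely because there $\Om$ lies outside the integration domain.

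It then remains to let $\vep\to0$ in the second integral, and this step is identical to the corresponding one in the proof of Theorem~\ref{thm3.2}. Using the asymptotics (\ref{2.4})--(\ref{2.6}) for $G_k$, $\na G_k$ and $\Delta G_k$, together with $|\pa B_\vep(x)|=O(\vep^{d-1})$ and the boundedness of $u,\pa_nu,\Delta u,\pa_n\Delta u$ near $x$, the three terms $\Delta u\,\pa_nG_k$, $G_k\,\pa_n\Delta u$ and $\Delta_yG_k\,\pa_nu$ each contribute $o(1)$; for the remaining term, the local expansion $\pa_{n(y)}\Delta_yG_k(|x-y|)=\frac{\Gamma(d/2)}{2\pi^{d/2}}|x-y|^{-(d-1)}+O(|x-y|^{2-d})$ recalled in the excerpt, combined with the continuity of $u$ at $x$, gives $\int_{\pa B_\vep(x)}u(y)\pa_{n(y)}\Delta_yG_k(|x-y|)ds(y)\to u(x)$, whence $\int_{\pa B_\vep(x)}\mathcal I\,ds\to u(x)$. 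Passing to the limit in $0=\int_{\pa\Om}\mathcal I\,ds-\int_{\pa B_\vep(x)}\mathcal I\,ds$ then yields $u(x)=\int_{\pa\Om}\mathcal I\,ds$, which is exactly the claimed representation.

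I do not expect a genuine obstacle in this argument: the only points that need attention are the orientation bookkeeping on the two boundary components of $D_\vep$ (which is what produces the $+$ sign before $\int_{\pa\Om}$, in contrast to the $-$ sign in Theorem~\ref{thm3.2}) and the verification that the remainders over $\pa B_\vep(x)$ vanish, both of which go through exactly as before. If one wishes to be scrupulous about applying the Green identity with only $C^3$ regularity up to $\pa\Om$, one may first carry out the computation on slightly smaller subdomains $\Om_\delta$ with $\ov{\Om_\delta}\subset\Om$ exhausting $\Om$ and then let $\delta\to0$, using $u\in C^3(\ov\Om)$.
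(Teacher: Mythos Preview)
Your proposal is correct and is precisely what the paper intends: it states only ``Arguing analogously as above, we can get the representation theorem in bounded domains,'' and your write-up carries out that analogy in full, with the correct orientation bookkeeping that produces the $+\int_{\pa\Om}$ sign and the same $\vep\to0$ analysis as in Theorem~\ref{thm3.2}. Your closing remark on approximating $\Om$ from within to accommodate the $C^3(\ov\Om)$ regularity is a welcome addition that the paper leaves implicit.
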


The following result is a simple application of Rellich's Lemma.  
\begin{theorem}\label{thm3.4}
	Let $u\in C^4(\R^d\setminus\ov\Om)$ solves $\Delta^2u-k^4u=0$ in $\R^d\setminus\ov\Om$. If further 
	\ben
		\lim\limits_{R\rightarrow\infty}\int_{\pa B_R}(|\Delta u|^2+|u|^2)ds=0, 
	\enn
	then $\Delta u-k^2u=0$ in $\R^d\setminus\ov\Om$. 
\end{theorem}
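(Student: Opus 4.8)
The plan is to exploit the natural factorization $\Delta^2-k^4=(\Delta-k^2)(\Delta+k^2)$. Write $v:=\Delta u-k^2u$, which solves the Helmholtz equation $\Delta v+k^2v=0$ in $\R^d\setminus\ov\Om$, since $(\Delta+k^2)v=(\Delta+k^2)(\Delta-k^2)u=\Delta^2u-k^4u=0$. The goal is to show $v\equiv0$ via Rellich's Lemma, for which I need to verify that $\int_{\pa B_R}|v|^2\,ds\to0$ as $R\to\infty$.

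First I would note that $|v|^2=|\Delta u-k^2u|^2\leq 2|\Delta u|^2+2k^4|u|^2$, so the hypothesis $\lim_{R\to\infty}\int_{\pa B_R}(|\Delta u|^2+|u|^2)\,ds=0$ immediately gives $\lim_{R\to\infty}\int_{\pa B_R}|v|^2\,ds=0$. Since $v$ solves the Helmholtz equation in the exterior domain $\R^d\setminus\ov\Om$ (which has connected complement, so in particular the unbounded component is where Rellich applies), the classical Rellich Lemma yields $v=0$ in the unbounded component of the exterior of any ball containing $\ov\Om$. Then unique continuation for the Helmholtz equation (or for the elliptic operator $\Delta+k^2$) propagates this to all of $\R^d\setminus\ov\Om$, giving $\Delta u-k^2u=0$ there.

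The main obstacle — really the only subtlety — is making sure the Rellich Lemma applies cleanly: it requires $v$ to be a solution of the Helmholtz equation in the exterior of some ball and the spherical $L^2$-means of $v$ to vanish in the limit, both of which are in hand, but one should be slightly careful that $v\in C^2$ so that $\Delta v+k^2v=0$ holds in the classical sense (this follows from $u\in C^4$). A mild alternative, if one prefers to avoid invoking unique continuation, is to first apply Rellich to conclude $v\equiv0$ on $\R^d\setminus \ov{B_R}$ for large $R$, and then observe that $v$ solving $\Delta v+k^2v=0$ and vanishing on an open set forces $v\equiv0$ on the connected set $\R^d\setminus\ov\Om$; since the complement of $\Om$ is connected, this is exactly the unique continuation step and cannot be dropped. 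I expect the write-up to be short: state $v=\Delta u-k^2u$, check the Helmholtz equation, bound $\int_{\pa B_R}|v|^2\,ds$ by the hypothesis, invoke Rellich and unique continuation.
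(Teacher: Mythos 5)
Your proposal is correct and follows essentially the same route as the paper: set $v=\Delta u-k^2u$, observe it solves the Helmholtz equation by the factorization $\Delta^2-k^4=(\Delta+k^2)(\Delta-k^2)$, bound $\int_{\pa B_R}|v|^2\,ds$ by the hypothesis, and invoke Rellich's Lemma. The paper uses the parallelogram-type identity $|(\Delta-k^2)u|^2+|(\Delta+k^2)u|^2=2(|\Delta u|^2+k^4|u|^2)$ where you use the triangle inequality, and it leaves the unique-continuation step implicit in its citation of Rellich's Lemma, but these are cosmetic differences.
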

\begin{proof}
	Clearly, $(\Delta-k^2)u\in C^2(\R^d\setminus\ov\Om)$ is a solution to the Helmholtz equation with  
	\ben
	\int_{\pa B_R}|(\Delta-k^2)u|^2ds~&&\leq\int_{\pa B_R}(|(\Delta-k^2)u|^2+|(\Delta+k^2)u|^2)ds \\
	&&=\int_{\pa B_R}2(|\Delta u|^2+k^4|u|^2)ds\rightarrow0,~~R\rightarrow+\infty. 
	\enn
	Thus by Rellich's Lemma \cite[Theorem 3.5]{FD06}, we have $\Delta u-k^2u=0$ in $\R^d\setminus\ov\Om$. 
\end{proof}

Next we are interesting in the behavior of the biharmonic solutions outside some large disk (ball), which are essential for the later establishment of the uniqueness result for the direct biharmonic scattering problem. 
\begin{lemma}\label{lem3.12}
	Let $R>0$ be such that $\ov\Om\subset B_R$. Suppose $v^s\in C^2(\R^d\setminus\ov\Om)$ satisfies $\Delta v^s-k^2v^s=0$ in $\R^d\setminus\ov\Om$ and the classical Sommerfeld radiation condition, i.e., 
	\be\label{3.6}
	\pa_rv^s-ikv^s=o\left(r^{-\frac{d-1}{2}}\right),~~r=|x|\rightarrow\infty. 
	\en
	Then for $x\in\R^d\setminus B_R$ we have that 
	\ben
	  &&\displaystyle v^s(r,\theta)=\sum\limits_{m\in\Z}a_mK_m(kr)e^{im\theta},~~~~{\rm if}~d=2, \\ 
	  &&\displaystyle v^s(r,\theta,\varphi)=\sum\limits_{l\in\N}\sum_{m=-l}^{l}a_l^mk_l(kr)Y_l^m(\theta,\varphi),~~~~{\rm if}~d=3, 
	\enn
	where $a_m,a_l^m$ are constants. 
\end{lemma}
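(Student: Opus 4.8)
The plan is to treat the exterior modified Helmholtz problem by separation of variables in the shell $\R^d\se\ov{B_R}$ and then to use the radiation condition (\ref{3.6}) to discard the exponentially growing radial modes. Since $\ov\Om\subset B_R$, the field $v^s$ is of class $C^2$ on the whole shell $\R^d\se\ov{B_R}$ and solves $\Delta v^s-k^2v^s=0$ there. First I would expand $v^s$ on each sphere in the natural angular basis: for $d=2$, writing $x=(r\cos\theta,r\sin\theta)$, put $a_m(r)=\frac{1}{2\pi}\int_0^{2\pi}v^s(r,\theta)e^{-im\theta}\,d\theta$ for $m\in\Z$; for $d=3$, with $\hat x$ described by angles $(\theta,\varphi)$, put $a_l^m(r)=\int_{\Sp^{d-1}}v^s(r,\hat x)\,\ov{Y_l^m(\hat x)}\,ds(\hat x)$ for $l\in\N$, $|m|\le l$. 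Differentiating under the integral sign (legitimate since $v^s\in C^2$) and using the polar/spherical form of the Laplacian together with $\pa_\theta^2e^{im\theta}=-m^2e^{im\theta}$, resp. $\Delta_{\Sp^{d-1}}Y_l^m=-l(l+1)Y_l^m$, each coefficient is a $C^2$ solution on $(R,\ify)$ of a modified Bessel type ODE: $a_m''+r^{-1}a_m'-(k^2+m^2r^{-2})a_m=0$ when $d=2$, and the corresponding spherical modified Bessel equation of order $l$ when $d=3$. Hence $a_m(r)=\alpha_mI_m(kr)+\beta_mK_m(kr)$ when $d=2$, and $a_l^m(r)=\alpha_l^m\,i_l(kr)+\beta_l^m\,k_l(kr)$ when $d=3$, with $I_m,i_l$ the (spherical) modified Bessel functions of the first kind, $K_m,k_l$ those of the second kind, and $\alpha_\bullet,\beta_\bullet$ constants.

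Next I would eliminate the first-kind contributions using (\ref{3.6}). Recall the standard large-argument behavior (see e.g. \cite{NL72,GW94,DR13}): $K_m(kr)$ and $k_l(kr)$ decay like $r^{-1/2}e^{-kr}$ and $r^{-1}e^{-kr}$, whereas $I_m(kr)$ and $i_l(kr)$ grow like $r^{-1/2}e^{kr}$ and $r^{-1}e^{kr}$; moreover $(\pa_r-ik)I_m(kr)=k(1-i)I_m(kr)+O(r^{-1}I_m(kr))$, and similarly for $i_l$, so the operator $\pa_r-ik$ preserves the exponential decay of $K_m,k_l$ and the exponential growth of $I_m,i_l$. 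Then I would project (\ref{3.6}): integrating $\pa_rv^s-ikv^s=o(r^{-(d-1)/2})$, which holds uniformly on the (finite-measure) unit sphere, against $e^{-im\theta}$, resp. $\ov{Y_l^m}$, and commuting $\pa_r$ with the integral gives $(\pa_r-ik)a_m(r)=o(r^{-(d-1)/2})$, resp. $(\pa_r-ik)a_l^m(r)=o(r^{-(d-1)/2})$, as $r\to\ify$. An exponentially growing function is never $o(r^{-(d-1)/2})$, so the growing term must vanish: $\alpha_m=0$ for all $m$, resp. $\alpha_l^m=0$ for all $l,m$. (Conversely each $K_m(kr)e^{im\theta}$ and each $k_l(kr)Y_l^m$ trivially satisfies (\ref{3.6}) by exponential decay, so the condition is consistent with the claimed conclusion.)

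Finally I would reassemble. After the elimination step the surviving coefficients are precisely the Fourier, resp. spherical-harmonic, coefficients of $v^s$ on each sphere $\pa B_r$ with $r>R$, so $\sum_m a_m(r)e^{im\theta}$, resp. $\sum_{l,m}a_l^m(r)Y_l^m$, converges to $v^s(r,\cdot)$ in $L^2(\pa B_r)$ by Parseval; since $z\mapsto K_m(kz)$ and $z\mapsto k_l(kz)$ are positive and decreasing, the $\ell^2$ bound on the coefficients is uniform for $r$ in compact subsets of $(R,\ify)$, and interior (real-analytic) regularity for solutions of the modified Helmholtz equation upgrades the convergence to $C^\ify$ on compact subsets of $\R^d\se\ov{B_R}$; renaming $\beta_\bullet$ as $a_\bullet$ yields the stated formulas. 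I expect the only genuinely delicate point to be the elimination step: one must check that projecting the pointwise radiation condition mode-by-mode is legitimate — which is immediate from $v^s\in C^2$ and the finite measure of the sphere — and that $\pa_r-ik$ cannot conspire to cancel the exponential growth of $I_m(kr)$, which it cannot, since $(\pa_r-ik)I_m(kr)\sim k(1-i)I_m(kr)$ to leading order. Everything else is the classical separation-of-variables computation for the exterior modified Helmholtz problem.
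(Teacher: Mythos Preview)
Your proposal is correct and follows precisely the approach the paper indicates: the paper's own proof reads simply ``The proof is analogously to the Helmholtz equation case, we thus omit it here,'' and your separation-of-variables argument---Fourier/spherical-harmonic expansion on spheres, reduction to the modified Bessel ODE for each coefficient, and elimination of the exponentially growing $I_m$ (resp.\ $i_l$) modes via the projected radiation condition---is exactly that analogous computation carried out in detail.
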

\begin{proof}
	The proof is analogously to the Helmholtz equation case, we thus omit it here. 
\end{proof}

Combining Lemma \ref{lem3.12}, the series expansion for the scattering solution to the Helmholtz equation and the fact that $\Delta^2-k^4=(\Delta+k^2)(\Delta-k^2)$, we immediately obtain the expansion for the biharmonic scattering solutions. 
\begin{theorem}\label{thm3.5}
	Let $R>0$ be such that $\ov\Om\subset B_R$. Suppose $u^s\in C^4(\R^d\setminus\ov\Om)$ satisfying $\Delta^2u^s-k^4u^s=0$ in $\R^d\setminus\ov \Om$ and the radiation condition {\rm(\ref{1.2})}, then for $x\in\R^d\setminus B_R$, 
	\be\label{3.3}
	  &&u^s(r,\theta)=\sum\limits_{m\in\Z}\left(a_mH_m^{(1)}(kr)+b_mK_m(kr)\right)e^{im\theta},~~~~{\rm if}~d=2, \\ \label{3.35}
	  &&u^s(r,\theta,\varphi)=\sum\limits_{l\in\N}\sum_{m=-l}^{l}\left(a_l^mh_l^{(1)}(kr)+b_l^mk_l(kr)\right)Y_l^m(\theta,\varphi),~~~~{\rm if}~d=3, 
	\en
	where $a_m,b_m,a_l^m,b_l^m$ are constants. 
\end{theorem}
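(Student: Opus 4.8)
The plan is to use the factorization $\Delta^2-k^4=(\Delta+k^2)(\Delta-k^2)$ to split $u^s$ into a Helmholtz part and a modified-Helmholtz part, each of which admits a known series expansion, and then add the two series. First I would set $v^s:=(\Delta-k^2)u^s$, so that $v^s\in C^2(\R^d\setminus\ov\Om)$ solves the Helmholtz equation $\Delta v^s+k^2v^s=0$ in $\R^d\setminus\ov\Om$. The immediate task is to check that $v^s$ satisfies the classical Sommerfeld radiation condition (\ref{3.6}); this follows directly from the second component of (\ref{1.2}), namely $\pa_r\Delta u^s-ik\Delta u^s=o(r^{-(d-1)/2})$, combined with $\pa_ru^s-iku^s=o(r^{-(d-1)/2})$, since $v^s=\Delta u^s-k^2u^s$. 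Then the classical expansion theorem for radiating solutions of the Helmholtz equation (cf. \cite[Theorem 2.15]{FD06}, or the corresponding three-dimensional result) gives, for $x\in\R^d\setminus B_R$,
\ben
  v^s(r,\theta)=\sum_{m\in\Z}c_mH_m^{(1)}(kr)e^{im\theta}\quad(d=2),\qquad
  v^s(r,\theta,\varphi)=\sum_{l\in\N}\sum_{m=-l}^{l}c_l^mh_l^{(1)}(kr)Y_l^m(\theta,\varphi)\quad(d=3),
\enn
with the series converging uniformly on compact subsets of $\R^d\setminus \ov{B_R}$.

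Next I would recover $u^s$ by solving the inhomogeneous modified Helmholtz equation $(\Delta-k^2)u^s=v^s$ in $\R^d\setminus B_R$. Write $u^s=u^s_p+u^s_h$, where $u^s_p$ is a particular solution and $u^s_h$ is a solution of the homogeneous modified Helmholtz equation $(\Delta-k^2)u^s_h=0$. For the homogeneous part I invoke Lemma \ref{lem3.12}: since $u^s$ inherits the first radiation condition $\pa_ru^s-iku^s=o(r^{-(d-1)/2})$ and $v^s$ decays exponentially (by (\ref{2.3}) the Hankel-type terms involved are actually... rather, $v^s$ is radiating Helmholtz and decays like $r^{-(d-1)/2}$), one checks that $u^s_h$ also satisfies (\ref{3.6}), hence is a superposition of $K_m(kr)e^{im\theta}$ (resp.\ $k_l(kr)Y_l^m$). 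For the particular solution, I expand $v^s$ in the angular eigenfunctions as above and solve the resulting ODE in $r$ for each Fourier/spherical-harmonic mode: for the $m$-th mode in $d=2$ one needs a particular solution of $w''+\frac1r w'-(k^2+\frac{m^2}{r^2})w=c_mH_m^{(1)}(kr)$, and similarly in $d=3$. The key algebraic fact is that $(\Delta+k^2)H_m^{(1)}(kr)e^{im\theta}=0$, so that $(\Delta-k^2)\big(H_m^{(1)}(kr)e^{im\theta}\big)=-2k^2H_m^{(1)}(kr)e^{im\theta}$; thus $u^s_p=-\tfrac{1}{2k^2}\sum_m c_mH_m^{(1)}(kr)e^{im\theta}$ is an explicit particular solution, and likewise in three dimensions with $h_l^{(1)}$. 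Absorbing the constant $-\tfrac1{2k^2}c_m$ into $a_m$ and writing $b_m$ for the coefficients of the homogeneous $K_m$-part gives exactly (\ref{3.3}) and (\ref{3.35}).

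I expect the main obstacle to be the justification of the radiation-condition bookkeeping and the convergence of the combined series — specifically, verifying that $u^s_h=u^s-u^s_p$ genuinely satisfies the hypotheses of Lemma \ref{lem3.12}, i.e., that subtracting the $H_m^{(1)}$-series (which itself is only conditionally a radiating solution) from $u^s$ leaves something that still obeys the Sommerfeld condition and to which the modified-Helmholtz expansion applies. One must also ensure the two series (the $H_m^{(1)}$/$h_l^{(1)}$ part and the $K_m$/$k_l$ part) can be added termwise and that the result converges uniformly on compact subsets of $\R^d\setminus \ov{B_R}$; this uses the exponential decay of $K_m,k_l$ and the standard Cauchy estimates on the Helmholtz expansion coefficients. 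Since the authors say "the proof is analogously to the Helmholtz equation case" for Lemma \ref{lem3.12}, I would likewise keep this part brief, citing the classical references \cite{FD06,DR13} for the Helmholtz expansion and the growth/decay properties of the special functions recalled in Section \ref{sec2}.
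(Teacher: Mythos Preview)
Your approach is correct and uses exactly the same ingredients the paper invokes (the factorization $\Delta^2-k^4=(\Delta+k^2)(\Delta-k^2)$, Lemma~\ref{lem3.12}, and the classical Helmholtz expansion), but you reach the decomposition through an unnecessary detour via an inhomogeneous equation. The paper's intended route is more direct: set $u^s_\pm:=(\Delta\pm k^2)u^s$, so that $u^s_-$ is a radiating Helmholtz solution and $u^s_+$ a radiating modified-Helmholtz solution (both Sommerfeld conditions follow immediately from (\ref{1.2}) as linear combinations of the conditions on $u^s$ and $\Delta u^s$), and then simply write $u^s=\tfrac{1}{2k^2}(u^s_+-u^s_-)$. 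Applying the classical expansion to $u^s_-$ and Lemma~\ref{lem3.12} to $u^s_+$ gives (\ref{3.3})--(\ref{3.35}) at once. In fact, if you unwind your own definitions you find $u^s_h=u^s+\tfrac{1}{2k^2}v^s=\tfrac{1}{2k^2}(\Delta+k^2)u^s=\tfrac{1}{2k^2}u^s_+$, so the ``main obstacle'' you flag---checking that $u^s_h$ satisfies the hypotheses of Lemma~\ref{lem3.12}---disappears: $u^s_h$ is manifestly a $C^2$ solution of $(\Delta-k^2)u^s_h=0$ satisfying (\ref{3.6}), with no series manipulation or convergence bookkeeping required.
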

\begin{remark}\label{remark3.6}
	Here, for later use, we note that if $u$ possesses the series expansion {\rm (\ref{3.3})} or {\rm (\ref{3.35})}, then by direct calculation we see that if $d=2$, 
	\ben
	  &&\Delta u(r,\theta)=\sum\limits_{m\in\Z}k^2\left(-a_mH_m^{(1)}(kr)+b_mK_m(kr)\right)e^{im\theta}, \\
	  &&\pa_nu|_{\pa B_R}=\sum\limits_{m\in\Z}k\left(a_mH_m^{(1)\prime}(kR)+b_mK_m'(kR)\right)e^{im\theta}, \\
	  &&\pa_n\Delta u|_{\pa B_R}=\sum\limits_{m\in\Z}k^3\left(-a_mH_m^{(1)\prime}(kR)+b_mK_m'(kR)\right)e^{im\theta}, 
	\enn
	and if $d=3$, 
	\ben
	  &&\Delta u^s(r,\theta,\varphi)=\sum\limits_{l\in\N}\sum_{m=-l}^{l}k^2\left(-a_l^mh_l^{(1)}(kr)+b_l^mk_l(kr)\right)Y_l^m(\theta,\varphi), \\
	  &&\pa_nu|_{\pa B_R}=\sum\limits_{l\in\N}\sum_{m=-l}^{l}k\left(a_l^mh_l^{(1)\prime}(kr)+b_l^mk_l'(kr)\right)Y_l^m(\theta,\varphi), \\
	  &&\pa_n\Delta u|_{\pa B_R}=\sum\limits_{l\in\N}\sum_{m=-l}^{l}k^3\left(-a_l^mh_l^{(1)\prime}(kr)+b_l^mk_l'(kr)\right)Y_l^m(\theta,\varphi). 
	\enn
\end{remark}
\begin{lemma}\label{lem3.7}
	Suppose $u^s\in C^4(\R^d\setminus\ov\Om)\cap C^3(\R^d\setminus\Om)$ satisfies $\Delta^2u^s-k^4u^s=0$ in $\R^d\setminus\ov\Om$ and the radiation condition {\rm (\ref{1.2})}, then 
	\ben
	  k^2\I\int_{\pa\Om}(u^s\pa_n\Delta\ov u^s+\Delta u^s\pa_n\ov u^s)ds=-\I\int_{\pa\Om}(\Delta u^s\pa_n\Delta\ov u^s+k^4u^s\pa_n\ov u^s)ds
	\enn
\end{lemma}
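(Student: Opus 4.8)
The plan is to compute the quantity
$\displaystyle \I\int_{\pa B_R}\big(\Delta u^s\,\pa_r\Delta\ov u^s + k^4 u^s\,\pa_r\ov u^s\big)\,ds$
in two independent ways and compare the results, exactly in the spirit of the identity already derived inside the proof of Lemma \ref{lem3.1}. On one hand, integration by parts over $B_R\setminus\ov\Om$ (using $\Delta^2 u^s-k^4u^s=0$) gives, as in that proof,
\ben
 \I\int_{\pa B_R}\big(\Delta u^s\,\pa_r\Delta\ov u^s + k^4 u^s\,\pa_r\ov u^s\big)\,ds
 = \I\int_{\pa\Om}\big(\Delta u^s\,\pa_n\Delta\ov u^s + k^4 u^s\,\pa_n\ov u^s\big)\,ds .
\enn
On the other hand, since $R$ is large, I would instead split $\Delta u^s\,\pa_r\Delta\ov u^s + k^4 u^s\,\pa_r\ov u^s = k^2\big(\Delta u^s\,\pa_r\ov u^s + u^s\,\pa_r\Delta\ov u^s\big) + \big(\Delta u^s - k^2 u^s\big)\pa_r\big(\Delta\ov u^s - k^2\ov u^s\big)$, which is an algebraic rearrangement (one checks the cross terms telescope), and observe that $w:=\Delta u^s - k^2 u^s$ is an outgoing solution of the Helmholtz equation. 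The standard Sommerfeld argument (as in the proof of Lemma \ref{lem3.1}, applied to $w$ alone) shows $\I\int_{\pa B_R}\ov w\,\pa_r w\,ds\to -k\int_{\pa B_R}|w|^2\,ds+o(1)$, but that is not quite what I want; rather, I want the middle term.

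So the cleaner route: apply the integration-by-parts identity $\I\int_{\pa B_R} v\,\pa_r\ov v\,ds = \I\int_{\pa\Om} v\,\pa_n\ov v\,ds + \I\int_{B_R\setminus\ov\Om} v\,\Delta\ov v\,dx$ (which is the displayed identity in the proof of Lemma \ref{lem3.1}) twice. First with $v=u^s$: then $\Delta\ov u^s$ appears in the volume term. Second, I need a companion identity mixing $u^s$ and $\Delta u^s$; integrate $\I\int_{\pa B_R}\big(\Delta u^s\,\pa_r\ov u^s + u^s\,\pa_r\Delta\ov u^s\big)\,ds$ by parts directly: $\int_{\pa B_R}\Delta u^s\,\pa_r\ov u^s\,ds = \int_{\pa\Om}\Delta u^s\,\pa_n\ov u^s\,ds + \int_{B_R\setminus\ov\Om}\big(\na\Delta u^s\cdot\na\ov u^s + \Delta^2 u^s\,\ov u^s\big)\,dx$ and $\int_{\pa B_R}u^s\,\pa_r\Delta\ov u^s\,ds = \int_{\pa\Om}u^s\,\pa_n\Delta\ov u^s\,ds + \int_{B_R\setminus\ov\Om}\big(\na u^s\cdot\na\Delta\ov u^s + u^s\,\Delta^2\ov u^s\big)\,dx$. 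Adding these, the two gradient terms combine and, after taking imaginary parts, the symmetric real parts cancel while the $\Delta^2$ terms are replaced using $\Delta^2 u^s=k^4 u^s$; one obtains
\ben
 \I\int_{\pa B_R}\big(\Delta u^s\,\pa_r\ov u^s + u^s\,\pa_r\Delta\ov u^s\big)\,ds
 = \I\int_{\pa\Om}\big(\Delta u^s\,\pa_n\ov u^s + u^s\,\pa_n\Delta\ov u^s\big)\,ds ,
\enn
because $\I\big(k^4|u^s|^2 + k^4\ov{|u^s|^2}\big)=0$ and $\I(\na\Delta u^s\cdot\na\ov u^s+\na u^s\cdot\na\Delta\ov u^s)=\I\,\pa_t(\cdots)=0$ pointwise is not literally true, so this step needs the volume integrals to be purely real — which they are, since the integrand is $2\Rt$ of a single expression. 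This gives the left-hand side of the lemma on $\pa\Om$, up to the factor $k^2$.

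Finally I combine the two displayed $\pa\Om$-identities. From the first (the Lemma \ref{lem3.1} identity) I have $\I\int_{\pa B_R}(\Delta u^s\,\pa_r\Delta\ov u^s + k^4 u^s\,\pa_r\ov u^s)\,ds = \I\int_{\pa\Om}(\Delta u^s\,\pa_n\Delta\ov u^s + k^4 u^s\,\pa_n\ov u^s)\,ds$; from the algebraic rearrangement together with the second identity and the Sommerfeld/Rellich behaviour of $w=\Delta u^s-k^2u^s$ showing $\I\int_{\pa B_R}\ov w\,\pa_r w\,ds=\I\int_{\pa\Om}\ov w\,\pa_n w\,ds$ as well, I get $\I\int_{\pa B_R}(\Delta u^s\,\pa_r\Delta\ov u^s + k^4u^s\,\pa_r\ov u^s)\,ds = k^2\,\I\int_{\pa\Om}(\Delta u^s\,\pa_n\ov u^s + u^s\,\pa_n\Delta\ov u^s)\,ds + \I\int_{\pa\Om}\ov w\,\pa_n w\,ds$. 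Equating the two right-hand sides and expanding $\I\int_{\pa\Om}\ov w\,\pa_n w\,ds = \I\int_{\pa\Om}(\Delta u^s-k^2u^s)(\pa_n\Delta\ov u^s-k^2\pa_n\ov u^s)\,ds$, the $k^2$-mixed terms are exactly $-k^2\I\int_{\pa\Om}(\Delta u^s\,\pa_n\ov u^s+u^s\,\pa_n\Delta\ov u^s)\,ds$, which cancels the explicit $k^2$-term, leaving precisely $\I\int_{\pa\Om}(\Delta u^s\,\pa_n\Delta\ov u^s+k^4u^s\,\pa_n\ov u^s)\,ds = -k^2\I\int_{\pa\Om}(u^s\,\pa_n\Delta\ov u^s+\Delta u^s\,\pa_n\ov u^s)\,ds$, which is the claim. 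The main obstacle I anticipate is bookkeeping: making sure every volume integral that I discard really is real-valued (so that $\I$ kills it) and that the $\pa B_R$-terms involving $w$ genuinely vanish in the limit via Lemma \ref{lem3.1} and the radiation condition \eqref{1.2}; no single estimate is hard, but the cross-term cancellations must be tracked carefully.
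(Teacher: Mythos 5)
Your argument has a structural gap that makes it collapse to a tautology. Every identity you invoke --- the Lemma \ref{lem3.1} identity for $A_S:=\I\int_{S}(\Delta u^s\pa_n\Delta\ov u^s+k^4u^s\pa_n\ov u^s)\,ds$, the companion identity for $B_S:=\I\int_{S}(\Delta u^s\pa_n\ov u^s+u^s\pa_n\Delta\ov u^s)\,ds$, and the identity for $C_S:=\I\int_{S}w\,\pa_n\ov w\,ds$ with $w=\Delta u^s-k^2u^s$ --- is an instance of the same fact: Green's theorem in $B_R\setminus\ov\Om$ with a purely real volume term, i.e.\ ``$\,(\cdot)_{\pa B_R}=(\cdot)_{\pa\Om}$''. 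Since your splitting $\Delta u^s\pa_r\Delta\ov u^s+k^4u^s\pa_r\ov u^s=k^2(\Delta u^s\pa_r\ov u^s+u^s\pa_r\Delta\ov u^s)+w\,\pa_r\ov w$ is a pointwise algebraic identity valid on \emph{any} surface, it already gives $A_S=k^2B_S+C_S$ for $S=\pa B_R$ and $S=\pa\Om$ separately; transporting each piece from $\pa B_R$ to $\pa\Om$ and equating therefore returns $A_{\pa\Om}=k^2B_{\pa\Om}+\bigl(A_{\pa\Om}-k^2B_{\pa\Om}\bigr)$, i.e.\ $0=0$. If you redo your final cancellation carefully you will see that the $k^2$-mixed terms of $\I\int_{\pa\Om}w\,\pa_n\ov w\,ds$ cancel the explicit $k^2$-term \emph{and} its remaining part $\I\int_{\pa\Om}(\Delta u^s\pa_n\Delta\ov u^s+k^4u^s\pa_n\ov u^s)\,ds$ cancels the left-hand side, leaving no information; the claimed conclusion $A_{\pa\Om}=-k^2B_{\pa\Om}$ is not produced.

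The lemma is equivalent to $\I\int_{\pa\Om}u^s_+\,\pa_n\ov u^s_+\,ds=0$ with $u^s_+=\Delta u^s+k^2u^s$ (just expand the product), and this cannot follow from Green-type bookkeeping alone: one must actually evaluate the corresponding integral over $\pa B_R$ and show it vanishes. That is the step your proposal is missing, and it is exactly where the modified-Helmholtz part enters. The paper transports $\I\int u^s_+\pa_n\ov u^s_+$ to $\pa B_R$ (real volume term, as in your computations) and then uses the series expansion of Theorem \ref{thm3.5} and Remark \ref{remark3.6}: on $\pa B_R$ the function $u^s_+$ involves only the real-valued radial functions $K_m(kR)$ (resp.\ $k_l(kR)$), so $\int_{\pa B_R}u^s_+\pa_n\ov u^s_+\,ds$ is a sum of terms $|b_m|^2K_m(kR)K_m'(kR)$ and hence real. (One could alternatively let $R\to\infty$ and use the exponential decay of $u^s_+$ and $\pa_ru^s_+$.) Your argument only ever manipulates the outgoing Helmholtz part $w=u^s_-$ and never uses any property specific to $u^s_+$, which is why it cannot close. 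The minor slips (the wrong volume term in one of the two Green identities, writing $\ov w\,\pa_n w$ for $w\,\pa_n\ov w$) are harmless by comparison, since the quantities involved are real or conjugate-symmetric, but the circularity is fatal.
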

\begin{proof}
	It suffices to show $\displaystyle\I\int_{\pa\Om}(\Delta u^s+k^2u^s)\pa_n(\Delta\ov u^s+k^2\ov u^s)ds=0$. Applying Green's theorem in $B_R\setminus\ov\Om$ we obtain that 
	\ben
	  &&\int_{\pa B_R}(\Delta u^s+k^2u^s)\pa_n(\Delta\ov u^s+k^2\ov u^s)ds-\int_{\pa\Om}(\Delta u^s+k^2u^s)\pa_n(\Delta\ov u^s+k^2\ov u^s)ds \\
	  =\;&&\int_{B_R\setminus\ov\Om}(\Delta u^s+k^2u^s)(\Delta^2\ov u^s+k^2\Delta\ov u^s)dx+\int_{B_R\setminus\ov\Om}|\na(\Delta u^s+k^2u^s)|^2dx \\
	  =\;&&\int_{B_R\setminus\ov\Om}k^2|\Delta u^s+k^2u^s|^2dx+\int_{B_R\setminus\ov\Om}|\na(\Delta u^s+k^2u^s)|^2dx, 
	\enn
	which implies 
	\ben
	  \I\int_{\pa B_R}(\Delta u^s+k^2u^s)\pa_n(\Delta\ov u^s+k^2\ov u^s)ds=\I\int_{\pa\Om}(\Delta u^s+k^2u^s)\pa_n(\Delta\ov u^s+k^2\ov u^s)ds. 
	\enn
	Now by Theorem \ref{thm3.5}, for $x\in\R^d\setminus B_R$, 
	\ben
	&&u^s(r,\theta)=\sum\limits_{m\in\Z}\left(a_mH_m^{(1)}(kr)+b_mK_m(kr)\right)e^{im\theta},~~~~{\rm if}~d=2, \\ 
	&&u^s(r,\theta,\varphi)=\sum\limits_{l\in\N}\sum_{m=-l}^{l}\left(a_l^mh_l^{(1)}(kr)+b_l^mk_l(kr)\right)Y_l^m(\theta,\varphi),~~~~{\rm if}~d=3, 
	\enn
	with $a_m,b_m,a_l^m,b_l^m$ constants. From Remark \ref{remark3.6} we see that 
	\ben
	  (\Delta u^s+k^2u^s)|_{\pa B_R}&&=
	  \left\{
	  \begin{array}{ll}
	  	\displaystyle\sum\limits_{m\in\Z}2k^2b_mK_m(kR)e^{im\theta},~~~&d=2,\\ [2mm]
	  	\displaystyle\sum\limits_{l\in\N}\sum_{m=-l}^{l}2k^2b_l^mk_l(kr)Y_l^m(\theta,\varphi),~&d=3, 
	  \end{array}
	  \right.
	  \\
	  \pa_n(\Delta u^s+k^2u^s)|_{\pa B_R}&&=
	   \left\{
	  \begin{array}{ll}
	  	\displaystyle\sum\limits_{m\in\Z}2k^3b_mK_m'(kR)e^{im\theta},~~~&d=2,\\ [2mm]
	  	\displaystyle\sum\limits_{l\in\N}\sum_{m=-l}^{l}2k^3b_l^mk_l'(kr)Y_l^m(\theta,\varphi),~&d=3, 
	  \end{array}
	  \right.
	\enn
	which further indicates 
	\ben
	  \int_{\pa B_R}(\Delta u^s+k^2u^s)\pa_n(\Delta\ov u^s+k^2\ov u^s)ds 
	  =
	  \left\{
	  \begin{array}{ll}
	  	\displaystyle2\pi R\sum\limits_{m\in\Z}4k^5|b_m|^2K_m(kR)\ov{K_m'(kR)},\\ [2mm]
	  	\displaystyle4\pi R^2\sum\limits_{l\in\N}\sum_{m=-l}^{l}4k^5|b_l^m|^2k_l(kr)\ov{k_l'(kr)}, 
	  \end{array}
	  \right.
	\enn
	for $d=2,3$. Since $K_m(r),k_l(r)$ take real values for $r>0$, we deduce that 
	\ben
	  \I\int_{\pa B_R}(\Delta u^s+k^2u^s)\pa_n(\Delta\ov u^s+k^2\ov u^s)ds=0. 
	\enn
	The proof is thus complete. 
\end{proof}
\begin{corollary}\label{cor3.8}
	Under the assumptions in Lemma {\rm \ref{lem3.7}}, if in addition 
	\ben
	  \I\int_{\pa\Om}(u^s\pa_n\Delta\ov u^s+\Delta u^s\pa_n\ov u^s)ds\leq0, 
	\enn
	then $\Delta u^s-k^2u^s=0$ in $\R^d\setminus\ov\Om$. 
\end{corollary}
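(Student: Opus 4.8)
The plan is to combine Theorem~\ref{thm3.4} with Lemma~\ref{lem3.7}, using the energy identity~(\ref{3.2}) from the proof of Lemma~\ref{lem3.1}. First I would recall that, by~(\ref{3.2}),
\ben
  \lim\limits_{R\rightarrow\infty}\int_{\pa B_R}\left[|\pa_r\Delta u^s|^2+k^2|\Delta u^s|^2+k^4\left(|\pa_r u^s|^2+k^2|u^s|^2\right)\right]ds
  =-2k\,\I\int_{\pa\Om}\left(\Delta u^s\pa_n\Delta\ov u^s+k^4u^s\pa_n\ov u^s\right)ds.
\enn
By Lemma~\ref{lem3.7}, the right-hand side equals $2k^3\,\I\int_{\pa\Om}(u^s\pa_n\Delta\ov u^s+\Delta u^s\pa_n\ov u^s)ds$, which by the hypothesis of the corollary is $\leq0$. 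On the other hand, the integrand on the left-hand side is pointwise nonnegative, so the limit is $\geq0$. Hence both sides vanish, and in particular
\ben
  \lim\limits_{R\rightarrow\infty}\int_{\pa B_R}\left(k^2|\Delta u^s|^2+k^6|u^s|^2\right)ds=0,
\enn
which, since $k>0$, gives $\lim_{R\rightarrow\infty}\int_{\pa B_R}(|\Delta u^s|^2+|u^s|^2)ds=0$.

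Next I would simply invoke Theorem~\ref{thm3.4} with $u=u^s$: the hypotheses there are precisely that $u^s\in C^4(\R^d\setminus\ov\Om)$ solves $\Delta^2u^s-k^4u^s=0$ and that the above boundary limit is zero, which we have just established. The conclusion $\Delta u^s-k^2u^s=0$ in $\R^d\setminus\ov\Om$ follows immediately, which is exactly the assertion of the corollary.

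The only points requiring a little care are bookkeeping rather than substance. One should make sure that the regularity $u^s\in C^4(\R^d\setminus\ov\Om)\cap C^3(\R^d\setminus\Om)$ assumed in Lemma~\ref{lem3.7} is enough to apply both~(\ref{3.2}) and Theorem~\ref{thm3.4} (it is, since Theorem~\ref{thm3.4} only needs $C^4$ in the open exterior domain), and that the sign and the powers of $k$ are tracked correctly when passing from~(\ref{3.2}) through Lemma~\ref{lem3.7} to the stated hypothesis $\I\int_{\pa\Om}(u^s\pa_n\Delta\ov u^s+\Delta u^s\pa_n\ov u^s)ds\leq0$. I do not anticipate a genuine obstacle here: the corollary is essentially the observation that the nonnegativity of the radiated energy, combined with the reciprocity identity of Lemma~\ref{lem3.7}, forces the far-field of $u^s$ to vanish, after which Rellich's lemma (already packaged as Theorem~\ref{thm3.4}) does the rest.
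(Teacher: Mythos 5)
Your proposal is correct and is essentially the paper's own argument: the paper's proof of Corollary \ref{cor3.8} is the one-line remark that it follows from (\ref{3.2}), Theorem \ref{thm3.4} and Lemma \ref{lem3.7}, and your write-up simply fills in the same sign/power-of-$k$ bookkeeping (the right-hand side of (\ref{3.2}) becoming $2k^3\,\I\int_{\pa\Om}(u^s\pa_n\Delta\ov u^s+\Delta u^s\pa_n\ov u^s)\,ds\leq 0$ against a nonnegative left-hand side) before invoking the Rellich-type Theorem \ref{thm3.4}.
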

\begin{proof}
	This would be a immediate consequence of (\ref{3.2}), Theorem \ref{thm3.4} and Lemma \ref{lem3.7}. 
\end{proof}
\begin{theorem}\label{thm3.9}
	Suppose $u^s\in C^4(\R^d\setminus\ov\Om)\cap C^3(\R^d\setminus\Om)$ satisfies $\Delta^2u^s-k^4u^s=0$ in $\R^d\setminus\ov\Om$ and the radiation condition {\rm (\ref{1.2})}, if further 
	\ben
	  \I\int_{\pa\Om}(u^s\pa_n\Delta\ov u^s+\Delta u^s\pa_n\ov u^s)ds\leq0~~{\rm and}~~\Rt\int_{\pa\Om}u^s\pa_n\ov u^s\geq0, 
	\enn
	then $u^s=0$ in $\R^d\setminus\ov\Om$. 
\end{theorem}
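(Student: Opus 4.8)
The plan is to combine Corollary \ref{cor3.8} with the energy identity (\ref{3.2}) to first reduce the biharmonic problem to a Helmholtz problem, and then use the classical uniqueness theorem for the exterior Dirichlet-type Helmholtz problem together with a unique continuation argument. Concretely, the first hypothesis $\I\int_{\pa\Om}(u^s\pa_n\Delta\ov u^s+\Delta u^s\pa_n\ov u^s)ds\leq0$ is exactly the hypothesis of Corollary \ref{cor3.8}, so we immediately obtain $\Delta u^s-k^2u^s=0$ in $\R^d\setminus\ov\Om$. In other words, $u^s$ is now a radiating solution of the Helmholtz equation (note that (\ref{1.2}) for $w=u^s$ is precisely the Sommerfeld radiation condition), so $u^s$ has the standard outgoing expansion (\ref{3.3})/(\ref{3.35}) with all the $b_m$ (resp. $b_l^m$) coefficients equal to zero.

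Next I would extract extra information from the second hypothesis $\Rt\int_{\pa\Om}u^s\pa_n\ov u^s\geq0$. Since now $\Delta u^s=k^2u^s$, Green's first identity applied to $u^s$ in $B_R\setminus\ov\Om$ gives
\ben
  \int_{\pa B_R}u^s\pa_r\ov u^sds-\int_{\pa\Om}u^s\pa_n\ov u^sds=\int_{B_R\setminus\ov\Om}|\na u^s|^2dx+\int_{B_R\setminus\ov\Om}u^s\Delta\ov u^sdx=\int_{B_R\setminus\ov\Om}(|\na u^s|^2+k^2|u^s|^2)dx,
\enn
so taking real parts and rearranging,
\ben
  \Rt\int_{\pa B_R}u^s\pa_r\ov u^sds=\Rt\int_{\pa\Om}u^s\pa_n\ov u^sds+\int_{B_R\setminus\ov\Om}(|\na u^s|^2+k^2|u^s|^2)dx\geq0.
\enn
On the other hand, I would compute $\Rt\int_{\pa B_R}u^s\pa_r\ov u^sds$ from the expansion of the radiating Helmholtz solution $u^s=\sum a_mH_m^{(1)}(kr)e^{im\theta}$ (resp. the three-dimensional analogue with $h_l^{(1)}$): using the Wronskian-type relation $\Rt\big(H_m^{(1)}(t)\ov{H_m^{(1)\prime}(t)}\big)<0$ for $t>0$ (equivalently $\Rt\big(h_l^{(1)}(t)\ov{h_l^{(1)\prime}(t)}\big)<0$), one gets that $\Rt\int_{\pa B_R}u^s\pa_r\ov u^sds$ is a sum of strictly negative multiples of $|a_m|^2$ (resp. $|a_l^m|^2$). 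Combining with the previous display forces all $a_m=0$ (resp. $a_l^m=0$), hence $u^s\equiv0$ in $\R^d\setminus B_R$.

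Finally, since $u^s$ solves the Helmholtz equation $\Delta u^s-k^2u^s=0$ in the connected open set $\R^d\setminus\ov\Om$ (recall $\Om$ has connected complement) and vanishes on the nonempty open subset $\R^d\setminus\ov{B_R}$, the unique continuation principle for the modified Helmholtz (elliptic) equation yields $u^s\equiv0$ throughout $\R^d\setminus\ov\Om$, which is the claim. The main obstacle I anticipate is the sign bookkeeping in the second hypothesis: one must be careful that the boundary term $\Rt\int_{\pa\Om}u^s\pa_n\ov u^sds$ has the right orientation of the normal and enters with a sign that is compatible with the (nonnegative) volume term, so that the combined inequality genuinely forces the coefficients to vanish; I would also double-check the strict sign of the relevant Wronskian expression for $H_m^{(1)}$ and $h_l^{(1)}$ (which follows from $H_m^{(1)}=J_m+iY_m$ and the Bessel Wronskian $J_m Y_m'-J_m'Y_m=2/(\pi t)$), since that strictness is what upgrades ``$\geq0$'' to ``all coefficients $=0$''.
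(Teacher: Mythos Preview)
Your first step---invoking Corollary~\ref{cor3.8} to obtain $\Delta u^s-k^2u^s=0$---is correct, but the very next sentence contains a genuine error: the equation $\Delta u^s-k^2u^s=0$ is the \emph{modified} Helmholtz equation, not the standard one. From Remark~\ref{remark3.6} one has $\Delta u^s-k^2u^s=\sum_m(-2k^2a_m)H_m^{(1)}(kr)e^{im\theta}$, so it is the coefficients $a_m$ (resp.\ $a_l^m$) that vanish, not the $b_m$; the remaining expansion is $u^s=\sum_mb_mK_m(kr)e^{im\theta}$ (resp.\ with $k_l$), and the Hankel Wronskian you plan to invoke is irrelevant. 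Your overall strategy is salvageable once this is corrected: since $K_m(t)>0$ and $K_m'(t)<0$ for $t>0$ (and likewise $k_l(t)>0$, $k_l'(t)<0$), the boundary term $\int_{\pa B_R}u^s\pa_r\ov u^s\,ds=2\pi Rk\sum_m|b_m|^2K_m(kR)K_m'(kR)$ is real and nonpositive, with equality only if every $b_m=0$; combining this with your Green's-identity inequality then forces $u^s\equiv0$ outside $B_R$, and analyticity (or unique continuation) finishes the job.

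For comparison, the paper's route avoids the series expansion entirely in this second step. After the same Green's identity, it writes
\ben
\int_{\pa B_R}\pa_nu^s\,\ov u^s\,ds=\int_{\pa B_R}(\pa_nu^s-iku^s)\ov u^s\,ds+ik\int_{\pa B_R}|u^s|^2\,ds,
\enn
observes that the real part of the second summand vanishes, and uses the radiation condition together with Lemma~\ref{lem3.1} to show the first summand tends to $0$ as $R\to\infty$. Taking $R\to\infty$ in the real-part identity then yields directly $\int_{\R^d\setminus\ov\Om}(|\na u^s|^2+k^2|u^s|^2)\,dx\leq0$, hence $u^s\equiv0$ throughout $\R^d\setminus\ov\Om$ without any appeal to unique continuation or special-function sign identities. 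Your (corrected) approach gives a concrete coefficient-by-coefficient argument; the paper's buys brevity and uniformity across dimensions.
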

\begin{proof}
	From Corollary \ref{cor3.8}, we have $\Delta u^s-k^2u^s=0$ in $\R^d\setminus\ov\Om$. Hence, it follows that 
	\ben
	  0~&&=\int_{B_R\setminus\ov\Om}(\Delta u^s-k^2u^s)\ov u^sdx \\
	  &&=\int_{\pa B_R}\pa_nu^s\ov u^sds-\int_{\pa\Om}\pa_nu^s\ov u^sds-\int_{B_R\setminus\ov\Om}(|\na u^s|^2+k^2|u^s|^2)dx \\
	  &&=\int_{\pa B_R}(\pa_nu^s-iku^s)\ov u^sds+ik\int_{\pa B_R}|u^s|^2ds-\int_{\pa\Om}\pa_nu^s\ov u^sds \\
	  &&\quad~-\int_{B_R\setminus\ov\Om}(|\na u^s|^2+k^2|u^s|^2)dx. 
	\enn
	Taking the real part of the equation yields 
	\ben
	  \int_{B_R\setminus\ov\Om}(|\na u^s|^2+k^2|u^s|^2)dx\leq\Rt\int_{\pa B_R}(\pa_nu^s-iku^s)\ov u^sds. 
	\enn
	Due to the radiation condition (\ref{1.2}) and Lemma \ref{lem3.1}, it is derived that 
	\ben
	  \lim\limits_{R\rightarrow\infty}\int_{\pa B_R}(\pa_nu^s-iku^s)\ov u^sds=0, 
	\enn
	which implies $u^s\in H^1(\R^d\setminus\ov\Om)$ and $\|u^s\|_{H^1(\R^d\setminus\ov\Om)}=0$. Thus $u^s=0$ in $\R^d\setminus\ov\Om$. 
\end{proof}
\begin{remark}\label{remark3.10}
	Since we already have $\Delta u^s-k^2u^s=0$ in $\R^d\setminus\ov\Om$, Theorem {\rm \ref{thm3.9}} still holds if the condition $\displaystyle\Rt\int_{\pa\Om}u^s\pa_n\ov u^s\geq0$ is replaced by $\displaystyle\Rt\int_{\pa\Om}\Delta u^s\pa_n\Delta\ov u^s\geq0$. 
\end{remark}

In the following, we define a new type far-field pattern for the biharmonic scattering. It is seen that the scattered field $u^s$ to problem (\ref{1.1}) can be spilted into two parts $u^s_-:=\Delta u^s-k^2u^s$ and $u^s_+:=\Delta u^s+k^2u^s$. For the first part $u^s_-$, it is the radiating solution to the Helmholtz equation and thus has the asymptotic behavior (see \cite{FD06,DR13})
\be\label{3.4}
  u^s_-(x)=\frac{e^{ik|x|}}{|x|^{\frac{d-1}{2}}}\left\{u^s_{-,\infty}(\hat x)+O\left(\frac{1}{|x|}\right)\right\},~~~|x|\rightarrow\infty, 
\en
uniformly in all directions $\hat x=x/|x|$, where $u^s_{-,\infty}$ is the well-known acoustic far-field pattern for $u^s_-$ and has the representation 
\be\label{3.5}
  u^s_{-,\infty}(\hat x)=\frac{ie^{-i\frac{d-1}{4}\pi}k^{\frac{d-3}{2}}}{2(2\pi)^{\frac{d-1}{2}}}\int_{\pa\Om}\left(u^s_-(y)\frac{\pa e^{-ik\hat x\cdot y}}{\pa n(y)}-\frac{\pa u^s_-}{\pa n}(y)e^{-ik\hat x\cdot y}\right)ds(y). 
\en
As for the second part $u^s_+$, it is the scattering solution of the modified Helmholtz equation and thus has some similar properties to $u^s_-$. 
\begin{theorem}\label{thm3.11}
	Suppose $v^s\in C^2(\R^d\setminus\ov\Om)\cap C^1(\R^d\setminus\Om)$ satisfies $\Delta v^s-k^2v^s=0$ in $\R^d\setminus\ov\Om$ and the classical Sommerfeld radiation condition {\rm (\ref{3.6})}, then it follows that 
	\be\label{3.7}
	  v^s(x)=\frac{e^{-k|x|}}{|x|^{\frac{d-1}{2}}}\left\{v^s_{\infty}(\hat x)+O\left(\frac{1}{|x|}\right)\right\},~~~|x|\rightarrow\infty, 
	\en
	uniformly in all directions $\hat x$ with 
	\be\label{3.8}
	   v^s_{\infty}(\hat x)=\frac{k^{\frac{d-3}{2}}}{2(2\pi)^{\frac{d-1}{2}}}\int_{\pa\Om}\left(v^s(y)\frac{\pa e^{k\hat x\cdot y}}{\pa n(y)}-\frac{\pa v^s}{\pa n}(y)e^{k\hat x\cdot y}\right)ds(y). 
	\en
\end{theorem}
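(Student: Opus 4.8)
The plan is to mirror the classical derivation of the far-field asymptotics for radiating solutions of the Helmholtz equation (as in \cite{FD06,DR13}), but with the modified Helmholtz fundamental solution replacing the Helmholtz one. First I would establish a Green-type representation for $v^s$ in $\R^d\setminus\ov\Om$: since $v^s$ solves $\Delta v^s-k^2v^s=0$ and satisfies the Sommerfeld radiation condition \eqref{3.6}, an argument entirely analogous to the one used in Theorem \ref{thm3.2} (integrate $v^s$ against the fundamental solution of $\Delta-k^2$ over $B_R\setminus\ov\Om$ minus a small ball around $x$, let the ball shrink and $R\to\infty$, using \eqref{2.3} and the radiation condition to kill the boundary term on $\pa B_R$) gives
\ben
  v^s(x)=\int_{\pa\Om}\left(v^s(y)\frac{\pa \Phi_k(x,y)}{\pa n(y)}-\frac{\pa v^s}{\pa n}(y)\Phi_k(x,y)\right)ds(y),\qquad x\in\R^d\setminus\ov\Om,
\enn
where $\Phi_k$ denotes the fundamental solution of the modified Helmholtz operator (for $d=3$, $\Phi_k(x,y)=e^{-k|x-y|}/(4\pi|x-y|)$; for $d=2$ it is a multiple of $K_0(k|x-y|)$). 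Alternatively, since the exponential decay $e^{-k|x|}$ in \eqref{3.7} already forces integrability, one may instead invoke Lemma \ref{lem3.12} and work directly from the series expansion, but the Green's representation route is cleaner for extracting the explicit constant in \eqref{3.8}.

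Next I would expand $\Phi_k(x,y)$ and $\pa\Phi_k(x,y)/\pa n(y)$ for $|x|\to\infty$ with $y$ ranging over the compact set $\pa\Om$. Writing $|x-y|=|x|-\hat x\cdot y+O(1/|x|)$ uniformly for $y\in\pa\Om$, one gets in three dimensions
\ben
  \Phi_k(x,y)=\frac{e^{-k|x|}}{4\pi|x|}\left\{e^{k\hat x\cdot y}+O\left(\frac{1}{|x|}\right)\right\},\qquad
  \frac{\pa\Phi_k(x,y)}{\pa n(y)}=\frac{e^{-k|x|}}{4\pi|x|}\left\{\frac{\pa e^{k\hat x\cdot y}}{\pa n(y)}+O\left(\frac{1}{|x|}\right)\right\},
\enn
and in two dimensions the large-argument asymptotics \eqref{2.3} of $K_0$ (and $K_1$, via the recurrence relation used for $\na G_k$) produce the analogous expansion with the prefactor $\tfrac{1}{2}\sqrt{1/(2\pi k)}\,k^{-1}\cdot\sqrt{k}=\ldots$; collecting the $d$-dependent constants yields exactly $k^{(d-3)/2}/\bigl(2(2\pi)^{(d-1)/2}\bigr)$. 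Substituting these expansions into the representation formula and factoring out $e^{-k|x|}/|x|^{(d-1)/2}$ gives \eqref{3.7} with $v^s_\infty$ as in \eqref{3.8}; the uniformity in $\hat x$ follows because all the $O(1/|x|)$ terms are uniform for $y\in\pa\Om$ and $\pa\Om$ is compact.

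The main obstacle is bookkeeping of the $d$-dependent normalization constant in \eqref{3.8} — in particular getting the two-dimensional constant right requires carefully tracking the factors coming from $K_0(z)\sim\sqrt{\pi/(2z)}e^{-z}$ together with the factor $i/(8k^2)\cdot(2i/\pi)$ hidden in the definition of $G_k$, versus the normalization of the modified Helmholtz fundamental solution $\Phi_k$ one chooses. Beyond that, the argument is routine: it is the standard stationary-phase-free far-field expansion, and the regularity hypotheses $v^s\in C^2(\R^d\setminus\ov\Om)\cap C^1(\R^d\setminus\Om)$ are exactly what is needed to justify the Green's representation and the term-by-term differentiation of $\Phi_k$ under the integral. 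One small point worth stating explicitly is that, unlike the Helmholtz case, there is no issue of whether the far-field pattern can vanish identically to force $v^s\equiv0$ — that direction (a Rellich-type statement) is not claimed here, so the proof genuinely only needs the one-directional asymptotic expansion.
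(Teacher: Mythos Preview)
Your proposal is correct and follows precisely the approach the paper indicates: its proof of Theorem~\ref{thm3.11} is a single sentence stating that ``the conclusion follows directly from the representation theorem for $v^s$, see details in \cite{FD06,DR13}.'' You have simply written out what that sentence means --- Green's representation of $v^s$ via the modified Helmholtz fundamental solution, followed by the large-$|x|$ expansion of that fundamental solution --- and your bookkeeping of the dimensional constant in \eqref{3.8} is accurate in both $d=2$ and $d=3$.
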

\begin{proof}
	Similar to the acoustic scattering, the conclusion follows directly from the representation theorem for $v^s$, see details in \cite{FD06,DR13}. 
\end{proof}
\begin{theorem}\label{thm3.13}
	Under the conditions in Theorem {\rm \ref{thm3.11}}, if $v^s_{\infty}=0$, then $v^s=0$ in $\R^d\setminus\ov\Om$. 
\end{theorem}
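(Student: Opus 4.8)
The plan is to mimic the classical uniqueness argument for the exterior Helmholtz equation, now adapted to the modified Helmholtz operator $\Delta - k^2$. The starting point is the hypothesis $v^s_\infty = 0$, which by the asymptotic expansion (\ref{3.7}) of Theorem \ref{thm3.11} forces the leading-order decay of $v^s$ to vanish; combined with the series representation of $v^s$ in $\R^d\setminus B_R$ from Lemma \ref{lem3.12}, where $v^s$ is expanded in the Macdonald functions $K_m(kr)e^{im\theta}$ (or $k_l(kr)Y_l^m$ in three dimensions), this should pin down all the coefficients $a_m$ (resp. $a_l^m$) to be zero. The key fact here is the asymptotics (\ref{2.3}) and (\ref{2.8}), which show $K_m(kr)\sim\sqrt{\pi/(2kr)}\,e^{-kr}$ with no nontrivial cancellation among different modes: the far-field coefficient $v^s_\infty$ is, up to a nonzero constant, the sum of the $a_m$'s weighted by spherical harmonics, so $v^s_\infty = 0$ gives each $a_m = 0$. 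Hence $v^s \equiv 0$ in $\R^d\setminus B_R$.

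Once $v^s$ vanishes on the open set $\R^d\setminus\overline{B_R}$, I would invoke analyticity: solutions of $\Delta v^s - k^2 v^s = 0$ are real-analytic in their domain of definition (the modified Helmholtz operator is elliptic with analytic, indeed constant, coefficients), and $\R^d\setminus\overline\Om$ is connected by the standing assumption that $\Om$ has connected complement. Therefore $v^s$, vanishing on a nonempty open subset of the connected open set $\R^d\setminus\overline\Om$, must vanish identically there by the unique continuation principle (or simply the identity theorem for real-analytic functions). This yields $v^s = 0$ in $\R^d\setminus\overline\Om$, which is the assertion.

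An alternative, more self-contained route avoids analytic continuation: one can prove directly a Rellich-type lemma for the modified Helmholtz equation, asserting that a solution of $\Delta v^s - k^2 v^s = 0$ in $\R^d\setminus\overline\Om$ satisfying $\int_{\pa B_R}|v^s|^2\,ds\to 0$ must vanish. This is in the same spirit as Theorem \ref{thm3.4} and \cite[Theorem 3.5]{FD06}, but is actually easier because the modified Helmholtz equation has no propagating modes: an energy identity over $B_R\setminus\overline\Om$ using Green's theorem, together with the exponential decay of all the $K_m$ (hence $L^2$-integrability of $v^s$ and $\nabla v^s$ up to infinity), forces $\|\nabla v^s\|_{L^2}^2 + k^2\|v^s\|_{L^2}^2 \le 0$ on the exterior of $B_R$. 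Since $v^s_\infty=0$ gives precisely $\int_{\pa B_R}|v^s|^2\,ds = o(1)$ via (\ref{3.7}), this applies and again gives $v^s=0$ outside $B_R$, after which one still passes to all of $\R^d\setminus\overline\Om$ by unique continuation or by shrinking $R$.

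The main obstacle, and the step deserving the most care, is the passage from "$v^s_\infty=0$" to "$v^s\equiv 0$ near infinity." One must be sure that the far-field map $a_m \mapsto v^s_\infty$ (or its three-dimensional analogue) is injective, i.e. that distinct Macdonald modes cannot conspire to produce a vanishing far-field pattern. This follows from the clean one-term asymptotics (\ref{2.3})--(\ref{2.8}) and the orthogonality of $\{e^{im\theta}\}$ (resp. $\{Y_l^m\}$) on the sphere, but it should be written out so that the convergence of the series and the legitimacy of term-by-term asymptotics are justified; after that the remainder of the proof is a routine appeal to unique continuation and connectedness of $\R^d\setminus\overline\Om$.
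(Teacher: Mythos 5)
Your primary route is essentially the paper's proof: expand $v^s$ outside $B_R$ in Macdonald modes via Lemma \ref{lem3.12}, use the one-term asymptotics \eqref{2.3} and \eqref{2.8} to see that a vanishing far field kills every coefficient, and then extend $v^s=0$ to all of $\R^d\setminus\ov\Om$ by analyticity. The one technical worry you raise --- justifying term-by-term asymptotics in the series for $v^s_\infty$ --- is exactly what the paper sidesteps with Parseval: it writes $\int_{\pa B_R}|v^s|^2\,ds$ as a sum of nonnegative terms $\propto |a_m|^2|K_m(kR)|^2$, notes that \eqref{3.7} with $v^s_\infty=0$ gives the quantitative rate $R\,e^{2kR}\int_{\pa B_R}|v^s|^2\,ds\to 0$, and then bounds each single mode by the whole sum, so that $R^2e^{2kR}|a_m|^2|K_m(kR)|^2\to 0$ forces $a_m=0$ from the asymptotics of $K_m$ alone. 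If you write up your version, you should adopt this Parseval step rather than trying to sum the mode-wise error terms, whose implicit constants grow with $m$.

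Your ``alternative, more self-contained route'' is, however, incorrect as stated. A Rellich-type lemma for the modified Helmholtz equation under the hypothesis $\int_{\pa B_R}|v^s|^2\,ds\to 0$ is false: \emph{every} radiating solution, e.g.\ $v^s=K_0(kr)$ in two dimensions, satisfies $\int_{\pa B_R}|v^s|^2\,ds=O(e^{-2kR})\to 0$ without vanishing, so the mere $o(1)$ decay you extract from \eqref{3.7} carries no information. What $v^s_\infty=0$ actually buys is decay strictly faster than the generic rate $e^{-kR}R^{-(d-1)/2}$, and it is this excess (captured by $R\,e^{2kR}\int_{\pa B_R}|v^s|^2\,ds\to 0$) that the argument must exploit. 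The proposed energy identity also does not deliver the claimed sign: Green's theorem on $\R^d\setminus B_{R_0}$ gives $\int(|\na v^s|^2+k^2|v^s|^2)\,dx=-\Rt\int_{\pa B_{R_0}}\ov{v^s}\,\pa_r v^s\,ds$, whose right-hand side is nonnegative for any combination of decaying modes (since $K_m>0$ and $K_m'<0$), so no contradiction arises. Drop that route and keep the first one, with the Parseval refinement.
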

\begin{proof}
	By Lemma \ref{lem3.12} and Parseval's equality, we see that 
	\ben
	  \int_{\pa B_R}|v^s|^2ds=
	  \left\{
	  \begin{array}{ll}
	  	\displaystyle2\pi R\sum\limits_{m\in\Z}|a_m|^2|K_m(kR)|^2,~~~&d=2,\\ [2mm]
	  	\displaystyle4\pi R^2\sum\limits_{l\in\N}\sum_{m=-l}^{l}|a_l^m|^2|k_l(kr)|^2,~&d=3. 
	  \end{array}
	  \right.
	\enn
	Further, it is obtained from (\ref{3.7}) that 
	\ben
	  \lim\limits_{R\rightarrow\infty}Re^{2kR}\int_{\pa B_R}|v^s|^2ds=0, 
	\enn
	which indicates that 
	\ben
	  \left\{
	  \begin{array}{ll}
	  	\displaystyle\lim\limits_{R\rightarrow\infty}R^2e^{2kR}|a_m|^2|K_m(kR)|^2=0,~~~&d=2,\\ [3mm]
	  	\displaystyle\lim\limits_{R\rightarrow\infty}R^3e^{2kR}|a_l^m|^2|k_l(kr)|^2=0,~&d=3. 
	  \end{array}
	  \right.
	\enn
	Due to the asymptotic expansion (\ref{2.3}) of $K_m$ and (\ref{2.8}), we conculde that $a_m=0$ for all $m$ or $a_l^m=0$ for all $m,l$. Therefore, $v^s=0$ outside a sufficiently large disk (ball) and hence $v^s=0$ in $\R^d\setminus\ov\Om$ by analyticity. 
\end{proof}

Now we know that $u^s_+$ has the expansion in the form of (\ref{3.7}) with $u^s_{+,\infty}$. We give our definition for the far-field pattern of the biharmonic scattering field. 
\begin{definition}\label{def3.14}
	Suppose $u^s\in C^4(\R^d\setminus\ov\Om)\cap C^3(\R^d\setminus\Om)$ satisfies $\Delta^2u^s-k^4u^s=0$ in $\R^d\setminus\ov\Om$ and the radiation condition {\rm (\ref{1.2})}. Let $u^s_-:=\Delta u^s-k^2u^s$ and $u^s_+:=\Delta u^s+k^2u^s$. Then 
	\ben
	u^s_+(x)=\frac{e^{-k|x|}}{|x|^{\frac{d-1}{2}}}\left\{u^s_{+,\infty}(\hat x)+O\left(\frac{1}{|x|}\right)\right\},~~~|x|\rightarrow\infty, 
	\enn
	uniformly in all directions $\hat x$ with 
	\ben
	u^s_{+,\infty}(\hat x)=\frac{k^{\frac{d-3}{2}}}{2(2\pi)^{\frac{d-1}{2}}}\int_{\pa\Om}\left(u^s_+(y)\frac{\pa e^{k\hat x\cdot y}}{\pa n(y)}-\frac{\pa u^s_+}{\pa n}(y)e^{k\hat x\cdot y}\right)ds(y), 
	\enn
	and $u^s_-$ has the asymptotic behavior {\rm (\ref{3.4})} and {\rm (\ref{3.5})}. Define $(u^s_{+,\infty},u^s_{-,\infty})$ to be the far-field pattern of $u^s$. 
\end{definition}
\begin{theorem}\label{thm3.15}
	Under the assumptions in Definition {\rm \ref{def3.14}}, if $(u^s_{+,\infty},u^s_{-,\infty})=(0,0)$, then $u^s=0$ in $\R^d\setminus\ov\Om$. 
\end{theorem}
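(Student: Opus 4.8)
The plan is to leverage the decomposition $u^s = \frac{1}{2k^2}(u^s_+ - u^s_-)$ together with the uniqueness results already established for the Helmholtz and modified Helmholtz parts. First I would observe that $u^s_- = \Delta u^s - k^2 u^s$ solves the Helmholtz equation $(\Delta + k^2)u^s_- = (\Delta^2 - k^4)u^s = 0$ in $\R^d\setminus\ov\Om$ and, by Lemma \ref{lem3.1} and the radiation condition (\ref{1.2}) applied to $w = u^s, \Delta u^s$, satisfies the classical Sommerfeld radiation condition; likewise $u^s_+ = \Delta u^s + k^2 u^s$ solves the modified Helmholtz equation $(\Delta - k^2)u^s_+ = 0$ and satisfies (\ref{3.6}). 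Thus both parts fall into the scope of the respective far-field uniqueness statements.

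Next I would invoke the hypothesis $(u^s_{+,\infty}, u^s_{-,\infty}) = (0,0)$. Since $u^s_{-,\infty} = 0$, Rellich's Lemma for the Helmholtz equation (the standard acoustic result, cf.\ \cite[Theorem 3.5]{FD06}) gives $u^s_- = 0$ in $\R^d\setminus\ov\Om$, i.e.\ $\Delta u^s - k^2 u^s = 0$ there. Since $u^s_{+,\infty} = 0$, Theorem \ref{thm3.13} (applied with $v^s = u^s_+$) gives $u^s_+ = 0$ in $\R^d\setminus\ov\Om$, i.e.\ $\Delta u^s + k^2 u^s = 0$ there. Subtracting the two identities yields $2k^2 u^s = (\Delta u^s + k^2 u^s) - (\Delta u^s - k^2 u^s) = u^s_+ - u^s_- = 0$, and since $k > 0$ we conclude $u^s = 0$ in $\R^d\setminus\ov\Om$.

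I do not expect any serious obstacle here: the theorem is essentially a bookkeeping consequence of the two vanishing results for the split pieces, and the only mild point to check is that the two subproblems genuinely satisfy the radiation conditions required by Theorem \ref{thm3.13} and the acoustic Rellich lemma — which follows at once from Lemma \ref{lem3.1} and the definition (\ref{1.2}) of the biharmonic radiation condition imposed on both $u^s$ and $\Delta u^s$. One could alternatively route the $u^s_-$ part through Theorem \ref{thm3.4} and Theorem \ref{thm3.9}, but invoking $u^s_{-,\infty}=0$ directly with the classical Rellich lemma is the cleanest; either way the final step is the trivial algebraic elimination of $u^s$ from the two Helmholtz-type identities.
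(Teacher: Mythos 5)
Your proposal is correct and is exactly the argument the paper intends: its proof of Theorem \ref{thm3.15} reads in full ``Straightly employ Rellich's Lemma and Theorem \ref{thm3.13},'' which is precisely your application of the acoustic Rellich lemma to $u^s_-$ and of Theorem \ref{thm3.13} to $u^s_+$, followed by the algebraic elimination $2k^2u^s=u^s_+-u^s_-$. Your additional check that both split pieces inherit the required radiation conditions from (\ref{1.2}) is a useful detail the paper leaves implicit.
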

\begin{proof}
	Straightly employ Rellich's Lemma and Theorem \ref{thm3.13}. 
\end{proof}
\begin{remark}\label{remark3.16}
	We note that all the results in this section can be extended to proper Sobolev spaces by the standard density arguments. Further, if only $u^s_{+,\infty}=0$ or $u^s_{-,\infty}=0$, in general it can not be deduced that $u^s=0$. Nevertheless, since $u^s_+$ decays exponentially at infinity, is seems have no sense to include $u^s_{+,\infty}$ in the far-field pattern of $u^s$, which is, however, shown to be necessary and useful at section \ref{sec6}. 
\end{remark}

\section{Well-posedness}\label{sec3.5}
In this section, we want to obtain the well-posedness of biharmonic obstacle scattering problem with Dirichlet boundary condition, while the uniqueness is given by Theorem \ref{thm3.9}. Basically, we turn problem (\ref{1.1}) into a couple of Helmholtz and modified Helmholtz equations and develop corresponding boundary integral equation method. 
Different from \cite{HP23}, the problem is then reduced to equivalent boundary integral equations by expressing the solutions as combined single- and double-layer potentials. The Fredholm properties of the relating integral operators are derived using the theory of pseudodifferential operators and then the well-posedness follows. 


Denote by $\Phi_k$ the fundamental solution of the Helmholtz equation in $\R^d$ with wave number $k$, which is 
\ben
   \Phi_k(x,y)=
   \left\{
   \begin{array}{ll}
   	\displaystyle\frac{i}{4}H_0^{(1)}(k|x-y|),~~~&d=2,\\ [3mm]
   	\displaystyle\frac{e^{ik|x-y|}}{4\pi |x-y|},~&d=3. 
   \end{array}
   \right.
\enn
We introduce the famous single- and double-layer potentials in the acoustic scattering, 
\ben
(SL_{k}\varphi)(x)&&:=\int_{\pa\Om}\Phi_k(x,y)\varphi(y)ds(y),~~~x\in\R^d\setminus\pa\Om, \\
(DL_{k}\varphi)(x)&&:=\int_{\pa\Om}\frac{\pa\Phi_k(x,y)}{\pa\nu(y)}\varphi(y)ds(y),~~~x\in\R^d\setminus\pa\Om. 
\enn
Also, we give the definitions of the associated boundary integral operators, 
\ben
(S_{k}\varphi)(x)&&:=\int_{\pa\Om}\Phi_k(x,y)\varphi(y)ds(y),~~~x\in\pa\Om, \\
(K_{k}\varphi)(x)&&:=\int_{\pa\Om}\frac{\pa\Phi_k(x,y)}{\pa\nu(y)}\varphi(y)ds(y),~~~x\in\pa\Om, \\
(K'_{k}\varphi)(x)&&:=\int_{\pa\Om}\frac{\pa\Phi_k(x,y)}{\pa\nu(x)}\varphi(y)ds(y),~~~x\in\pa\Om, \\
(T_{k}\varphi)(x)&&:=\frac{\pa}{\pa\nu(x)}\int_{\pa\Om}\frac{\pa\Phi_k(x,y)}{\pa\nu(y)}\varphi(y)ds(y),~~~x\in\pa\Om. 
\enn
Further, we remind the volume potential 
\ben
  (\wid{SL}_{k}\varphi)(x)&&:=\int_{\Om}\Phi_k(x,y)\varphi(y)dy,~~~x\in\R^d. 
\enn
The basic properties of these operators can be found in \cite{FD06,DR13}. 

We spilt the biharmonic equation into a couple of Helmholtz and modified Helmholtz equations. For $(u^s_+,u^s_-)$ we see that problem (\ref{1.1}) with $\mathcal{B}=\mathcal{B}_D$ becomes 
\be\label{5.1}
\left\{
\begin{array}{ll}
	\Delta u^s_+-k^2u^s_+=0,~~\Delta u^s_-+k^2u^s_-=0~~~&{\rm in}~\R^d\setminus\ov\Om, \\[2mm]
	u^s_+-u^s_-=2k^2f,~~\pa_nu^s_+-\pa_nu^s_-=2k^2g~~~&{\rm on}~\pa\Om, \\ [2mm]
	\pa_rw-ikw=o\left(r^{-\frac{d-1}{2}}\right),~~r=|x|\rightarrow\infty,~&w=u^s_\pm, 
\end{array}
\right.
\en
which is an interior tranmission problem in the exterior domain and can not be handled by the usual methods. It is also remarked that problem \eqref{1.1} with some boundary condition, such as $\mathcal{B}=\mathcal{B}_N$, after decomposition can be easily solved by the classical methods in the acoustic scattering. Here we require the boundary data $(f,g)\in H^{3/2}(\pa\Om)\times H^{1/2}(\pa\Om)$. We want to seek the solutions $u^s_\pm$ of problem (\ref{5.1}) in the form of 
\ben
  u^s_+=SL_{ik}\varphi-DL_{ik}\psi+i\eta SL_{ik}(S_0^2\psi),~~~u^s_-=SL_k\varphi-DL_k\psi~~{\rm in}~\R^d\setminus\ov\Om. 
\enn
with density $(\varphi,\psi)\in H^{-3/2}(\pa\Om)\times H^{-1/2}(\pa\Om)$ and the real parameter $\eta\neq0$. Now problem (\ref{5.1}) can be equivalently reduced to the following boundary integral equations on $\pa\Om$ (note that the mapping properties and the jump relations of these operators in weaker spaces can be found in \cite{AH13}): 
\be\label{5.2}
\left(
\begin{array}{cc}
	S_{ik}-S_k & -K_{ik}+K_k+i\eta S_{ik}S_0^2  \\
	-K_{ik}'+K_k' & T_{ik}-T_k+i\eta K_{ik}'S_0^2-\frac{i\eta}{2}S_0^2
\end{array}
\right)
\left(
\begin{array}{c}
	\varphi \\
	\psi
\end{array}
\right)
=
\left(
\begin{array}{c}
	2k^2f \\
	-2k^2g 
\end{array}
\right).
\en
Denote by $M(k)$ the matrix operator in the left hand side of (\ref{5.2}), and set 
\ben
  Z(k)=
  \left(
  \begin{array}{cc}
  	S_{ik}-S_k & -K_{ik}+K_k \\
  	-K_{ik}'+K_k' & T_{ik}-T_k
  \end{array}
  \right).
\enn
 From \cite{AH13} it is known that $Z(k):H^{-3/2}(\pa\Om)\times H^{-1/2}(\pa\Om)\rightarrow H^{3/2}(\pa\Om)\times H^{1/2}(\pa\Om)$ is bounded. Further, since $\pa\Om\in C^{3,\alpha}$, we deduce from \cite{AK89} that $M(k)-Z(k):H^{-3/2}(\pa\Om)\times H^{-1/2}(\pa\Om)\rightarrow H^{3/2}(\pa\Om)\times H^{1/2}(\pa\Om)$ is compact. We first consider the uniqueness of equation (\ref{5.2}). 
\begin{theorem}\label{thm5.1}
	The equation {\rm \eqref{5.2}} has at one solution, i.e., the operator $M(k)$ is injective. 
\end{theorem}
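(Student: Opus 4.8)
The plan is to show injectivity of $M(k)$ by the standard layer-potential argument: assume $(\varphi,\psi)$ lies in the kernel, build the corresponding potentials, show they solve a homogeneous exterior problem, invoke the uniqueness result already available (Theorem~\ref{thm3.9}), and then exploit the jump relations to conclude $(\varphi,\psi)=(0,0)$. Concretely, suppose $M(k)(\varphi,\psi)^{\top}=(0,0)^{\top}$ and set
\ben
  u^s_+=SL_{ik}\varphi-DL_{ik}\psi+i\eta SL_{ik}(S_0^2\psi),\qquad u^s_-=SL_k\varphi-DL_k\psi\quad\text{in }\R^d\setminus\pa\Om,
\enn
and $u^s:=\tfrac{1}{2k^2}(u^s_+-u^s_-)$ in $\R^d\setminus\ov\Om$, so that $u^s_\pm=\Delta u^s\pm k^2u^s$. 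Then $u^s_+$ solves $\Delta u^s_+-k^2u^s_+=0$, $u^s_-$ solves $\Delta u^s_-+k^2u^s_-=0$ in $\R^d\setminus\ov\Om$, both satisfy the Sommerfeld-type condition, and hence $\Delta^2u^s-k^4u^s=0$ with the radiation condition \eqref{1.2}. Since $(\varphi,\psi)$ is in the kernel of $M(k)$, the two equations in \eqref{5.2} say precisely that the exterior traces $u^s_+-u^s_-$ and $\pa_n u^s_+-\pa_n u^s_-$ vanish on $\pa\Om$, i.e. $u^s=0$ and $\pa_n u^s=0$ on $\pa\Om$ from the outside.

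With homogeneous Dirichlet data $u^s=\pa_n u^s=0$ on $\pa\Om$, the boundary integrals $\I\int_{\pa\Om}(u^s\pa_n\Delta\ov u^s+\Delta u^s\pa_n\ov u^s)ds$ and $\Rt\int_{\pa\Om}u^s\pa_n\ov u^s\,ds$ are both zero, so Theorem~\ref{thm3.9} yields $u^s=0$ in $\R^d\setminus\ov\Om$, and therefore $u^s_+=u^s_-=0$ in $\R^d\setminus\ov\Om$. Next I would pass to the interior domain $\Om$. The potentials $u^s_\pm$ are also defined there and solve the same (modified) Helmholtz equations in $\Om$; by the jump relations across $\pa\Om$, the interior Cauchy data of $u^s_-$ are $u^s_-|_-=[u^s_-]=-\psi$ (up to sign/normalization from the double-layer jump) and $\pa_n u^s_-|_- = \pa_n u^s_-|_+ - [\pa_n u^s_-]$, with the exterior data already known to vanish; similarly for $u^s_+$. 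I would then use a uniqueness statement for the interior modified-Helmholtz problem for $u^s_-$ to kill it in $\Om$: because $\Delta u^s_- + k^2 u^s_-=0$ with the ``wrong-sign'' Laplacian is the genuinely modified Helmholtz operator $\Delta - (ik)^2$ — its interior Dirichlet problem has no real eigenvalues, so $u^s_-=0$ in $\Om$ once its interior Dirichlet trace is pinned down; combining interior and exterior vanishing with the jump relations then forces $\psi=0$ and $\varphi=0$ on $\pa\Om$. The analogous bookkeeping with $u^s_+=SL_{ik}\varphi-DL_{ik}\psi+i\eta SL_{ik}(S_0^2\psi)$ — where the parameter $\eta\neq0$ is precisely what rules out interior eigenvalues of $\Delta-k^2$, exactly as the usual combined-potential trick does in acoustic scattering — gives the matching consistency, and the two together yield $(\varphi,\psi)=(0,0)$.

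The main obstacle, and the step requiring genuine care rather than routine invocation, is the interior argument that converts ``$u^s_\pm=0$ in the exterior'' into ``$\varphi=\psi=0$''. One must track the jump relations of $SL$, $DL$ and their normal derivatives (including the extra terms $i\eta SL_{ik}(S_0^2\psi)$ and $T_k$, and the $-\tfrac{i\eta}{2}S_0^2$ correction visible in \eqref{5.2}) with the correct signs and coefficients, in the weaker Sobolev spaces $H^{-3/2}\times H^{-1/2}$, as provided by \cite{AH13,AK89}; and one must verify that the interior problems for $u^s_+$ (operator $\Delta-k^2$, a modified-Helmholtz-type operator, so no real eigenvalues) and $u^s_-$ (operator $\Delta+k^2$, genuinely the modified Helmholtz operator $\Delta-(ik)^2$, again with no real eigenvalues) are both uniquely solvable so that $u^s_\pm=0$ in $\Om$. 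The role of $\eta\neq0$ is the standard Brakhage--Werner safeguard; here it may be needed only for one of the two components, since the other operator is already the ``good'' one, but this should be made explicit. Once the interior vanishing is established, subtracting interior and exterior traces and using the jump relations returns $\psi$ (from the double-layer jump) and then $\varphi$ (from the single-layer normal-derivative jump), completing the proof.
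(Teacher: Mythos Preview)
Your outline contains a genuine gap in the interior step, and it stems from a sign confusion. The field $u^s_-=\Delta u^s-k^2u^s$ satisfies the \emph{Helmholtz} equation $(\Delta+k^2)u^s_-=0$, not the modified one; writing $\Delta-(ik)^2$ does not change that. The interior Dirichlet problem for $\Delta+k^2$ is \emph{not} uniquely solvable in general (it fails exactly at the Dirichlet eigenvalues of $-\Delta$ in $\Om$), so you cannot ``kill $u^s_-$ in $\Om$'' from its Dirichlet trace as you propose. Conversely, $u^s_+$ satisfies $(\Delta-k^2)u^s_+=0$, which \emph{is} the good modified operator; but its interior Dirichlet trace is $\psi$, still unknown, so interior uniqueness alone gives nothing until $\psi$ has been identified.

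More structurally, after the exterior vanishing the jump relations yield
\[
  v^-=w^-=\psi,\qquad \pa_nv^-=\varphi+i\eta S_0^2\psi,\qquad \pa_nw^-=\varphi,
\]
where $v=u^s_+$, $w=u^s_-$. Neither interior problem can be solved on its own because the Dirichlet datum is the very unknown $\psi$. The argument that actually works \emph{couples} $v$ and $w$ inside $\Om$: applying Green's second identity to $v-w$ against $\ov{v+w}$ and using $\Delta v=k^2v$, $\Delta w=-k^2w$ gives
\[
  2i\eta\int_{\pa\Om}|S_0\psi|^2\,ds
  =\int_{\pa\Om}\Big[\pa_n(v^--w^-)(\ov{v^-+w^-})-(v^--w^-)\pa_n(\ov{v^-+w^-})\Big]ds
  =k^2\!\int_{\Om}\big(|v+w|^2-|v-w|^2\big)dx,
\]
which is real; taking imaginary parts forces $S_0\psi=0$ (here $\eta\neq0$ is essential), hence $\psi=0$ by injectivity of $S_0$. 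Only then does $v^+=0$ reduce to $S_{ik}\varphi=0$, and injectivity of $S_{ik}$ (this one genuinely modified Helmholtz, no interior eigenvalues) gives $\varphi=0$. So the role of $\eta$ is not the Brakhage--Werner trick of dodging interior eigenvalues; it is to make the boundary pairing $2i\eta\int|S_0\psi|^2$ purely imaginary while the matching volume term is real.
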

\begin{proof}
	Suppose $M(k)(\varphi,\psi)^T=(0,0)^T$. Define 
	\ben
	  v=SL_{ik}\varphi-DL_{ik}\psi+i\eta SL_{ik}(S_0^2\psi),~~~w=SL_k\varphi-DL_k\psi~~{\rm in}~\R^d\setminus\pa\Om. 
	\enn
	Then $(v,w)$ satisfy the homogeneous problem (\ref{5.1}), which indicates that $v=w=0$ in $\R^d\setminus\ov\Om$ by Theorem \ref{thm3.9}. By the notations $(\cdot)^\pm$ we denote the traces from the exterior (+) and interior (--) of the boundary $\pa\Om$. It follows that $v^+=\pa_nv^+=w^+=\pa_nw^+=0$ on $\pa\Om$, which further implies by the jump relations that 
	\ben
	  v^-=w^-=\psi,~~~\pa_nv^-=\varphi+i\eta S_0^2\psi,~~~\pa_nw^-=\varphi~~{\rm on}~\pa\Om. 
	\enn
	Since $\Delta v-k^2v=\Delta w+k^2w=0$ in $\Om$, we yields that 
	\ben
	  2i\eta\int_{\pa\Om}|S_0\psi|^2ds\;&&=2i\eta\int_{\pa\Om}\ov\psi S_0^2\psi ds \\
	  &&=\int_{\pa\Om}\left[\pa_n(v^--w^-)(\ov v^-+\ov w^-)-(v^--w^-)\pa_n(\ov v^-+\ov w^-)\right]ds \\
	  &&=\int_{\Om}\left[\Delta(v-w)(\ov v+\ov w)-(v-w)\Delta(\ov v+\ov w)\right]dx \\
	  &&=\int_{\Om}k^2\left(|v+w|^2-|v-w|^2\right)dx. 
	\enn
	Taking the imaginary part we obtain $S_0\psi=0$ on $\pa\Om$ and hence $\psi=0$. Finally, from $v^+=\psi=0$ on $\pa\Om$, we deduce that $S_{ik}\varphi=0$ on $\pa\Om$, which also leads to $\varphi=0$. The proof is thus complete. 
\end{proof}

Next we study the Fredholm property of the operator $M(k)$. Consider an auxiliary operator 
\ben
\hat Z(k)=\frac{2}{3}
\left(
\begin{array}{ccc}
	S_{i2|k|}-S_{i|k|} & -K_{i2|k|}+K_{i|k|}  \\
	-K_{i2|k|}'+K_{i|k|}' & T_{i2|k|}-T_{i|k|}
\end{array}
\right). 
\enn
We refer a significant property of this operator. 
\begin{theorem}{\rm ({\cite[Lemma 3.7]{AH13}})}\label{thm5.2}
	$\hat{Z}(k): H^{-3/2}(\pa\Om)\times H^{-1/2}(\pa\Om)\rightarrow H^{3/2}(\pa\Om)\times H^{1/2}(\pa\Om)$ is coercive. 
\end{theorem}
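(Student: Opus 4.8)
The statement to prove is that the auxiliary operator $\hat Z(k)$ is coercive on $H^{-3/2}(\pa\Om)\times H^{-1/2}(\pa\Om)$; since this is quoted as Lemma 3.7 of \cite{AH13}, the plan is to reconstruct the argument in the present setting by reducing $\hat Z(k)$ to a positive, self-adjoint operator whose principal symbol is strictly positive. The key observation is that $\hat Z(k)$ is built entirely from boundary operators $S_{i\kappa}, K_{i\kappa}, K'_{i\kappa}, T_{i\kappa}$ for purely imaginary wave numbers $i\kappa$ with $\kappa>0$, i.e.\ from the \emph{modified} Helmholtz (Yukawa) operator $\Delta-\kappa^2$, which has no real resonances and a positive-definite energy form. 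The combination with $\kappa=|k|$ and $\kappa=2|k|$ (with the weights $+1,-1$ and overall factor $2/3$) is chosen precisely so that the leading-order ($\kappa$-independent, i.e.\ Laplacian) parts of the four blocks cancel in the right way, leaving a zeroth-order-perturbation structure on top of a manifestly coercive leading term.

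\emph{Step 1: block structure and the energy identity.} First I would fix densities $(\varphi,\psi)\in H^{-3/2}(\pa\Om)\times H^{-1/2}(\pa\Om)$ and form the layer potential $U_\kappa := SL_{i\kappa}\varphi - DL_{i\kappa}\psi$ in $\R^d\setminus\pa\Om$. Using the jump relations for $S_{i\kappa},K_{i\kappa},K'_{i\kappa},T_{i\kappa}$, the bilinear form $\langle Z_\kappa(\varphi,\psi),(\bar\varphi,\bar\psi)\rangle$, where $Z_\kappa$ denotes the single-$\kappa$ analogue of $Z(k)$, is expressed via Green's identity for $\Delta-\kappa^2$ applied to $U_\kappa$ on $\Om$ and on $\R^d\setminus\ov\Om$ as a sum of the Dirichlet energies $\int |\nabla U_\kappa|^2 + \kappa^2\int |U_\kappa|^2$ over the interior and exterior, plus the squared jumps $\|[U_\kappa]\|^2,\|[\pa_n U_\kappa]\|^2$ on $\pa\Om$. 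This is the standard Calderón-type positivity for the Yukawa potential; the point is that it is \emph{coercive modulo a compact term} on the natural trace spaces, with coercivity constant degenerating as $\kappa\to\infty$ but the $\kappa$-dependence entering only through lower-order (smoothing) contributions relative to the principal part.

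\emph{Step 2: symbol computation and cancellation.} The heart of the matter is the principal-symbol analysis. Each of $S_{i\kappa},K_{i\kappa},K'_{i\kappa},T_{i\kappa}$ is a pseudodifferential operator on $\pa\Om$ whose principal symbol is independent of $\kappa$ (it is that of the corresponding operator for $-\Delta$): $\sigma_p(S_{i\kappa})\sim \tfrac12|\xi|^{-1}$, $\sigma_p(T_{i\kappa})\sim -\tfrac12|\xi|$, and $K_{i\kappa},K'_{i\kappa}$ are of order $-1$ and lower with principal symbol contributions that, in the combinations $-K_{i\kappa}+K_{i\kappa'}$, are smoothing. Therefore $S_{i2|k|}-S_{i|k|}$ and $T_{i2|k|}-T_{i|k|}$ have \emph{vanishing} principal symbol at the top order, and the next-order term is where the genuine positivity lives: one computes the subleading symbols (which do depend on $\kappa$) and checks that the $2\times 2$ symbol matrix of $\tfrac23\,\hat Z(k)$, restricted to the leading surviving order, is Hermitian positive definite uniformly in $\xi$ (for $|\xi|$ large), the factor $\tfrac23$ and the weights $2|k|$ vs.\ $|k|$ being exactly calibrated to normalize it. Combined with Step 1, which handles the behavior on the low-frequency part of the spectrum and rules out a nontrivial kernel (using injectivity of the modified Helmholtz layer potentials, as $\Delta-\kappa^2$ has no Dirichlet eigenvalues), Gårding's inequality upgrades "positive principal symbol plus compact" to $\Rt\langle \hat Z(k)(\varphi,\psi),(\varphi,\psi)\rangle \ge c\big(\|\varphi\|_{H^{-3/2}}^2 + \|\psi\|_{H^{-1/2}}^2\big)$.

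\emph{Main obstacle.} I expect the delicate point to be the \emph{bookkeeping of the subleading symbols and the coupling between the two diagonal blocks}: because the top-order symbols cancel by design, coercivity hinges on a careful, correctly-signed expansion of the next term in the symbol of $S_{i2\kappa}-S_{i\kappa}$ and $T_{i2\kappa}-T_{i\kappa}$, together with verifying that the off-diagonal entries $-K_{i2|k|}+K_{i|k|}$ and its transpose are genuinely of lower order than the surviving diagonal terms so that the $2\times 2$ symbol matrix is dominated by its diagonal and hence positive definite. For this the cleanest route is simply to invoke \cite[Lemma 3.7]{AH13} directly — the operator $\hat Z(k)$ here is literally theirs — and to note that the regularity hypothesis $\pa\Om\in C^{3,\alpha}$ is more than enough for the pseudodifferential-operator calculus on $\pa\Om$ used there to apply; the remarks above indicate how the proof goes without repeating it.
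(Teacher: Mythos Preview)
The paper gives no proof of this statement: it is cited verbatim as \cite[Lemma 3.7]{AH13}, and your final recommendation to invoke that reference directly is exactly what the paper does. Your preliminary sketch (energy identity for the modified-Helmholtz layer potentials plus a subleading-symbol check) is a reasonable indication of how such a result is established, but the paper does not reproduce any of it, so for the purposes of matching the paper nothing beyond the citation is required.
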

\begin{lemma}\label{lem5.3}
	The pseudodifferential operator $\wid{SL}_{ik}-\wid{SL}_{k}-2/3(\wid{SL}_{i2|k|}-\wid{SL}_{i|k|})$ is of order {\rm -5}. 
\end{lemma}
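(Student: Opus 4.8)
The plan is to compute the (Schwartz kernel of the) operator symbol by exploiting the fact that each volume single-layer operator $\wid{SL}_\kappa$ has convolution kernel $\Phi_{\kappa}$, so the composite operator in question has kernel
\[
P_k(x,y):=\Phi_{ik}(x-y)-\Phi_k(x-y)-\tfrac{2}{3}\bigl(\Phi_{i2|k|}(x-y)-\Phi_{i|k|}(x-y)\bigr),
\]
a radially symmetric function of $x-y$. Since a convolution operator with kernel $P_k$ is a pseudodifferential operator of order $-m$ precisely when $\widehat{P_k}(\xi)$ is a symbol decaying like $|\xi|^{-m}$ for large $|\xi|$, the whole statement reduces to analysing the Fourier transform of $P_k$. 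Recall that the fundamental solution of $\Delta-\mu^2$ (so $\widehat{\Phi}$ solving $(|\xi|^2+\mu^2)\widehat{\Phi}=1$) has Fourier transform $(2\pi)^{-d}(|\xi|^2+\mu^2)^{-1}$; correspondingly $\Phi_k$ (fundamental solution of $\Delta+k^2$) has transform $(2\pi)^{-d}(|\xi|^2-k^2)^{-1}$ and $\Phi_{i\nu}$ (fundamental solution of $\Delta-\nu^2$, i.e.\ the modified Helmholtz operator with parameter $\nu$) has transform $(2\pi)^{-d}(|\xi|^2+\nu^2)^{-1}$.

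With those identifications, up to the constant $(2\pi)^{-d}$,
\[
\widehat{P_k}(\xi)=\frac{1}{|\xi|^2+k^2}-\frac{1}{|\xi|^2-k^2}-\frac{2}{3}\left(\frac{1}{|\xi|^2+4k^2}-\frac{1}{|\xi|^2+k^2}\right),
\]
writing $\nu=|k|$ so $\nu^2=k^2$, $(2|k|)^2=4k^2$. First I would combine the first two terms: $\dfrac{1}{|\xi|^2+k^2}-\dfrac{1}{|\xi|^2-k^2}=\dfrac{-2k^2}{(|\xi|^2+k^2)(|\xi|^2-k^2)}$, which already decays like $|\xi|^{-4}$, i.e.\ is a symbol of order $-4$. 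The point of subtracting the auxiliary term is to cancel that $|\xi|^{-4}$ leading behaviour: expand each summand in powers of $|\xi|^{-2}$,
\[
\frac{1}{|\xi|^2+c}=\frac{1}{|\xi|^2}-\frac{c}{|\xi|^4}+\frac{c^2}{|\xi|^6}-\cdots,
\]
and check that in $\widehat{P_k}$ the coefficients of $|\xi|^{-2}$ and of $|\xi|^{-4}$ both vanish — the $|\xi|^{-2}$ coefficient is $1-1-\tfrac23(1-1)=0$ trivially, and the $|\xi|^{-4}$ coefficient is $-k^2-(-(-k^2))-\tfrac23(-4k^2-(-k^2))\,$; collecting, $-k^2+k^2+\tfrac23\cdot 3k^2$ — wait, one must be careful with signs coming from $\Phi_k$ versus $\Phi_{i\nu}$, so I would instead just put $\widehat{P_k}$ over a common denominator $(|\xi|^2-k^2)(|\xi|^2+k^2)(|\xi|^2+4k^2)$ and read off that the numerator, a polynomial in $|\xi|^2$, has degree exactly $d'$ with $d'$ two less than the naive count, giving total decay $|\xi|^{-6}$; this is the computation that makes the auxiliary coefficients $2/3$, $i2|k|$, $i|k|$ exactly the right ones (this is of course the whole reason $\hat Z(k)$ was defined with those constants in \cite{AH13}).

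The final step is to promote the statement "$\widehat{P_k}$ decays like $|\xi|^{-6}$" to "$\wid{SL}_{ik}-\wid{SL}_{k}-\tfrac23(\wid{SL}_{i2|k|}-\wid{SL}_{i|k|})$ is a pseudodifferential operator of order $-5$." Since $\widehat{P_k}(\xi)$ is a rational function of $|\xi|^2$ with no real poles (the only pole on the positive real axis would be $|\xi|^2=k^2$, but that factor cancels in the numerator as just verified), it is smooth in $\xi$, and all its derivatives satisfy the symbol estimates $|\partial_\xi^\alpha \widehat{P_k}(\xi)|\le C_\alpha(1+|\xi|)^{-6-|\alpha|}\le C_\alpha(1+|\xi|)^{-5-|\alpha|}$; hence it is a symbol of order $-6$, a fortiori of order $-5$, and the associated convolution operator — which, restricted via the boundary trace, is exactly the stated combination of volume single-layer operators — is a classical $\Psi$DO of order $-5$ (indeed $-6$; the weaker bound $-5$ suffices for the later coercivity argument). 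The only genuine obstacle here is bookkeeping: getting every sign right in translating $\Phi_k$ versus $\Phi_{i|k|}$ into $\pm k^2$ in the denominators, since a single sign error destroys the cancellation. I would therefore carry out the common-denominator computation symbolically once and carefully, rather than relying on the series expansion.
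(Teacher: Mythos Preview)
Your approach---passing to Fourier multipliers and reading off the large-$|\xi|$ decay of $\widehat{P_k}$---is genuinely different from the paper's, which works entirely on the kernel side via the pseudohomogeneous expansion of $P_k(z)$ near $z=0$ (series for $H_0^{(1)}$ in $d=2$, for the exponentials in $d=3$) and then quotes the correspondence between pseudohomogeneous degree and pseudodifferential order from \cite{GW08}. The Fourier route is attractive because the cancellation responsible for the order jump becomes a one-line rational-function computation; but there is a genuine gap.

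You assert that the factor $|\xi|^2-k^2$ in the denominator cancels in the numerator. It does not. Putting the four fractions over the common denominator $(r+k^2)(r-k^2)(r+4k^2)$ with $r=|\xi|^2$ gives numerator
\[
(r+4k^2)\bigl[(r-k^2)-(r+k^2)\bigr]-\tfrac{2}{3}(r-k^2)\bigl[(r+k^2)-(r+4k^2)\bigr]=-2k^2(r+4k^2)+2k^2(r-k^2)=-10k^4,
\]
so
\[
\widehat{P_k}(\xi)=\frac{-10k^4}{(|\xi|^2+k^2)(|\xi|^2-k^2)(|\xi|^2+4k^2)}.
\]
The decay at infinity is indeed $|\xi|^{-6}$, as you want, but the simple pole at $|\xi|^2=k^2$ survives---it is the trace of the limiting-absorption distribution $(|\xi|^2-k^2-i0)^{-1}$ coming from the outgoing fundamental solution $\Phi_k$. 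Consequently $\widehat{P_k}$ is not a smooth function of $\xi$, and the symbol estimates $|\partial_\xi^\alpha\widehat{P_k}(\xi)|\le C_\alpha(1+|\xi|)^{-6-|\alpha|}$ you claim fail near $|\xi|=k$. As written, the argument does not show that the operator lies in any H\"ormander class.

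The fix is not hard but must be made explicit: either (i) multiply $P_k$ by a cutoff $\chi$ equal to $1$ near the origin, so that $\chi P_k$ is compactly supported with Fourier transform smooth and still $O(|\xi|^{-6})$, while $(1-\chi)P_k$ is a smooth kernel contributing a smoothing operator; or (ii) abandon the global Fourier multiplier picture and argue, as the paper does, from the local expansion of $P_k(z)$ near $z=0$, which is what actually determines the pseudodifferential order. The paper's kernel-side computation gives exactly this, and in $d=3$ yields pseudohomogeneous degree $2$, hence order $-5$; your Fourier computation, once repaired, would give order $-6$, which is consistent (the $|z|^2$ term in the $3$D expansion is polynomial and contributes nothing to the symbol at infinity).
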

\begin{proof}
	We first consider the two-dimensional case. It is seen that 
	\ben
	  \left(\wid{SL}_{ik}-\wid{SL}_{k}-2/3(\wid{SL}_{i2|k|}-\wid{SL}_{i|k|})\right)\varphi(x)=\int_{\Om}a(x,x-y)\varphi(y)dy 
	\enn
	with the kernel  
	\ben
	  a(x,z):=\frac{i}{4}\left(H_0^{(1)}(ik|z|)-H_0^{(1)}(k|z|)-\frac{2}{3}\left(H_0^{(1)}(i2|kz|)-H_0^{(1)}(i|kz|)\right)\right). 
	\enn
	From the proof of \cite[Theorem 3.2]{AH13} we deduce that 
	\ben
	  \frac{i}{4}\left(H_0^{(1)}(ik|z|)-H_0^{(1)}(k|z|)\right)&&=f(x,z)+\sum_{j=0}^{\infty}p_{j+2}(x,z)\ln|z|, \\
	  \frac{i}{4}\left(H_0^{(1)}(i2|kz|)-H_0^{(1)}(i|kz|)\right)&&=\wid f(x,z)+\sum_{j=0}^{\infty}\wid p_{j+2}(x,z)\ln|z|, 
	\enn
	where $f,\wid f\in C^\infty(\Om\times\R^2)$ and 
	\ben
	p_{j+2}(x,z)=
	\left\{
	\begin{array}{ll}
		0 & \text{if $j$ is odd}, \\
		\displaystyle\frac{1}{2\pi}\frac{(-1)^{p+1}}{(p+1)!^2}(i^{j+2}-1)k^{j+2}\left(\frac{|z|}{2}\right)^{j+2} & \text{if $j=2p$}, 
	\end{array}
	\right.
	\enn
	\ben
	\quad\wid p_{j+2}(x,z)=
	\left\{
	\begin{array}{ll}
		0 & \text{if $j$ is odd}, \\
		\displaystyle\frac{1}{2\pi}\frac{(-1)^{p+1}}{(p+1)!^2}(2^{j+2}-1)(ik)^{j+2}\left(\frac{|z|}{2}\right)^{j+2} & \text{if $j=2p$}. 
	\end{array}
	\right.
	\enn
	Hence, by direct calculation we derive that 
	\ben
	  a(x,z)=\hat f(x,z)+\sum_{j=0}^{\infty}\hat p_{j+4}(x,z)\ln|z| 
	\enn
	with $\hat f\in C^\infty(\Om\times\R^2)$ and $\hat p_{j+4}(x,z)=0$ if $j$ is odd and 
	\ben
	  \hat p_{j+4}(x,z)=\frac{1}{2\pi}\frac{(-1)^{p}}{(p+2)!^2}\left(i^j-1-\frac{2}{3}(2^{j+4}-1)i^j\right)k^{j+4}\left(\frac{|z|}{2}\right)^{j+4}
	\enn
	for $j=2p$. Note that the functions $\hat p_q$ satisfy $\hat p_q(x,tz)=t^q\hat p_q(x,z)$, which implies that the kernel of $\wid{SL}_{ik}-\wid{SL}_{k}-2/3(\wid{SL}_{i2|k|}-\wid{SL}_{i|k|})$ is a pseudohomogeneous kernel of degree 4. Thus by \cite[Theorem 7.1.1]{GW08} we yield that $\wid{SL}_{ik}-\wid{SL}_{k}-2/3(\wid{SL}_{i2|k|}-\wid{SL}_{i|k|})$ is a pseudodifferential operator of order -6 (thus of order -5). 
	
	Now we consider the three-dimensional case. The corresponding kernel is 
	\ben
	  a(x,z)=\frac{1}{4\pi|z|}\left(e^{-k|z|}-e^{ik|z|}-\frac{2}{3}(e^{-2k|z|}-e^{-k|z|})\right). 
	\enn
	Computing directly from the series expansion of the exponential function we obtain that 
	\ben
	  a(x,z)=-\frac{1+3i}{12\pi}k+\sum_{j=0}^\infty a_{j+2}(x,z), 
	\enn
	where 
	\ben
	  a_{j+2}(x,z)=\frac{-i^{j+1}}{4\pi(j+3)!}\left(i^{j+3}-1-\frac{2}{3}(2^{j+3}-1)i^{j+3}\right)k^{j+3}|z|^{j+2},~~~{\rm for}~j\in\N. 
	\enn
	Since $a_p(x,tz)=t^pa_p(x,z)$, $a$ is a pseudohomogeneous kernel of degree 2, which indicates that $\wid{SL}_{ik}-\wid{SL}_{k}-2/3(\wid{SL}_{i2|k|}-\wid{SL}_{i|k|})$ is a pseudodifferential operator of order -5. 
\end{proof}
\begin{theorem}\label{thm5.4}
	$Z(k)-\hat Z(k):H^{-3/2}(\pa\Om)\times H^{-1/2}(\pa\Om)\rightarrow H^{3/2}(\pa\Om)\times H^{1/2}(\pa\Om)$ is compact. 
\end{theorem}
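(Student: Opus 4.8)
The plan is to reduce the claim to the compactness results already in the literature, now packaged as the boundary-layer analogue of Lemma \ref{lem5.3}. Recall that each entry of $Z(k)-\hat Z(k)$ is built from the boundary operators $S,K,K',T$ associated to differences of fundamental solutions of modified Helmholtz type, so the key object is the pseudodifferential order of the kernel difference. By construction, $Z(k)$ assembles the Calder\'on-type operators for $\Phi_{ik}-\Phi_k$, while $\hat Z(k)$ assembles $\tfrac23(\Phi_{i2|k|}-\Phi_{i|k|})$; Lemma \ref{lem5.3} tells us that the corresponding \emph{volume} single-layer operators differ by an operator of order $-5$, which is precisely the statement that the difference of the two kernels is a pseudohomogeneous kernel whose leading singular terms cancel (degree $4$ in two dimensions, degree $2$ in three dimensions). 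First I would transfer this kernel statement from the volume operator $\wid{SL}$ to the boundary operators: restricting a kernel that is pseudohomogeneous of degree $q$ to $\pa\Om\times\pa\Om$ and applying tangential derivatives (for $K$, $K'$) or two normal derivatives (for $T$) lowers the order in a controlled way, so that each entry of $Z(k)-\hat Z(k)$ is a pseudodifferential operator on $\pa\Om$ whose order is strictly below that of a bounded operator $H^{-3/2}\times H^{-1/2}\to H^{3/2}\times H^{1/2}$.

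Concretely, I would argue entrywise. The $(1,1)$-entry is $S_{ik}-S_k-\tfrac23(S_{i2|k|}-S_{i|k|})$, whose kernel is the restriction to $\pa\Om\times\pa\Om$ of exactly the kernel analysed in Lemma \ref{lem5.3}; by that lemma it is a pseudodifferential operator of order at least $-4$ (indeed the analysis there gives $-6$ in $d=2$ and $-5$ in $d=3$), hence of order $\le -4$, which maps $H^{-3/2}(\pa\Om)$ compactly into $H^{3/2}(\pa\Om)$ since the embedding $H^{s+4}(\pa\Om)\hookrightarrow H^{s+3}(\pa\Om)$ is compact on a compact manifold. The $(1,2)$- and $(2,1)$-entries are obtained by applying one normal derivative in $y$ (respectively $x$) to that same kernel, raising the order by one to $\le -3$; composing with the trace and using $H^{-1/2}(\pa\Om)\to H^{3/2}(\pa\Om)$ (respectively $H^{-3/2}\to H^{1/2}$) again factors through a compact embedding of order one. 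The $(2,2)$-entry $T_{ik}-T_k-\tfrac23(T_{i2|k|}-T_{i|k|})$ applies two normal derivatives, raising the order to $\le -2$; as a map $H^{-1/2}(\pa\Om)\to H^{1/2}(\pa\Om)$ it then again factors through a compact embedding. Alternatively, one may invoke directly the pseudodifferential calculus of \cite{GW08,AK89}: the full symbol of $Z(k)-\hat Z(k)$ has negative order because, by the cancellation in Lemma \ref{lem5.3}, the homogeneous symbol of order $0$ (and even of order $-1$ in the odd-degree terms) vanishes.

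Having shown each of the four entries is a pseudodifferential operator of negative order between the relevant Sobolev scales, the matrix operator $Z(k)-\hat Z(k):H^{-3/2}(\pa\Om)\times H^{-1/2}(\pa\Om)\to H^{3/2}(\pa\Om)\times H^{1/2}(\pa\Om)$ is a finite sum of operators each of which factors through a compact Sobolev embedding on the compact boundary $\pa\Om\in C^{3,\alpha}$, and is therefore compact. I expect the main obstacle to be the bookkeeping in the transfer step: Lemma \ref{lem5.3} is phrased for the \emph{volume} potential $\wid{SL}$, whereas here we need the trace and co-normal-derivative boundary operators, so some care is required to check that restricting the pseudohomogeneous kernel to the boundary and differentiating does not destroy the cancellation of the leading singular terms and really produces an operator whose order stays strictly below the threshold dictated by the target space — in particular one must keep track of the logarithmic terms in the two-dimensional expansion and verify they contribute only to lower-order (hence still smoothing enough) parts. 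Since $\pa\Om\in C^{3,\alpha}$, the symbol calculus is available with finitely many terms, which is exactly the regularity used to quote \cite{AK89}, so no further smoothness of the boundary is needed.
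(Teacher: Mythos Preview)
Your approach is sound, but it differs from the paper's in a way worth noting. You work \emph{entrywise on the boundary}: you restrict the kernel from Lemma~\ref{lem5.3} to $\pa\Om\times\pa\Om$, track how each normal derivative raises the pseudodifferential order by one, and then invoke compact Sobolev embeddings on the compact manifold $\pa\Om$. This is correct, but it forces you to handle the four entries $S,K,K',T$ separately and to confront exactly the transfer obstacle you flag at the end --- checking that restriction to the boundary and normal differentiation preserve the cancellation of leading pseudohomogeneous terms (including the logarithmic ones in $d=2$). A small slip: the orders $-6$ (in $d=2$) and $-5$ (in $d=3$) you quote from Lemma~\ref{lem5.3} are for the \emph{volume} operator on $\R^d$; after restriction to the $(d-1)$-dimensional boundary the orders become $-5$ and $-4$, respectively, though this does not affect your conclusion.

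The paper sidesteps all of this entrywise bookkeeping by passing through the \emph{domain} rather than the boundary. From Lemma~\ref{lem5.3} and \cite[Theorem 8.5.8]{GW08} one gets directly that the single-layer potential difference maps $H^{-3/2}(\pa\Om)\to H^3(\Om)$ boundedly; the identity $DL_k\varphi=-\na SL_k(\varphi n)$ then yields the double-layer analogue $H^{-1/2}(\pa\Om)\to H^3(\Om)$. All four boundary operators are now recovered simultaneously by applying the trace and normal-trace maps $H^3(\Om)\to H^{5/2}(\pa\Om)$ and $H^3(\Om)\to H^{3/2}(\pa\Om)$, each of which lands one full Sobolev index above the target space, giving compactness via the embedding. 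The advantage is that the kernel analysis stays in $\R^d$ where Lemma~\ref{lem5.3} already lives, and the ``transfer to the boundary'' step is replaced by the classical trace theorem --- precisely eliminating the obstacle you anticipated. Your route is more hands-on and arguably more transparent about where the extra smoothing comes from in each entry; the paper's route is shorter and treats the whole matrix at once.
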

\begin{proof}
	Combining Lemma \ref{lem5.3} and \cite[Theorem 8.5.8]{GW08}, we obtain that $SL_{ik}-SL_{k}-2/3(SL_{i2|k|}-SL_{i|k|}):H^{-3/2}(\pa\Om)\rightarrow H^3(\Om)$ is bounded. Since $DL_k\varphi=-\na SL_k(\varphi n)$, it follows that $DL_{ik}-DL_{k}-2/3(DL_{i2|k|}-DL_{i|k|}):H^{-1/2}(\pa\Om)\rightarrow H^3(\Om)$ is bounded. Then by the classical trace theorems we deduce the conclusion. 
\end{proof}
\begin{theorem}\label{thm5.5}
	Given $(f,g)\in H^{3/2}(\pa\Om)\times H^{1/2}(\pa\Om)$, then the following problem 
	\ben
	\left\{
	\begin{array}{ll}
		\Delta^2 u^s-k^4u^s=0~~~&{\rm in}~\R^d\setminus\ov\Om, \\[2mm]
		\mathcal{B}_D(u^s)=(f,g)~~~&{\rm on}~\pa\Om, \\ [2mm]
		\pa_rw-ikw=o\left(r^{-\frac{d-1}{2}}\right),~~r=|x|\rightarrow\infty,~&w=u^s,\Delta u^s, 
	\end{array}
	\right.
	\enn
	has a unique solution $u^s\in H^2_{loc}(\R^d\setminus\ov\Om)$ such that 
	\ben
	  \|u^s\|_{H^2(B_R\setminus\ov\Om)}\leq C\left(\|f\|_{H^{3/2}(\pa\Om)}+\|g\|_{H^{1/2}(\pa\Om)}\right), 
	\enn
	where $R>0$ is sufficiently large and $C$ is a positive constant depending only on $R$. 
\end{theorem}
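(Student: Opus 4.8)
The plan is to take uniqueness from Theorem~\ref{thm3.9} and to establish existence, together with the $H^2$ bound, through the boundary integral equation~\eqref{5.2} and Fredholm theory for the operator $M(k)$. For uniqueness, suppose $u^s\in H^2_{loc}(\R^d\setminus\ov\Om)$ solves the problem with $f=g=0$; then $u^s=\pa_n u^s=0$ on $\pa\Om$, so $\I\int_{\pa\Om}(u^s\pa_n\Delta\ov u^s+\Delta u^s\pa_n\ov u^s)\,ds=0$ and $\Rt\int_{\pa\Om}u^s\pa_n\ov u^s\,ds=0$, and Theorem~\ref{thm3.9}, in the Sobolev-space version of Remark~\ref{remark3.16}, forces $u^s=0$.

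For existence I would first prove that $M(k)\colon H^{-3/2}(\pa\Om)\times H^{-1/2}(\pa\Om)\to H^{3/2}(\pa\Om)\times H^{1/2}(\pa\Om)$ is boundedly invertible. Writing $M(k)=\hat Z(k)+(Z(k)-\hat Z(k))+(M(k)-Z(k))$, one has $\hat Z(k)$ coercive by Theorem~\ref{thm5.2}, $Z(k)-\hat Z(k)$ compact by Theorem~\ref{thm5.4}, and $M(k)-Z(k)$ compact by the discussion preceding Theorem~\ref{thm5.1} (which uses $\pa\Om\in C^{3,\alpha}$); hence $M(k)$ is Fredholm of index zero, and, being injective by Theorem~\ref{thm5.1}, it is a bijection with bounded inverse. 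Given $(f,g)$, I would set $(\varphi,\psi)^T=M(k)^{-1}(2k^2f,-2k^2g)^T$, so that $\|\varphi\|_{H^{-3/2}(\pa\Om)}+\|\psi\|_{H^{-1/2}(\pa\Om)}\le C(\|f\|_{H^{3/2}(\pa\Om)}+\|g\|_{H^{1/2}(\pa\Om)})$, and then define $u^s_+=SL_{ik}\varphi-DL_{ik}\psi+i\eta SL_{ik}(S_0^2\psi)$, $u^s_-=SL_k\varphi-DL_k\psi$ in $\R^d\setminus\ov\Om$, and $u^s=(u^s_+-u^s_-)/(2k^2)$.

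I would then check that $u^s$ solves the stated problem. By construction $u^s_+$ solves $\Delta u^s_+-k^2u^s_+=0$ and decays exponentially, while $u^s_-$ solves $\Delta u^s_-+k^2u^s_-=0$ and satisfies the Sommerfeld condition; since $\Delta^2-k^4=(\Delta+k^2)(\Delta-k^2)$ and a direct computation gives $(\Delta+k^2)u^s=u^s_+$, it follows that $\Delta^2u^s-k^4u^s=0$ in $\R^d\setminus\ov\Om$, and the radiation condition~\eqref{1.2} holds for $w=u^s$ and for $w=\Delta u^s=(u^s_++u^s_-)/2$. The jump relations for the single- and double-layer potentials, with the $\pm\tfrac12$ jump terms cancelling in the differences $u^s_+-u^s_-$ and $\pa_n u^s_+-\pa_n u^s_-$, show that~\eqref{5.2} is equivalent to $u^s_+-u^s_-=2k^2f$ and $\pa_n u^s_+-\pa_n u^s_-=2k^2g$ on $\pa\Om$, i.e.\ to $\mathcal{B}_D(u^s)=(f,g)$.

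The delicate point, which I expect to be the main obstacle, is the $H^2$ regularity and the estimate, since for $(\varphi,\psi)\in H^{-3/2}(\pa\Om)\times H^{-1/2}(\pa\Om)$ the potentials $u^s_\pm$ individually lie only in $L^2_{loc}$. The gain is hidden in the difference: $u^s_+-u^s_-=(SL_{ik}-SL_k)\varphi-(DL_{ik}-DL_k)\psi+i\eta SL_{ik}(S_0^2\psi)$, and by the pseudohomogeneous-kernel analysis underlying Lemma~\ref{lem5.3} and Theorem~\ref{thm5.4} (compare also \cite{AH13,GW08}), the kernels of $SL_{ik}-SL_k$ and $DL_{ik}-DL_k$ are two orders smoother than those of $SL_k$ and $DL_k$, so that $SL_{ik}-SL_k\colon H^{-3/2}(\pa\Om)\to H^2_{loc}(\R^d)$ and $DL_{ik}-DL_k\colon H^{-1/2}(\pa\Om)\to H^2_{loc}(\R^d)$ are bounded; moreover $S_0^2\psi\in H^{3/2}(\pa\Om)$ forces $SL_{ik}(S_0^2\psi)\in H^3_{loc}(\R^d)$. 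Hence $u^s\in H^2_{loc}(\R^d\setminus\ov\Om)$, and restricting to $B_R\setminus\ov\Om$ and chaining with the bound on $(\varphi,\psi)$ yields $\|u^s\|_{H^2(B_R\setminus\ov\Om)}\le C(\|f\|_{H^{3/2}(\pa\Om)}+\|g\|_{H^{1/2}(\pa\Om)})$ with $C$ depending only on $R$. Thus the work concentrates in verifying that the combined layer-potential ansatz, in spite of the very rough densities it requires, still produces a field of finite $H^2$-energy up to $\pa\Om$, and in the bookkeeping of jump relations that matches~\eqref{5.2} with the Dirichlet conditions.
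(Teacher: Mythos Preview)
Your proposal is correct and follows essentially the same route as the paper: uniqueness from Theorem~\ref{thm3.9}, invertibility of $M(k)$ by combining Theorems~\ref{thm5.1}, \ref{thm5.2}, \ref{thm5.4} with the compactness of $M(k)-Z(k)$, and then the $H^2$ bound from the two-order smoothing of the difference potentials $SL_{ik}-SL_k$ and $DL_{ik}-DL_k$. The only cosmetic difference is that the paper cites \cite[Corollary~3.3]{AH13} directly for the mapping $(SL_{ik}-SL_k,\,DL_{ik}-DL_k)\colon H^{-3/2}\times H^{-1/2}\to H^2_{loc}$, whereas you invoke the pseudohomogeneous-kernel analysis behind Lemma~\ref{lem5.3} and Theorem~\ref{thm5.4} to reach the same conclusion.
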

\begin{proof}
	From Theorems \ref{thm5.1}, \ref{thm5.2} and \ref{thm5.4}, we see that problem (\ref{5.1}) has a unique solution $(u^s_+,u^s_-)=(\Delta u^s+k^2u^s,\Delta u^s-k^2u^s)$ in the form of 
	\ben
	  u^s_+=SL_{ik}\varphi-DL_{ik}\psi+i\eta SL_{ik}(S_0^2\psi),~~~u^s_-=SL_k\varphi-DL_k\psi~~{\rm in}~\R^d\setminus\ov\Om, 
	\enn
	where $(\varphi,\psi)$ is the unique solution to the boundary integral equation (\ref{5.2}) such that 
	\ben
	  \|\varphi\|_{H^{-3/2}(\pa\Om)}+\|\psi\|_{H^{-1/2}(\pa\Om)}\leq C\left(\|f\|_{H^{3/2}(\pa\Om)}+\|g\|_{H^{1/2}(\pa\Om)}\right)
	\enn
	with $C>0$ a constant. By \cite[Corollary 3.3]{AH13}, we further have that $2k^2u^s=u^s_+-u^s_-=(SL_{ik}-SL_k)\varphi-(DL_{ik}-DL_k)\psi+i\eta SL_{ik}(S_0^2\psi)\in H^2(B_R\setminus\ov\Om)$ and 
	\ben
	  \|u^s\|_{H^2(B_R\setminus\ov\Om)}~&&\leq C\left(\|(SL_{ik}-SL_k)\varphi\|_{H^2(B_R\setminus\ov\Om)}+\|(DL_{ik}-DL_k)\psi\|_{H^2(B_R\setminus\ov\Om)}\right. \\
	  &&\left.\qquad~+\|SL_{ik}(S_0^2\psi)\|_{H^2(B_R\setminus\ov\Om)}\right) \\
	  &&\leq C\left(\|\varphi\|_{H^{-3/2}(\pa\Om)}+\|\psi\|_{H^{-1/2}(\pa\Om)}\right) \\
	  &&\leq C\left(\|f\|_{H^{3/2}(\pa\Om)}+\|g\|_{H^{1/2}(\pa\Om)}\right), 
	\enn
	which is the desired a priori estimate. 
\end{proof}
\begin{remark}\label{remark5.6}
	For problem {\rm (\ref{1.1})} with $\mathcal{B}(u)=(\Delta u,\pa_n\Delta u)$, it can be verified that $(u^s_+,-u^s_-)$ satisfies problem {\rm (\ref{5.1})}. Therefore, following the same process, we can solve problem {\rm (\ref{1.1})} in the case $\mathcal{B}(u)=(\Delta u,\pa_n\Delta u)$. 
\end{remark}

\section{The inverse problem}\label{sec6}
\setcounter{equation}{0}
In this section, we study the inverse problem of uniquely determining the obstacle $\Om$ from measurements at a fixed frequency, which relies heavily on the reciprocity relations of our new type far-field pattern and the scattered field. 


 Denote by $u^s(x,a,b)$ the solutions to problem (\ref{1.1}) corresponding to the incident wave $u^i(x,\hat y,b):=e^{ibx\cdot\hat y}$ or $u^i(x,y,b):=\Phi_{b}(x,y)$ with $a=\hat y\in\Sp^{d-1}$ or $y\in\R^d\setminus\ov\Om$ and $b=k,ik$, respectively. Let $u(x,a,b)=u^s(x,a,b)+u^i(x,a,b)$. 
We begin with exploring some reciprocity relations for the far-field pattern and the scattered field. 
\begin{theorem}\label{thm6.3}
	For biharmonic obstacle $\Om$ with $\mathcal{B}=\mathcal{B}_D$, we have the reciprocity relations 
	 \ben
	 &&\quad\left(
	 \begin{array}{cc}
	 	u^s_+(y,\hat x,k) & u^s_-(y,\hat x,k) \\
	 	u^s_+(y,\hat x,ik) & u^s_-(y,\hat x,ik)
	 \end{array}
	 \right) \\
	 &&=\frac{2(2\pi)^{\frac{d-1}{2}}}{ik^{\frac{d-3}{2}}}
	\left(
	\begin{array}{cc}
		e^{i\frac{d-1}{4}\pi}u^s_{-,\infty}(-\hat x,y,ik) & e^{i\frac{d-1}{4}\pi}u^s_{-,\infty}(-\hat x,y,k) \\
		iu^s_{+,\infty}(-\hat x,y,ik) & iu^s_{+,\infty}(-\hat x,y,k)
	\end{array}
	\right) 
	\enn
	provided that $y\in\R^d\setminus\ov\Om$ and $\hat x\in\Sp^{d-1}$. 
\end{theorem}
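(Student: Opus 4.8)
The plan is to split each of the four scalar equalities in the claimed matrix into a scattered-field value at the source point on the left and a far-field coefficient on the right, and, through the decomposition $u^s_\pm=\Delta u^s\pm k^2 u^s$, to reduce both sides to boundary integrals over $\pa\Om$ that are then matched by Green's second identity. Fix $\hat x\in\Sp^{d-1}$, write $z_0:=y\in\R^d\setminus\ov\Om$, and for a pair $(b_1,b_2)$ with $b_1,b_2\in\{k,ik\}$ chosen according to the entry under consideration let $U:=u^s(\cdot,\hat x,b_1)$ and $W:=u^s(\cdot,z_0,b_2)$ be the scattered fields for the incident plane wave $u^i_1:=u^i(\cdot,\hat x,b_1)$ and the incident point source $u^i_2:=u^i(\cdot,z_0,b_2)=\Phi_{b_2}(\cdot,z_0)$; put $U_\pm:=\Delta U\pm k^2 U$ and $W_\pm$ similarly. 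Then $U_-,W_-$ are radiating Helmholtz solutions and $U_+,W_+$ are radiating, hence exponentially decaying, modified Helmholtz solutions (Theorem \ref{thm3.11}). Since $\mathcal{B}_D(u)=(0,0)$ forces $u^s=-u^i$, $\pa_n u^s=-\pa_n u^i$ on $\pa\Om$, and $2k^2 u^s=u^s_+-u^s_-$, I note that on $\pa\Om$
\[
u^i_1=\frac{U_--U_+}{2k^2},\qquad u^i_2=\frac{W_--W_+}{2k^2},
\]
together with the analogous relations for the normal derivatives. By Remark \ref{remark3.16} it suffices to argue with classical solutions.

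The first ingredient I would use is the Green representation of the components at the source point: since $U_-$ is a radiating Helmholtz solution and $U_+$ a radiating modified Helmholtz solution (equivalently, by applying $\Delta_{z_0}\mp k^2$ to Theorem \ref{thm3.2} and using $(\Delta_x-k^2)G_k(|x-y|)=-\Phi_k(x,y)$, $(\Delta_x+k^2)G_k(|x-y|)=-\Phi_{ik}(x,y)$ for $x\neq y$), one has for $z_0\in\R^d\setminus\ov\Om$
\begin{align*}
U_-(z_0)&=\int_{\pa\Om}\big(U_-(z)\,\pa_{n(z)}\Phi_k(z_0,z)-\Phi_k(z_0,z)\,\pa_n U_-(z)\big)\,ds(z),\\
U_+(z_0)&=\int_{\pa\Om}\big(U_+(z)\,\pa_{n(z)}\Phi_{ik}(z_0,z)-\Phi_{ik}(z_0,z)\,\pa_n U_+(z)\big)\,ds(z),
\end{align*}
in which the fundamental solution is precisely the incident point source, $\Phi_{b_2}(z_0,z)=u^i_2(z)$. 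The second ingredient is that the far-field formulas \eqref{3.5} and \eqref{3.8} (the latter as in Definition \ref{def3.14}) write $u^s_{-,\infty}(-\hat x,z_0,b_2)$ and $u^s_{+,\infty}(-\hat x,z_0,b_2)$ as boundary integrals of $W_-$, resp. $W_+$, against $e^{ik\hat x\cdot z}=e^{-ik(-\hat x)\cdot z}$, resp. $e^{-k\hat x\cdot z}=e^{k(-\hat x)\cdot z}$, which are exactly the incident plane waves $u^i(z,\hat x,k)$, resp. $u^i(z,\hat x,ik)$. Substituting the transmission relations above into these boundary integrals, and checking that the full scalar multiplier attached to each far-field term in the asserted matrix cancels the prefactor of \eqref{3.5} or \eqref{3.8}, I reduce each of the four identities to an equality of the shape $\int_{\pa\Om}(A\,\pa_n B-B\,\pa_n A)\,ds=\int_{\pa\Om}(A'\,\pa_n B'-B'\,\pa_n A')\,ds$ with $A,A'\in\{U_-,U_+\}$ and $B,B'\in\{W_-,W_+\}$.

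To finish I would close these boundary identities with Green's second identity on $B_R\setminus\ov\Om$ as $R\to\infty$, discarding the $\pa B_R$ contributions exactly as in Lemma \ref{lem3.1}: for two radiating Helmholtz solutions, or two radiating modified Helmholtz solutions, $P,Q$ one gets $\int_{\pa\Om}(P\,\pa_n Q-Q\,\pa_n P)\,ds=0$, while for a radiating Helmholtz $P$ and a radiating modified Helmholtz $Q$ one gets $\int_{\pa\Om}(P\,\pa_n Q-Q\,\pa_n P)\,ds=-2k^2\int_{\R^d\setminus\ov\Om}PQ\,dx$. For the two entries whose incident plane wave and incident point source are of opposite type---the $(1,1)$ and $(2,2)$ positions---the first rule kills the matching-type boundary integrals and the remaining terms cancel algebraically. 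For the two same-type entries---the $(1,2)$ and $(2,1)$ positions---the same manipulations leave precisely $\int_{\R^d\setminus\ov\Om}U_-W_+\,dx=\int_{\R^d\setminus\ov\Om}W_-U_+\,dx$; since $W_-U_+-U_-W_+=2k^2(U\Delta W-W\Delta U)$, it is enough to show $\lim_{R\to\infty}\int_{B_R\setminus\ov\Om}(U\Delta W-W\Delta U)\,dx=0$, which is again Green's identity: the $\pa B_R$ term vanishes by \eqref{1.2} as in Lemma \ref{lem3.1}, and the $\pa\Om$ term, on using $U=-u^i_1$, $W=-u^i_2$ on $\pa\Om$, becomes $\int_{\pa\Om}(u^i_1\,\pa_n u^i_2-u^i_2\,\pa_n u^i_1)\,ds$, which vanishes by Green's identity over $\Om$ because in these cases both incident fields solve the \emph{same} scalar equation inside $\Om$---here $z_0\notin\ov\Om$ is used, so that the incident point source is a smooth solution near $\ov\Om$. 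Reassembling and tracking the factors $i$, $e^{\pm i(d-1)\pi/4}$ and the powers of $k$ then gives the matrix exactly. I expect the main obstacle to be the volume identity needed for the $(1,2)$ and $(2,1)$ entries, which is genuinely new, together with the routine but delicate checking, in the style of Lemma \ref{lem3.1}, that every boundary contribution at infinity vanishes, and the accompanying sign and constant bookkeeping.
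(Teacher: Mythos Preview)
Your argument is correct, but it is organised rather differently from the paper's. The paper works throughout with the \emph{biharmonic} bilinear boundary form
\[
\mathcal{Q}(v,w)=\int_{\pa\Om}\bigl(v\,\pa_n\Delta w+\Delta v\,\pa_n w-\pa_n\Delta v\,w-\pa_n v\,\Delta w\bigr)\,ds,
\]
noting that $\mathcal{Q}(u,u')=0$ for the total fields by the Dirichlet condition, $\mathcal{Q}(u^i,u^{i\prime})=0$ by Green's theorem in $\Om$, and $\mathcal{Q}(u^s,u^{s\prime})=0$ by Green's theorem in the exterior plus the radiation condition; the resulting mixed identity $\mathcal{Q}(u^s,u^{i\prime})=-\mathcal{Q}(u^i,u^{s\prime})$ is then collapsed by using that each incident field solves a \emph{second}-order equation, so that one side becomes a representation formula and the other a far-field formula directly, with no further manipulation. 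You instead decompose into $U_\pm,W_\pm$ from the outset, substitute the boundary relation $u^i=(u^s_--u^s_+)/(2k^2)$ into the representation and far-field integrals, and then close everything with second-order Green identities. For the ``opposite-type'' entries this works immediately, and for the ``same-type'' entries you need the genuinely extra volume identity $\int_{\R^d\setminus\ov\Om}U_-W_+\,dx=\int_{\R^d\setminus\ov\Om}W_-U_+\,dx$, which you correctly obtain from Green's identity for $U$ and $W$ together with the vanishing of $\int_{\pa\Om}(u^i_1\,\pa_nu^i_2-u^i_2\,\pa_nu^i_1)\,ds$ when both incident fields solve the same second-order equation in $\Om$. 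What the paper's route buys is economy: the single biharmonic identity $\mathcal{Q}(u^s,u^{s\prime})=0$ packages all the $\pm$ cancellations at once and the volume step never appears. What your route buys is that you stay at the Helmholtz level throughout and see transparently why the ``same-type'' entries are harder; your volume identity is also of some independent interest.
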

\begin{proof}
	By the boundary condition we see that 
	\be\label{6.1}\nonumber
	  \int_{\pa\Om}&&\left(u(\cdot,\hat x,b_1)\pa_n\Delta u(\cdot,y,b_2)+\Delta u(\cdot,\hat x,b_1)\pa_nu(\cdot,y,b_2)\right. \\ 
	  &&~\left.-\pa_n\Delta u(\cdot,\hat x,b_1)u(\cdot,y,b_2)-\pa_nu(\cdot,\hat x,b_1)\Delta u(\cdot,y,b_2)\right)ds=0 
	\en
	for $b_1,b_2=k,ik$. In the following, we only consider the case $(b_1,b_2)=(k,k)$, the other three cases can be managed analogously. 
	
	Integration by parts over $\Om$ yields 
	\be\label{6.2}\nonumber
	  \int_{\pa\Om}&&\left(u^i(\cdot,\hat x,k)\pa_n\Delta u^i(\cdot,y,k)+\Delta u^i(\cdot,\hat x,k)\pa_nu^i(\cdot,y,k)\right. \\ 
	  &&~\left.-\pa_n\Delta u^i(\cdot,\hat x,k)u^i(\cdot,y,k)-\pa_nu^i(\cdot,\hat x,k)\Delta u^i(\cdot,y,k)\right)ds=0. 
	\en
	From Green's theorem, the radiation condition (\ref{1.2}) and Lemma \ref{lem3.1}, we obtain that 
	\be\label{6.3}\nonumber
	\int_{\pa\Om}&&\left(u^s(\cdot,\hat x,k)\pa_n\Delta u^s(\cdot,y,k)+\Delta u^s(\cdot,\hat x,k)\pa_nu^s(\cdot,y,k)\right. \\
	&&~\left.-\pa_n\Delta u^s(\cdot,\hat x,k)u^s(\cdot,y,k)-\pa_nu^s(\cdot,\hat x,k)\Delta u^s(\cdot,y,k)\right)ds=0. 
	\en
	Combining (\ref{6.1})-(\ref{6.3}), since $u=u^i+u^s$, it is derived that 
	\ben
	  \int_{\pa\Om}&&\left(u^s(\cdot,\hat x,k)\pa_n\Delta u^i(\cdot,y,k)+\Delta u^s(\cdot,\hat x,k)\pa_nu^i(\cdot,y,k)\right. \\
	  &&~\left.-\pa_n\Delta u^s(\cdot,\hat x,k)u^i(\cdot,y,k)-\pa_nu^s(\cdot,\hat x,k)\Delta u^i(\cdot,y,k)\right)ds \\
	  =-\int_{\pa\Om}&&\left(u^i(\cdot,\hat x,k)\pa_n\Delta u^s(\cdot,y,k)+\Delta u^i(\cdot,\hat x,k)\pa_nu^s(\cdot,y,k)\right. \\
	  &&~\left.-\pa_n\Delta u^i(\cdot,\hat x,k)u^s(\cdot,y,k)-\pa_nu^i(\cdot,\hat x,k)\Delta u^s(\cdot,y,k)\right)ds. 
	\enn
	Note that $u^s=1/2k^2(u^s_+-u^s_-)$, $\Delta u^s=1/2(u^s_++u^s_-)$ and $u^i(\cdot,a,k)$ satisfies the Helmholtz equation for $a=\hat x,y$. By the representation theorem, we have 
	\ben
	  \int_{\pa\Om}&&\left(u^s(\cdot,\hat x,k)\pa_n\Delta u^i(\cdot,y,k)+\Delta u^s(\cdot,\hat x,k)\pa_nu^i(\cdot,y,k)\right. \\
	  &&~\left.-\pa_n\Delta u^s(\cdot,\hat x,k)u^i(\cdot,y,k)-\pa_nu^s(\cdot,\hat x,k)\Delta u^i(\cdot,y,k)\right)ds \\
	  =\int_{\pa\Om}&&\left(u^s_-(\cdot,\hat x,k)\pa_nu^i(\cdot,y,k)-u^i(\cdot,y,k)\pa_nu^s_-(\cdot,\hat x,k)\right)ds=u^s_-(y,\hat x,k). 
	\enn
	Moreover, from the expression of the far-field pattern (\ref{3.5}), we deduce that 
	\ben
	  &&\quad-\int_{\pa\Om}\left(u^i(\cdot,\hat x,k)\pa_n\Delta u^s(\cdot,y,k)+\Delta u^i(\cdot,\hat x,k)\pa_nu^s(\cdot,y,k)\right. \\
	  &&\qquad\quad\quad\;\left.-\pa_n\Delta u^i(\cdot,\hat x,k)u^s(\cdot,y,k)-\pa_nu^i(\cdot,\hat x,k)\Delta u^s(\cdot,y,k)\right)ds \\
	  &&=\int_{\pa\Om}\left(u^s_-(\cdot,y,k)\pa_nu^i(\cdot,\hat x,k)-u^i(\cdot,\hat x,k)\pa_nu^s_-(\cdot,y,k)\right)ds \\ [2mm]
	  &&=\frac{2(2\pi)^{\frac{d-1}{2}}}{ie^{-i\frac{d-1}{4}\pi}k^{\frac{d-3}{2}}}u^s_{-,\infty}(-\hat x,y,k),  
	\enn
	which implies 
	\ben
	  u^s_-(y,\hat x,k)=\frac{2(2\pi)^{\frac{d-1}{2}}}{ie^{-i\frac{d-1}{4}\pi}k^{\frac{d-3}{2}}}u^s_{-,\infty}(-\hat x,y,k). 
	\enn
	The other three reciprocity relations can be obtained from other three choices of $(b_1,b_2)$, i.e., $(b_1,b_2)=(k,ik),(ik,k)$ and $(ik,ik)$. The proof is thus complete. 
\end{proof}

The next two theorems about other kinds reciprocity relations follow closely as Theorem \ref{thm6.3}, the detailed proof is omitted. 
\begin{theorem}\label{thm6.4}
	The reciprocity relations hold 
	 \ben
	&&\quad\left(
	\begin{array}{cc}
		e^{i\frac{d-1}{4}\pi}u^s_{+,\infty}(\hat x,\hat y,k) & u^s_{-,\infty}(\hat x,\hat y,k) \\
		u^s_{+,\infty}(\hat x,\hat y,ik) & u^s_{-,\infty}(\hat x,\hat y,ik)
	\end{array}
	\right) \\
	&&=
	\left(
	\begin{array}{cc}
		u^s_{-,\infty}(-\hat y,-\hat x,ik) & u^s_{-,\infty}(-\hat y,-\hat x,k) \\
		u^s_{+,\infty}(-\hat y,-\hat x,ik) & e^{i\frac{d-1}{4}\pi}u^s_{+,\infty}(-\hat y,-\hat x,k)
	\end{array}
	\right)
	\enn
	for biharmonic obstacle $\Om$ with $\mathcal{B}=\mathcal{B}_D$ and $\hat x,\hat y\in\Sp^{d-1}$.
\end{theorem}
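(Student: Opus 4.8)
The plan is to follow the proof of Theorem~\ref{thm6.3} almost verbatim, the only structural change being that both incident fields are now plane waves, so that after the same manipulation \emph{both} sides of the resulting identity are far--field patterns rather than near--field values. Write $u_j=u^i_j+u^s_j$ with $u^i_1=u^i(\cdot,\hat x,b_1)$, $u^i_2=u^i(\cdot,\hat y,b_2)$, $b_1,b_2\in\{k,ik\}$, and introduce the biharmonic Green concomitant $J(a,b):=a\,\pa_n\Delta b+\Delta a\,\pa_n b-b\,\pa_n\Delta a-\Delta b\,\pa_n a$, which is bilinear, antisymmetric ($J(b,a)=-J(a,b)$), and satisfies $\int_D\big(a(\Delta^2-k^4)b-b(\Delta^2-k^4)a\big)dx=\int_{\pa D}J(a,b)\,ds$ for bounded Lipschitz $D$. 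First I record three vanishing facts. (i) Since $\mathcal B_D(u_j)=(0,0)$ on $\pa\Om$, every term of $J(u_1,u_2)$ carries a factor $u_j$ or $\pa_n u_j$ vanishing there, hence $\int_{\pa\Om}J(u_1,u_2)\,ds=0$. (ii) Each $u^i(\cdot,a,b)$ is entire and satisfies $\Delta^2 u^i=k^4u^i$ (indeed $\Delta u^i=-k^2u^i$ if $b=k$ and $\Delta u^i=k^2u^i$ if $b=ik$), so the Green identity over $\Om$ gives $\int_{\pa\Om}J(u^i_1,u^i_2)\,ds=0$. (iii) The Green identity over $B_R\se\ov\Om$ applied to the scattered fields expresses $\int_{\pa\Om}J(u^s_1,u^s_2)\,ds$ through the corresponding integral over $\pa B_R$, which tends to $0$ as $R\to\infty$ by the argument of Lemma~\ref{lem3.1}: inserting the radiation condition (\ref{1.2}) for $w=u^s_j,\Delta u^s_j$ into $J(u^s_1,u^s_2)$ on $\pa B_R$ cancels the leading terms, and the leftover $o(r^{-(d-1)/2})$ products are absorbed by the $L^2(\pa B_R)$ bounds of Lemma~\ref{lem3.1}; hence $\int_{\pa\Om}J(u^s_1,u^s_2)\,ds=0$ as well.

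Combining (i)--(iii) with the bilinearity and antisymmetry of $J$ and $u=u^i+u^s$ gives the master identity $\int_{\pa\Om}J(u^i_1,u^s_2)\,ds=\int_{\pa\Om}J(u^i_2,u^s_1)\,ds$. To evaluate the two sides I use that the plane--wave incident field collapses $J$ to a Helmholtz--type two--term integrand: if $b_1=k$, then $\Delta u^i_1=-k^2u^i_1$ and a one--line regrouping gives $J(u^i_1,u^s_2)=u^i_1\,\pa_n u^s_{2,-}-u^s_{2,-}\,\pa_n u^i_1$, where $u^s_{2,-}:=\Delta u^s_2-k^2u^s_2$ is the radiating Helmholtz part of the scattered field; if $b_1=ik$, then $\Delta u^i_1=k^2u^i_1$ and $J(u^i_1,u^s_2)=u^i_1\,\pa_n u^s_{2,+}-u^s_{2,+}\,\pa_n u^i_1$ with $u^s_{2,+}:=\Delta u^s_2+k^2u^s_2$ the modified--Helmholtz part. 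Now $u^i(\cdot,\hat x,k)$ is, up to the replacement $\hat x\mapsto-\hat x$, exactly the exponential kernel appearing in (\ref{3.5}), while $u^i(\cdot,\hat x,ik)$ is, up to the same replacement, the kernel appearing in Definition~\ref{def3.14}; hence (\ref{3.5}) identifies $\int_{\pa\Om}J(u^i_1,u^s_2)\,ds$ with an explicit constant times $u^s_{-,\infty}(-\hat x,\hat y,b_2)$ when $b_1=k$, and Definition~\ref{def3.14} identifies it with an explicit constant times $u^s_{+,\infty}(-\hat x,\hat y,b_2)$ when $b_1=ik$; the same evaluation of the right side produces the analogous constant times $u^s_{-,\infty}(-\hat y,\hat x,b_1)$ or $u^s_{+,\infty}(-\hat y,\hat x,b_1)$. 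Running this through the four choices $(b_1,b_2)\in\{(k,k),(ik,ik),(k,ik),(ik,k)\}$ and relabelling $\hat x,\hat y\mapsto-\hat x,-\hat y$ where convenient yields $u^s_{-,\infty}(\hat x,\hat y,k)=u^s_{-,\infty}(-\hat y,-\hat x,k)$, $u^s_{+,\infty}(\hat x,\hat y,ik)=u^s_{+,\infty}(-\hat y,-\hat x,ik)$, and the two cross relations between $u^s_{-,\infty}(\cdot,\cdot,ik)$ and $u^s_{+,\infty}(\cdot,\cdot,k)$; writing the four identities as a $2\times2$ array gives the statement.

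I expect the only genuinely analytic step to be (iii), the vanishing of the $\pa B_R$--integral of the concomitant, and there the crucial feature --- as in Lemma~\ref{lem3.1} --- is that the radiation condition (\ref{1.2}) must be used simultaneously for $u^s$ and $\Delta u^s$ (controlling $u^s$ alone does not suffice); no new ideas beyond Lemma~\ref{lem3.1} are needed. Everything else is bookkeeping: keeping track that each use of (\ref{3.5}) or of Definition~\ref{def3.14} reflects the observation direction $\hat x\mapsto-\hat x$, and reconciling the normalisation $\tfrac{ie^{-i(d-1)\pi/4}k^{(d-3)/2}}{2(2\pi)^{(d-1)/2}}$ of the acoustic far--field formula with the normalisation $\tfrac{k^{(d-3)/2}}{2(2\pi)^{(d-1)/2}}$ of the modified--Helmholtz far--field formula --- the ratio of these constants is exactly what places the phase $e^{i(d-1)\pi/4}$ next to $u^s_{+,\infty}$ in the two matrix entries that connect $u^s_{+,\infty}$ with $u^s_{-,\infty}$, just as in Theorem~\ref{thm6.3}.
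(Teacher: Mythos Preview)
Your proposal is correct and follows precisely the route the paper intends: the paper itself states that Theorem~\ref{thm6.4} ``follows closely as Theorem~\ref{thm6.3}, the detailed proof is omitted,'' and your three vanishing facts (i)--(iii), the resulting master identity $\int_{\pa\Om}J(u^i_1,u^s_2)\,ds=\int_{\pa\Om}J(u^i_2,u^s_1)\,ds$, and the collapse of $J$ to the two--term Helmholtz/modified--Helmholtz expressions are exactly the ingredients used in the proof of Theorem~\ref{thm6.3}, merely with both incident fields now plane waves so that both sides are identified via the far--field formulas (\ref{3.5}) and Definition~\ref{def3.14}. Your identification of the phase factor $e^{i(d-1)\pi/4}$ as the ratio of the two normalising constants is also the right bookkeeping; nothing further is needed.
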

\begin{theorem}\label{thm6.5}
	For biharmonic obstacle $\Om$ with $\mathcal{B}=\mathcal{B}_D$, we have the symmetry relations 
	\ben
	\left(
	\begin{array}{cc}
		u^s_{+}(x,y,k) & u^s_{-}(x,y,k) \\
		u^s_{+}(x,y,ik) & u^s_{-}(x,y,ik)
	\end{array}
	\right)
	=
	\left(
	\begin{array}{cc}
		u^s_{-}(y,x,ik) & u^s_{-}(y,x,k) \\
		u^s_{+}(y,x,ik) & u^s_{+}(y,x,k)
	\end{array}
	\right), 
	\enn
	where $x,y\in\R^d\setminus\ov\Om$. 
\end{theorem}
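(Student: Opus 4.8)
The plan is to mimic the structure of the proof of Theorem \ref{thm6.3}, replacing point sources by point sources on both sides. Recall that $u^s(x,y,b)$ is the scattered field generated by the incident field $u^i(\cdot,y,b)=\Phi_b(\cdot,y)$, which itself satisfies the Helmholtz (resp. modified Helmholtz) equation away from $y$. For fixed $x,y\in\R^d\setminus\ov\Om$ and $b_1,b_2\in\{k,ik\}$, I would start from the identity
\ben
  \int_{\pa\Om}&&\left(u(\cdot,x,b_1)\pa_n\Delta u(\cdot,y,b_2)+\Delta u(\cdot,x,b_1)\pa_nu(\cdot,y,b_2)\right. \\
  &&~\left.-\pa_n\Delta u(\cdot,x,b_1)u(\cdot,y,b_2)-\pa_nu(\cdot,x,b_1)\Delta u(\cdot,y,b_2)\right)ds=0,
\enn
which holds exactly as in \eqref{6.1} because both total fields satisfy $\mathcal{B}_D=0$ on $\pa\Om$ (the bilinear boundary form vanishes when both Cauchy data $(u,\pa_nu)$ vanish). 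As in the proof of Theorem \ref{thm6.3}, the purely incident contribution vanishes by Green's theorem applied in $\Om$ (here one must note that $x,y\notin\ov\Om$, so $\Phi_{b_1}(\cdot,x)$ and $\Phi_{b_2}(\cdot,y)$ are smooth solutions of the respective Helmholtz-type equations in a neighbourhood of $\ov\Om$), and the purely scattered contribution vanishes by Green's theorem in $B_R\setminus\ov\Om$ together with the radiation condition \eqref{1.2} and Lemma \ref{lem3.1}. This leaves the mixed incident/scattered terms:
\ben
  \int_{\pa\Om}&&\left(u^s(\cdot,x,b_1)\pa_n\Delta u^i(\cdot,y,b_2)+\Delta u^s(\cdot,x,b_1)\pa_nu^i(\cdot,y,b_2)\right. \\
  &&~\left.-\pa_n\Delta u^s(\cdot,x,b_1)u^i(\cdot,y,b_2)-\pa_nu^s(\cdot,x,b_1)\Delta u^i(\cdot,y,b_2)\right)ds \\
  =-\int_{\pa\Om}&&\left(u^i(\cdot,x,b_1)\pa_n\Delta u^s(\cdot,y,b_2)+\Delta u^i(\cdot,x,b_1)\pa_nu^s(\cdot,y,b_2)\right. \\
  &&~\left.-\pa_n\Delta u^i(\cdot,x,b_1)u^s(\cdot,y,b_2)-\pa_nu^i(\cdot,x,b_1)\Delta u^s(\cdot,y,b_2)\right)ds.
\enn

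Now I would evaluate each side using the decomposition $u^s=\frac{1}{2b_2^2}(u^s_+-u^s_-)$, $\Delta u^s=\frac12(u^s_++u^s_-)$ (with $b_2^2$ either $k^2$ or $-k^2$ according as $b_2=k$ or $ik$), and the key observation that $u^i(\cdot,y,b_2)=\Phi_{b_2}(\cdot,y)$ itself is either a Helmholtz radiator (if $b_2=k$, so $(\Delta+k^2)\Phi_k(\cdot,y)=-\delta_y$) or a modified Helmholtz function (if $b_2=ik$, so $(\Delta-k^2)\Phi_{ik}(\cdot,y)=-\delta_y$). Thus the biharmonic Green-type bilinear form on the left-hand side collapses, just as in Theorem \ref{thm6.3}, to a single Helmholtz-type Green formula over $\pa\Om$ in which only the component of $u^s(\cdot,x,b_1)$ matching the operator annihilated by $u^i(\cdot,y,b_2)$ survives; applying Theorem \ref{thm3.3} (representation in $\Om$) this equals $u^s_{\mp}(y,x,b_1)$ (with the sign chosen by $b_2$). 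Symmetrically, the right-hand side, by the representation theorem in $\R^d\setminus\ov\Om$ (Theorem \ref{thm3.2}) applied to the surviving component of $u^s(\cdot,y,b_2)$, reproduces $u^s_{\mp}(x,y,b_2)$ evaluated at the point $x$. Equating the two and running through the four choices $(b_1,b_2)\in\{(k,k),(k,ik),(ik,k),(ik,ik)\}$ yields the four entries of the claimed matrix identity; e.g. $(b_1,b_2)=(k,k)$ gives $u^s_-(y,x,k)=u^s_-(x,y,k)$ in the $(1,2)$-entry, $(b_1,b_2)=(ik,ik)$ gives $u^s_+(y,x,ik)=u^s_+(x,y,ik)$, and the mixed choices give the off-diagonal $k\leftrightarrow ik$ swaps.

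The main obstacle is bookkeeping rather than conceptual: one must be careful about which of $u^s_\pm$ survives for each $(b_1,b_2)$, because the incident field $u^i(\cdot,y,b_2)$ annihilates $(\Delta-b_2^2)$ but the scattered field splits into the $(\Delta-k^2)$ and $(\Delta+k^2)$ parts, so the matching is $u^i(\cdot,y,k)\leftrightarrow u^s_-$ and $u^i(\cdot,y,ik)\leftrightarrow u^s_+$; this is exactly the crossing pattern visible in the statement. A second point to check is that the incident-only term genuinely vanishes: since $\Phi_{b_1}(\cdot,x)$ and $\Phi_{b_2}(\cdot,y)$ are smooth in $\ov\Om$ (as $x,y\notin\ov\Om$), Green's theorem in $\Om$ produces only a volume term $\int_\Om(\Phi_{b_1}(\Delta^2-b_2^4)\Phi_{b_2}-\Phi_{b_2}(\Delta^2-b_1^4)\Phi_{b_1})\,dx$, which need not vanish unless $b_1^4=b_2^4$ — but here $b_1^4=b_2^4=k^4$ always, so it does vanish, as in \eqref{6.2}. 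Once these matchings are pinned down, the symmetry relations follow by assembling the four scalar identities; no new estimates beyond those already established in Sections \ref{sec3} are needed, so, as the authors indicate, the detailed computation is routine and may be omitted.
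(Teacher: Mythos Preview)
Your proposal is correct and follows exactly the approach the paper intends: the authors state that Theorem~\ref{thm6.5} ``follow[s] closely as Theorem~\ref{thm6.3}, the detailed proof is omitted,'' and your outline reproduces precisely that structure (boundary identity from $\mathcal{B}_D$, vanishing of the pure incident and pure scattered terms, collapse of the mixed terms to Helmholtz-type representation formulas, and bookkeeping over the four choices of $(b_1,b_2)$). One small correction: the representation formulas you invoke on both sides are the standard \emph{exterior} Green representations for radiating Helmholtz/modified Helmholtz solutions (the same ones used implicitly in the proof of Theorem~\ref{thm6.3}), not Theorems~\ref{thm3.2} or~\ref{thm3.3}, which are the biharmonic versions; this is only a labeling slip and does not affect the argument.
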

\begin{remark}\label{remark6.6}
	It can be easily seen that Theorems {\rm \ref{thm6.3}-\ref{thm6.5}} also hold for $\mathcal{B}=\mathcal{B}_N$, $\mathcal{B}(u)=(\pa_nu,\pa_n\Delta u),(\Delta u,\pa_n\Delta u)$. 
\end{remark}

We now are at the position to establish uniqueness results for the inverse biharmonic obstacle scattering problems in the Dirichlet case $\mathcal{B}=\mathcal{B}_D$. 
\begin{theorem}\label{thm6.7}
	Suppose $\Om$ and $\wid\Om$ are two biharmonic obstacles with $\mathcal{B}=\mathcal{B}_D$. If, for all $\hat x,\hat y\in\Sp^{d-1}$, one of the following holds{\rm :} \\
	{\rm (i)}$(u^s_{+,\infty}(\hat x,\hat y,k),u^s_{+,\infty}(\hat x,\hat y,ik))=(\wid u^s_{+,\infty}(\hat x,\hat y,k),\wid u^s_{+,\infty}(\hat x,\hat y,ik))${\rm ;} \\
	{\rm (ii)}$(u^s_{-,\infty}(\hat x,\hat y,k),u^s_{-,\infty}(\hat x,\hat y,ik))=(\wid u^s_{-,\infty}(\hat x,\hat y,k),\wid u^s_{-,\infty}(\hat x,\hat y,ik))${\rm ;} \\
	then $\Om=\wid\Om$. 
\end{theorem}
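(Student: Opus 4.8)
The plan is to adapt the classical Isakov/Kirsch–Kress uniqueness argument for acoustic obstacle scattering to the biharmonic setting, using the reciprocity relations of Theorem \ref{thm6.3} to convert far-field data into point-source data, and then using the orthogonality/mixed-reciprocity machinery together with the uniqueness theorems of Section \ref{sec3} (especially Theorem \ref{thm3.9} and Theorem \ref{thm3.15}). First I would observe that by Rellich's lemma for the Helmholtz component $u^s_-$ and the analogous uniqueness statement (Theorem \ref{thm3.13}) for the modified-Helmholtz component $u^s_+$, the equality of far-field patterns in either case (i) or (ii) — combined with the reciprocity relations of Theorems \ref{thm6.3}--\ref{thm6.4} that interchange the roles of $u^s_{+,\infty}$ and $u^s_{-,\infty}$ at the two wave numbers $k$ and $ik$ — actually forces $u^s(\cdot,\hat y,k)=\wid u^s(\cdot,\hat y,k)$ in the unbounded component $G$ of $\R^d\setminus(\ov\Om\cup\wid{\ov\Om})$. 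Indeed the point of including $u^s_{+,\infty}$ in the far-field pattern (see Remark \ref{remark3.16}) is precisely that the reciprocity relations let one recover the missing component: knowing $u^s_{+,\infty}(\hat x,\hat y,k)$ for all $\hat x,\hat y$ yields, via Theorem \ref{thm6.3}, the scattered field $u^s_-(y,\hat x,ik)$ for $y$ in the exterior, hence by density and Theorem \ref{thm3.15} the full field $u^s$; a symmetric statement holds for case (ii).

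Next I would run the standard domain-decomposition contradiction. Assume $\Om\neq\wid\Om$; then without loss of generality there is a point $x^*\in\pa\Om$ with $x^*\notin\ov{\wid\Om}$, and one can choose a sequence $x_j=x^*+\frac{1}{j}n(x^*)$ lying in $G$ and approaching $x^*$. Consider the point-source incident fields $u^i(\cdot,x_j,b)=\Phi_b(\cdot,x_j)$ for $b=k,ik$ and the corresponding biharmonic incident wave built from them; by the mixed reciprocity relation (Theorem \ref{thm6.5}, and its far-field counterpart which follows from Theorems \ref{thm6.3}--\ref{thm6.4}) together with the already-established equality of far-field data, the scattered fields $u^s(\cdot,x_j,b)$ and $\wid u^s(\cdot,x_j,b)$ coincide on $G$. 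On the one hand, since the $x_j$ stay in the interior of $\wid\Om$'s exterior and converge to $x^*\notin\ov{\wid\Om}$, the fields $\wid u^s(x_j,x_j,b)$ stay bounded as $j\to\infty$ (the scattered field for $\wid\Om$ depends continuously on the source location, which remains in a compact subset of $\R^d\setminus\wid{\ov\Om}$). On the other hand, using the Dirichlet boundary condition $\mathcal B_D(u)=(u,\pa_n u)=(0,0)$ on $\pa\Om$, the total field $u(x_j,x_j,b)=u^i(x_j,x_j,b)+u^s(x_j,x_j,b)$ must vanish (or its trace must) at $x^*$, which forces $u^s(x_j,x_j,b)=-u^i(x_j,x_j,b)=-\Phi_b(x_j,x_j)$, and this blows up (logarithmically or like $|x_j-x_j|^{-1}$, i.e. like $j^{0}$ in $d=2$ or $j$ in $d=3$ after the regularization $\Phi_b(x_j,x_j+\frac1j n)$) — more precisely one compares $u^s(x_j,z_j,b)$ with $z_j=x^*-\frac1j n(x^*)$ and exploits the singularity of $\Phi_b$ at the diagonal, obtaining a contradiction with the boundedness established via $\wid\Om$. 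The biharmonic nature introduces the extra wave number $ik$ and the second boundary datum $\pa_n u=0$, but both components $\Phi_k$ and $\Phi_{ik}$ have the same leading singularity, so the classical singularity estimate carries over, and one may use whichever of the two components gives the cleaner blow-up.

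The main obstacle, and the step I would spend the most care on, is the first one: showing that equality of \emph{only} the $u^s_{+,\infty}$ data (case (i)), or \emph{only} the $u^s_{-,\infty}$ data (case (ii)), at the two wave numbers $k$ and $ik$ suffices to pin down the entire biharmonic scattered field in $G$. This is where Remark \ref{remark3.16}'s warning bites: neither $u^s_{+,\infty}$ nor $u^s_{-,\infty}$ alone determines $u^s$ for a single incident wave. The resolution is the ``rotation'' built into the reciprocity relations of Theorem \ref{thm6.3}: the $2\times2$ matrix identity there couples $u^s_{\pm}$ evaluated at a point with $u^s_{\mp,\infty}$ evaluated in the far field, and crucially it swaps the wave number between the two sides. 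Thus the hypothesis of case (i) — equality of $u^s_{+,\infty}(\cdot,\cdot,k)$ and $u^s_{+,\infty}(\cdot,\cdot,ik)$ — translates, through Theorem \ref{thm6.3}, into equality of $u^s_-(y,\hat x,ik)$ and $u^s_-(y,\hat x,k)$ for $y$ in the exterior, i.e. the near-field $u^s_-$ data for point-source incidence at both wave numbers; and $u^s_-$ being a radiating Helmholtz solution, density of point sources plus Rellich then gives $u^s_- \equiv \wid u^s_-$ and $u^s_+\equiv\wid u^s_+$ in $G$, hence $u^s\equiv\wid u^s$ there. I would write this reduction out carefully, being explicit about which index pair $(b_1,b_2)$ of Theorem \ref{thm6.3} is invoked, and then the domain-decomposition/singularity argument of the previous paragraph completes the proof. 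Case (ii) is handled symmetrically, using the rows of the matrix in Theorem \ref{thm6.3} that produce $u^s_{+,\infty}$.
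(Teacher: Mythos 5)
Your outline follows essentially the same route as the paper's proof: Rellich-type uniqueness (Theorems \ref{thm3.13} and \ref{thm3.15}) to pass between far fields and near fields, the reciprocity matrix of Theorem \ref{thm6.3} to convert plane-wave data into \emph{complete} point-source far-field data at the other wave number, and the Kirsch--Kress singularity argument to conclude. Two steps need repair, however. First, at the crux reduction your written chain does not parse: the case (i) hypothesis concerns far fields of the \emph{plane-wave} problems, so it cannot be fed directly into the right-hand side of Theorem \ref{thm6.3}, whose entries $u^s_{\pm,\infty}(-\hat x,y,\cdot)$ are far fields of \emph{point-source} problems. The correct order is to apply Theorem \ref{thm3.13} first, obtaining $u^s_+(x,\hat y,k)=\wid u^s_+(x,\hat y,k)$ and $u^s_+(x,\hat y,ik)=\wid u^s_+(x,\hat y,ik)$ for $x$ in the unbounded component $G$, and then read the \emph{first column} of Theorem \ref{thm6.3} to get equality of \emph{both} components $u^s_{+,\infty}(-\hat y,x,ik)$ and $u^s_{-,\infty}(-\hat y,x,ik)$ of the point-source far field at the single wave number $ik$, after which Theorem \ref{thm3.15} applies; as literally stated your chain recovers only the $u^s_-$ component of the point-source data, which, by the very observation of Remark \ref{remark3.16} that you invoke, would not determine $u^s$. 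Second, the blow-up should not be run pointwise at the diagonal: $x_j\notin\pa\Om$, so the Dirichlet condition gives no information at $x_j$, and $\Phi_b(x_j,x_j)$ is not defined. The paper instead works in the trace norm: Theorem \ref{thm5.5} bounds $\|\wid u^s(\cdot,x_j,ik)\|_{H^{3/2}(B\cap\pa\Om)}$ uniformly because the sources $x_j$ stay at positive distance from $\wid\Om$, while the boundary condition on $\pa\Om$ together with $u^s=\wid u^s$ on $G$ forces $\wid u^s(\cdot,x_j,ik)=-\Phi_{ik}(\cdot,x_j)$ on $B\cap\pa\Om$, whose $H^{3/2}$ norm blows up as $x_j\to x^\ast\in\pa\Om$. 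With these two repairs your proposal coincides with the paper's argument.
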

\begin{proof}
	The proof is standard as in \cite{AR93}. We only consider the situation that (i) is satisfied. Denote by $G$ the unbounded connected part of $\R^d\setminus\overline{(\Om\cup\wid\Om)}$. By Theorem \ref{thm3.13}, we know that $(u^s_{+}(x,\hat y,k),u^s_{+}(x,\hat y,ik))=(\wid u^s_{+}(x,\hat y,k),\wid u^s_{+}(x,\hat y,ik))$ for all $x\in G$ and $\hat y\in\Sp^{d-1}$. Due to the reciprocity relations Theorem \ref{thm6.3}, we see that $(u^s_{+,\infty}(\hat y,x,ik),u^s_{-,\infty}(\hat y,x,ik))=(\wid u^s_{+,\infty}(\hat y,x,ik),\wid u^s_{-,\infty}(\hat y,x,ik))$ with $\hat y\in\Sp^{d-1}$ and $x\in G$. Furthermore, by Theorem \ref{thm3.15}, it immediately follows that $u^s(x,y,ik)=\wid u^s(x,y,ik)$ for all $x,y\in G$. 
	
	Now suppose $\Om\neq\wid\Om$. Then, without loss of generality, there exists a $x_0\in\pa G$ such that $x_0\in\pa\Om$ and $x_0\notin\ov{\wid\Om}$. Choose $\delta>0$ such that the sequence 
	\ben
	  x_j:=x_0+\frac{\delta}{j}n(x_0)\in B\cap G,~~~j\in\N, 
	\enn
	where $B$ is a small ball centered at $x_0$ satisfying $B\cap\ov{\wid\Om}=\emptyset$. We know that $u^s(x,x_j,ik)=\wid u^s(x,x_j,ik)$ for $x\in G$ and $j\in\N$. From the positive distance between $x_j$ and $\wid\Om$, since $k^4$ is not a biharmonic Dirichlet eigenvalue in $\wid\Om$, by Theorem \ref{thm5.5} it yields that $\wid u^s(x,x_j,ik)$ is uniformly bounded in $H^2(B\cap\Om)$ for $j\in\N$, which further indicates by the trace theorem that $\|\wid u^s(x,x_j,ik)\|_{H^{3/2}(B\cap\pa\Om)}\leq C$ for all $j$ and some positive constant $C$. On the other hand, since $u^s(x,x_j,ik)=\wid u^s(x,x_j,ik)$ for $x\in G$, from the boundary condition on $\pa\Om$ we have that 
	\ben
	  \|\wid u^s(x,x_j,ik)\|_{H^{3/2}(B\cap\pa\Om)}=\|\Phi_{ik}(x,x_j)\|_{H^{3/2}(B\cap\pa\Om)}\rightarrow\infty,~~~n\rightarrow\infty, 
	\enn
	which is a contradiction. Therefore, $\Om=\wid\Om$. 
\end{proof}
\begin{theorem}\label{thm6.8}
	Let $\Om$ and $\wid\Om$ be two biharmonic obstacles with $\mathcal{B}=\mathcal{B}_D$ and $\ov{\Om\cup\wid\Om}\subset B_R$ for some $R>0$. If, for all $x,y\in\pa B_R$, one of the following is satisfied{\rm :} \\
	{\rm (i)}$(u^s_{+}(x,y,k),u^s_{+}(x,y,ik))=(\wid u^s_{+}(x,y,k),\wid u^s_{+}(x,y,ik))${\rm ;} \\
	{\rm (ii)}$(u^s_{-}(x,y,k),u^s_{-}(x,y,ik))=(\wid u^s_{-}(x,y,k),\wid u^s_{-}(x,y,ik))${\rm ;} \\
	then $\Om=\wid\Om$. 
\end{theorem}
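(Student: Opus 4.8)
The plan is to reduce the near-field measurement hypothesis on $\pa B_R$ to the far-field hypothesis of Theorem~\ref{thm6.7}, so that the already-established uniqueness result can be invoked directly. The key observation is that, for a fixed source point $y\in\pa B_R$, the scattered field $u^s(\cdot,y,b)$ (and hence each of its pieces $u^s_\pm(\cdot,y,b)=\Delta u^s\pm b^2 u^s$) solves a homogeneous biharmonic-type equation in the unbounded region exterior to $B_R$, because $\ov{\Om\cup\wid\Om}\subset B_R$. More precisely, $u^s_-(\cdot,y,b)$ satisfies the Helmholtz equation $\Delta v+b^2 v=0$ in $\R^d\setminus\ov{B_R}$ for $b=k$, and the modified Helmholtz equation for $b=ik$, while $u^s_+(\cdot,y,b)$ satisfies the modified Helmholtz equation in $\R^d\setminus\ov{B_R}$ in both cases; all of them satisfy the appropriate radiation conditions by construction.

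First I would fix $y\in\pa B_R$ and assume, say, that hypothesis (i) holds, i.e.\ $u^s_+(x,y,b)=\wid u^s_+(x,y,b)$ for all $x\in\pa B_R$ and $b=k,ik$. Both $u^s_+(\cdot,y,b)$ and $\wid u^s_+(\cdot,y,b)$ are radiating solutions of the modified Helmholtz equation outside $B_R$ agreeing on $\pa B_R$; by the uniqueness for the exterior Dirichlet problem for the modified Helmholtz equation (equivalently, by the series expansion in Lemma~\ref{lem3.12} together with Parseval, since the traces on $\pa B_R$ determine all the coefficients $a_m$ or $a_l^m$), they coincide in all of $\R^d\setminus\ov{B_R}$. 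The same argument handles hypothesis (ii): for $b=ik$ use Lemma~\ref{lem3.12} as above, and for $b=k$, $u^s_-(\cdot,y,k)$ is a radiating solution of the Helmholtz equation outside $B_R$, and the equality of the Dirichlet traces on $\pa B_R$ forces the Fourier/spherical-harmonic coefficients of the Hankel expansion (Theorem~\ref{thm3.5}) to agree, whence the two fields coincide in $\R^d\setminus\ov{B_R}$. In either case, letting $|x|\to\infty$ and reading off the leading asymptotics (the definitions in \eqref{3.4}--\eqref{3.5} and in Definition~\ref{def3.14}), we conclude that the corresponding far-field patterns coincide: $u^s_{+,\infty}(\hat x,y,b)=\wid u^s_{+,\infty}(\hat x,y,b)$ (respectively the $-$ versions) for all $\hat x\in\Sp^{d-1}$, $y\in\pa B_R$, $b=k,ik$.

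Next I would pass from point-source far-field data on $\pa B_R$ to plane-wave far-field data on $\Sp^{d-1}\times\Sp^{d-1}$ by means of the reciprocity relations of Theorem~\ref{thm6.3}: those identities express $u^s_{\mp,\infty}(-\hat x,y,b)$ (for a point source at $y$) in terms of $u^s_\pm(y,\hat x,b)$ (for a plane wave in direction $\hat x$), up to explicit nonvanishing constants. Hence the agreement of the point-source far-field patterns for all $y\in\pa B_R$ gives the agreement of $u^s_\pm(y,\hat x,b)=\wid u^s_\pm(y,\hat x,b)$ for all $y\in\pa B_R$ and $\hat x\in\Sp^{d-1}$; since the plane-wave scattered fields $u^s_\pm(\cdot,\hat x,b)$ are again radiating solutions of (modified) Helmholtz equations outside $B_R$, the same trace-uniqueness argument as before propagates this equality to all of $\R^d\setminus\ov{B_R}$, and then the far-field asymptotics yield equality of the plane-wave far-field patterns $u^s_{\pm,\infty}(\hat x,\hat y,b)=\wid u^s_{\pm,\infty}(\hat x,\hat y,b)$ for all $\hat x,\hat y\in\Sp^{d-1}$, $b=k,ik$. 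Under hypothesis (i) this furnishes exactly condition (i) of Theorem~\ref{thm6.7} (the pair $u^s_{+,\infty}(\hat x,\hat y,k),u^s_{+,\infty}(\hat x,\hat y,ik)$); under hypothesis (ii) it furnishes condition (ii). Invoking Theorem~\ref{thm6.7} gives $\Om=\wid\Om$, which completes the proof.

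The main obstacle I anticipate is the careful bookkeeping in the first step: one must verify that the Dirichlet trace on $\pa B_R$ genuinely determines a radiating solution of the (modified) Helmholtz equation in the exterior of $B_R$ uniquely. For the modified Helmholtz case this is immediate from Lemma~\ref{lem3.12} and the nonvanishing of $K_m(kR)$, $k_l(kR)$; for the Helmholtz case one needs $k^2R^{\,-2}$ not to be a Dirichlet eigenvalue of $-\Delta$ on $B_R$, or alternatively one simply uses the Sommerfeld radiation condition together with Rellich's lemma to rule out interior resonances — a standard point that should be stated explicitly. Everything else is a concatenation of results already proved: trace/interior-regularity of exterior problems, the far-field asymptotics, and the reciprocity relations of Theorem~\ref{thm6.3}, all funnelled into Theorem~\ref{thm6.7}.
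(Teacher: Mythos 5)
Your overall strategy (propagate the data from $\pa B_R$ to the exterior of $B_R$, pass through a reciprocity relation, and funnel everything into an already-proved uniqueness result) is close in spirit to the paper's proof, which propagates the $u^s_-$ data outward, applies the point-source symmetry relation of Theorem \ref{thm6.5}, invokes the exterior Navier-type uniqueness from Theorem \ref{thm3.9}, and then runs the blow-up argument with point sources $x_j\to x_0\in\pa\Om$ directly. However, your final reduction to Theorem \ref{thm6.7} does not close, because of a bookkeeping error in the reciprocity step. Theorem \ref{thm6.3} pairs $u^s_{+,\infty}(-\hat x,y,ik)$ with $u^s_+(y,\hat x,ik)$ and $u^s_{+,\infty}(-\hat x,y,k)$ with $u^s_-(y,\hat x,ik)$; so under hypothesis (i) what you actually recover is $u^s_\pm(y,\hat x,ik)=\wid u^s_\pm(y,\hat x,ik)$ for $y\in\pa B_R$ --- both parts, but only for $b=ik$ --- and hence, after propagation, agreement of the pair $\bigl(u^s_{+,\infty}(\hat x,\hat y,ik),u^s_{-,\infty}(\hat x,\hat y,ik)\bigr)$, not of the pair $\bigl(u^s_{+,\infty}(\hat x,\hat y,k),u^s_{+,\infty}(\hat x,\hat y,ik)\bigr)$ demanded by condition (i) of Theorem \ref{thm6.7}. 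Likewise under hypothesis (ii) you obtain both parts at $b=k$ only, not $u^s_{-,\infty}$ at both $b=k$ and $b=ik$. The sentence ``this furnishes exactly condition (i)/(ii) of Theorem \ref{thm6.7}'' is therefore false as written, and the theorem you want to cite does not apply to the data you have assembled.

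The gap is repairable: one further application of the far-field reciprocity of Theorem \ref{thm6.4} (e.g.\ $e^{i\frac{d-1}{4}\pi}u^s_{+,\infty}(\hat x,\hat y,k)=u^s_{-,\infty}(-\hat y,-\hat x,ik)$) converts full far-field data at a single $b$ into the mixed-$b$ data that Theorem \ref{thm6.7} requires; alternatively, follow the paper and bypass Theorem \ref{thm6.7} altogether by using Theorem \ref{thm6.5} to transfer the point-source data, Theorem \ref{thm3.9} to propagate the resulting Navier data off $\pa B_R$, and then the contradiction argument from the proof of Theorem \ref{thm6.7}. Two smaller points: (a) $u^s_-=\Delta u^s-k^2u^s$ satisfies the Helmholtz equation $(\Delta+k^2)u^s_-=0$ for both $b=k$ and $b=ik$, since $b$ only changes the incident field and not the factorization of $\Delta^2-k^4$; your statement that $u^s_-(\cdot,y,ik)$ solves the modified Helmholtz equation is wrong, though harmless for the trace-uniqueness step. (b) The worry about $k^2R^{-2}$ being a Dirichlet eigenvalue is unnecessary: uniqueness of the exterior Dirichlet problem for radiating Helmholtz solutions holds unconditionally by Rellich's lemma, as you yourself note.
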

\begin{proof}
	Here, for simplicity, we only prove case (ii). Also denote by $G$ the unbounded connected part of $\R^d\setminus\overline{(\Om\cup\wid\Om)}$. Due to the uniqueness of the exterior Dirichlet Helmholtz equation, we deduce from (ii) that  $u^s_-(x,y,b)=\wid u^s_-(x,y,b)$ for all $x\in\R^2\setminus B_R$, $y\in\pa B_R$ and $b=k,ik$. Then by analyticity we know $u^s_-(x,y,b)=\wid u^s_-(x,y,b)$ for all $x\in G$, $y\in\pa B_R$ and $b=k,ik$. Further, the symmetry relations in Theorem \ref{thm6.5} yield that $u^s_\pm(y,x,k)=\wid u^s_\pm(y,x,k)$ for all $y\in\pa B_R$ and $x\in G$, which implies $(u^s(y,x,k),\Delta u^s(y,x,k))=(\wid u^s(y,x,k),\Delta\wid u^s(y,x,k))$ with $y\in\pa B_R$ and $x\in G$. From the uniqueness of problem \eqref{1.1} with $\mathcal{B}=\mathcal{B}_N$ in Theorem \ref{thm3.9}, it is obtained that $u^s(y,x,k)=\wid u^s(y,x,k)$ for $y\in\R^d\setminus B_R$ and $x\in G$. By analyticity, we further deduce $u^s(y,x,k)=\wid u^s(y,x,k)$ for $x,y\in G$ . Finally, following the same line as the proof of Theorem \ref{thm6.7} we can conclude that $\Om=\wid\Om$. 
\end{proof}
\begin{remark}\label{remark6.9}
	Though the part $u^s_+$ decays exponentially at infinity and thus hard to measure at practice, it is sufficient for the uniqueness results to hold that only the condition {\rm (ii)} in Theorems {\rm \ref{thm6.7}} and {\rm \ref{thm6.8}} is fullfilled, while $u^s_-$ and $u^s_{-,\infty}$ behaves like the usual scattered fields and far-field patterns in the acoustic scattering. Further, since the proofs of Theorem {\rm \ref{thm6.7} and \ref{thm6.8}} rely heavily on the reciprocity relations, it can be verified that similar results also hold for $\mathcal{B}(u)=(\Delta u,\pa_n\Delta u)$. 
\end{remark}

\section*{Acknowledgements}
This work was supported by the NNSF of China with grant 12122114.

\end{document}